\documentclass[9pt]{article}

%%%%%%%%%%%%%%%%%%%%%%
%%% definition proof environment %%%%
\makeatletter
\DeclareRobustCommand{\qed}{%
  \ifmmode % if math mode, assume display: omit penalty etc.
  \else \leavevmode\unskip\penalty9999 \hbox{}\nobreak\hfill
  \fi
  \quad\hbox{\qedsymbol}}
\newcommand{\openbox}{\leavevmode
  \hbox to.77778em{%
  \hfil\vrule
  \vbox to.675em{\hrule width.6em\vfil\hrule}%
  \vrule\hfil}}
\newcommand{\qedsymbol}{\openbox}
\newenvironment{proof}[1][\proofname]{\par
  \normalfont
  \topsep6\p@\@plus6\p@ \trivlist
  \item[\hskip\labelsep\itshape
    #1.]\ignorespaces
}{%
  \qed\endtrivlist
}
\newcommand{\proofname}{Proof}
\makeatother

%%%%%%%%%%%%%%%%%%%%%%%%%%%%%

\usepackage[utf8]{inputenc}

\usepackage{bm}

\usepackage{color}
\usepackage{latexsym}
\usepackage{dsfont}
\usepackage{amssymb}
\usepackage{comment}
\usepackage{graphicx}
\usepackage{amsmath,amsfonts,amssymb,theorem,euscript,array,enumerate,amsfonts,mathrsfs}
\usepackage{hyperref}
\usepackage{appendix}
\usepackage[T1]{fontenc}
\usepackage{babel}
\numberwithin{equation}{section}
\usepackage{bbm}
\usepackage{subfigure}
\usepackage{color}
%\usepackage[
%  backend=biber,
%  maxnames=99, % affiche tous les noms
%  minnames=1   % à partir d'un seul auteur, affiche tous
%]{biblatex}
%\usepackage{mathabx}
\usepackage{stmaryrd}

%\renewcommand{\finalnamedelim}{\addspace and\space} % "and" normal sans gras
%\DeclareFieldFormat{volume}{volume~#1}

%\renewbibmacro{in:}{%
%  \ifentrytype{article}{}% Ne rien mettre pour les articles
%  {\printtext{\bibstring{in}\intitlepunct}}% Sinon, garder "in"
%}

%\DeclareFieldFormat{pages}{pages~#1} % Ajoute "pages" sans gras

\usepackage[algo2e,ruled,vlined]{algorithm2e} 
 \usepackage{algorithm}
 \usepackage{algorithmicx}
\usepackage{algpseudocode}
\usepackage{ marvosym }
\usepackage{hyperref}
\usepackage{bm}
\usepackage{xcolor, soul}

\usepackage{mathtools}
\mathtoolsset{showonlyrefs}

\usepackage{comment}

\usepackage{tikz}
\usetikzlibrary{fit,matrix,chains,positioning,decorations.pathreplacing,arrows}

\usepackage{mathtools}
%\mathtoolsset{showonlyrefs}

%\usepackage[a4paper,margin=10in,heightrounded]{geometry}
\usepackage{geometry}
\geometry{hmargin=1.48cm, vmargin=1.48cm}

\usepackage{algpseudocode}
\usepackage{algorithm}

\usepackage[normalem]{ulem}

%\addtolength{\oddsidemargin}{-0.1 \textwidth}
%\addtolength{\textwidth}{0.2 \textwidth}
%\addtolength{\topmargin}{-0.1 \textheight}
%\addtolength{\textheight}{0.2 \textheight}

%\geometry{%letterpaper, % a4paperleft=   20 mm,right=  20 mm,top=    45 mm,bottom= 45 mm,}

\def \b1{\bf{1}}

\def \N{\mathbb{N}}
\def \R{\mathbb{R}}

\def \E{\mathbb{E}}
\def \F{\mathbb{F}}

\def \P{\mathbb{P}}

\def \S{\mathbb{S}}
\def \W{\mathbb{W}}
\def \H{\mathbb{H}}

\def \balpha{\boldsymbol{\alpha}}

\def \d{\mathrm{d}}

\def\esssup_#1{\underset{#1}{\mathrm{ess\,sup\, }}}

\def\argmin_#1{\underset{#1}{\mathrm{argmin\, }}}
\def\argmax_#1{\underset{#1}{\mathrm{argmax\, }}}

\def\dm#1{\frac{\delta}{\delta m}}

\def \Ac{{\cal A}}
\def \Bc{{\cal B}}
\def \Cc{{\cal C}}

\def \Fc{{\cal F}}

\def \Hc{{\cal H}}
\def \Ic{{\cal I}}
\def \Kc{{\cal K}}
\def \Lc{{\cal L}}
\def \Pc{{\cal P}}

\def \Sc{{\cal S}}

\def \Vc{{\cal V}}

\def\bX{{\bf X}}
\def\bY{{\bf Y}}
\def\bZ{{\bf Z}}

\def \d{\mathrm{d}}

\def\beqs{\begin{eqnarray*}}
\def\enqs{\end{eqnarray*}}
\def\beq{\begin{eqnarray}}
\def\enq{\end{eqnarray}}

%\addtolength{\oddsidemargin}{0.0 \textwidth}\addtolength{\textwidth}{-0.055 \textwidth}\addtolength{\topmargin}{-0.08 \textheight}\addtolength{\textheight}{0.07 \textheight}

\newcommand{\red}[1]{\textcolor{red}{#1}}
\newcommand{\bl}[1]{\textcolor{blue}{#1}}

\def\red#1{{\color{red}#1}}

\newtheorem{Theorem}{Theorem}[section] % compteur maître
\newtheorem{Definition}[Theorem]{Definition} % partage le compteur de Theorem
\newtheorem{Proposition}[Theorem]{Proposition}
\newtheorem{Assumption}[Theorem]{Assumption}
\newtheorem{Lemma}[Theorem]{Lemma}

\newtheorem{Remark}[Theorem]{Remark}

\title{Optimal  Control of  Heterogeneous  Mean-Field Stochastic Differential Equations with Common Noise and Applications}

\author{Filippo de Feo \footnote{Institut für Mathematik, Technische Universität Berlin, Berlin, Germany, Email: defeo@math.tu-berlin.de. This author acknowledges funding by the Deutsche Forschungsgemeinschaft (DFG, German Research Foundation) – CRC/TRR 388 "Rough Analysis, Stochastic Dynamics and Related Fields" – Project ID 516748464, by INdAM (Instituto Nazionale di Alta Matematica F. Severi) - GNAMPA (Gruppo Nazionale per l'Analisi Matematica, la Probabilità e le loro Applicazioni)} 
\and 
Samy Mekkaoui \footnote{Ecole Polytechnique, CMAP,  samy.mekkaoui at polytechnique.edu. This author is supported by the S-G Chair "Risques Financiers", and the "Deep Finance and Statistics" Qube-RT Chair.}}

\date{}

\begin{document}

\maketitle
\begin{abstract}
 We initiate the study of optimal control problems of heterogeneous mean-field stochastic differential equations with common noise. We formulate the problem within a linear–quadratic framework, a particularly important class in control theory, typically renowned for its analytical tractability and broad range of applications. We derive  a novel system of backward stochastic Riccati equations  on infinite-dimensional Hilbert spaces. As this system is not covered by standard theory, we establish existence and uniqueness of solutions. We explicitly characterize the optimal control in terms of the solution of this system. We apply these results to solve two problems arising in mathematical finance: optimal trading with heterogeneous market participants and systemic risk in networks of heterogeneous banks.
\end{abstract}

\noindent {\bf MSC Classification}: 49N10; 93E20; 60K35

\noindent {\bf Key words}: Mean-field SDE; common noise; heterogeneous mean-field interactions; linear-quadratic optimal control; Hilbert-space-valued Riccati BSDE;  optimal trading; systemic risk.

%\red{FOR SAMY: CAN WE HAVE THE SAME ENUMERATION FOR PROPOSITIONS, THEOREMS, REMARKS ETC (AT THE MOMENT WE THEY HAVE DIFFERENT ENNUMERATIONS)?}\bl{Samy : Updated.}

\section{Introduction}
\paragraph{Literature: standard mean-field SDEs, control,  and games.}
Understanding large populations and complex systems is a central challenge in mathematical modeling, with applications ranging from financial and economic systems to social networks, electrical grids, lightning networks, transport and mobility systems, communication infrastructures, biological and ecological populations, large-scale engineered networks, and many others. These systems typically evolve under multiple sources of uncertainty— idiosyncratic and systemic (common noise)—and the behavior of each agent often depends on the aggregate state of the population. This naturally leads to McKean–Vlasov dynamics and to the frameworks of mean-field games (MFG) and mean-field control (MFC), whose analysis is mathematically challenging due to the intrinsic coupling through the law of the state. In particular, when a common noise is present, the mean-field is no longer deterministic but becomes a random conditional distribution, from which substantial additional mathematical challenges typically arise. For a thorough introduction, we refer to \cite{lasry2007mean,bensoussan2013mean,carmona_probabilistic_2018,carmona_probabilistic_2018-1}. We also refer, e.g., to \cite{cosso2025mfos,talbi2023dynamic,talbi2023obstacle,talbi2024finite} for the closely related area of mean-field optimal stopping.

We recall that, among these models, a very popular class is the linear-quadratic class, renowned for its analytical tractability and broad spectrum of applications  \cite{yong2013lqmf,huang2015lqinfinite,sun2017mfslq,pham2016lqcmkv,basei2019weak},  leading to the study of suitable Riccati equations, typically  Riccati ordinary differential equations in the presence of idiosyncratic noise only and  Backward Stochastic Riccati equations (BSREs) in the context of common noise.

%Finally, we  refer to  \red{MAKE LIST OF RESULTS, CITe NIZAR?} and the references therein for  recent developments.

\paragraph{Literature: heterogeneous mean-field SDEs, control and games.}
 Motivated by the modeling of increasingly complex networks, the study of large-scale systems of agents with  heterogeneous interactions
 %, which naturally give rise to non-exchangeable systems. In recent years, interest in such models 
has  grown rapidly, particularly through the graphon framework \cite{lovasz_large_2010}. Indeed, such framework was one of the main motivations to build a theory of heterogeneous mean-field systems  \cite{coppini2024nonlinear,bayraktar2023graphon,jabin2025mean,bayraktar2023propagation,bayraktar2024concentration,bayraktar2022stationarity,bayraktar_wu_2023graphon,bayraktar2025nonparametric,bayraktar2025graphon}, heterogeneous mean-field games
\cite{caines2020graphon,lacker2023label,xu2025lqgmfg,aurell2022stochastic,alvarez2025contracting,neuman2024stochastic}, heterogeneous mean-field control \cite{cao2025graphon,de2026mean,de2026linear,djete2025nonexchangeable,kharroubi2025stochastic,Mekkaoui2025analysis}, and heterogeneous mean-field teams \cite{fenghuhuang2023}, which are naturally infinite-dimensional formulations. In particular, to connect later with our work, we recall that  \cite{de2026linear} studied a linear-quadratic framework, which  led to the study of a suitable system of infinite-dimensional Riccati equations in the form of ODEs on the Hilbert spaces $L^2(I;\mathbb R^d)$ and $L^2(I\times I;\mathbb R^{d \times d})$, $I=[0,1]$, while
\cite{cao2025graphon,kharroubi2025stochastic} studied  the Pontryagin's maximum principle and introduced a collection of FBSDEs characterizing the optimal control problem. 
Finally, we refer to \cite{lauriere2025overview} for a very recent review on heterogeneous mean-field games.

While heterogeneous mean-field SDEs \cite{bayraktar2025graphon} and  games \cite{neuman2024stochastic,xu2025lqgmfg} with common noise have been studied, we observe that  mean-field control  with common noise remains largely open.
\paragraph{Literature: infinite-dimensional mean-field control and games.} On the other hand, extensions of standard mean-field theory have also been proposed for  mean-field games, mean-field control and optimal control of stochastic interacting particle systems  in infinite-dimensional spaces. Indeed, results have been obtained for  mean-field games on Hilbert spaces in \cite{federico_gozzi_ghilli_2024,federico_gozzi_swiech_2024,liu_firoozi_2024,firoozi_kratsios_yang_2025,fouque_zhang_2018,ricciardi_rosestolato_2024}, for mean-field control on Hilbert spaces in \cite{cosso_gozzi_kharroubi_pham_rosestolato,defeo_gozzi_swiech_wessels,djehiche_gozzi_zanco_zanella_2022,dumitrescu_oksendal_sulem_2018,gozzi_masiero_rosestolato_2024,spille_stannat_2025,tang_meng_wang_2019}. We  refer to \cite{shi_wang_yong_2013} for mean-field control of stochastic Volterra integral equations and to \cite{buckdahn_li_li_xing_2025,ren_tan_touzi_yang_2023} for path-dependent mean-field control and games. However, we remark that the infinite-dimensional nature of these formulations is very different from the ones modeling heterogeneity recalled in the previous paragraph, as here the goal is to model  an infinite-dimensional system under  homogeneity and anonymity.

\paragraph{Our theoretical contributions.}

In the present work, we begin the investigation of  optimal control problems of heterogeneous mean-field SDEs driven by common noise, which, as observed  above, has not yet been addressed in the literature. We formulate the problem within a linear–quadratic framework, a particularly important class in control theory, typically renowned for its analytical tractability and broad range of applications. 

In particular, we consider the following state equation
\begin{align}\label{eq : state equations LQ common noise_intro}
\begin{cases}
    &\d X^u_s =\left [ \beta_s^u + A_s^u X^u_s +  \int_I G_{s}^A(u,v)\E [ X_s^v | \tilde{\Fc}_s^0] \d v + B_s^u \alpha^u_s \right] \d s \\
    &\qquad\qquad +\left [\gamma_s^u + C_s^u  X^u_s +    \int_I G_{s}^C(u,v) \E [ X_s^v | \tilde{\Fc}_s^0] \d v + D^u_s \alpha^u_s \right ] \d W^u_s +\theta_s^u  \d \tilde{B}_s^0, \quad  s\in[t,T],\\
    & X_t^u=\xi^u, \qquad u \in I, 
\end{cases}
\end{align}
for an admissible collection $\boldsymbol{\xi} = (\xi^u)_{u \in I}$ of initial conditions, a collection of independent Wiener processes $W^u$ (acting as idiosyncratic noise), and an independent Wiener process $\tilde{B}^0$ (acting as a common noise).  The  goal is to minimize the following conditional cost functional over all admissible controls $\balpha = (\alpha^u)_{u \in I}$
\begin{align}\label{eq : cost_functional_intro}
     J(t,\boldsymbol{\xi},\balpha) &:=\;  \int_ I\E\Big[  \int_t^T \langle X^u_s,Q_s^u X^u_s \rangle + \langle \E [ X^u_s | \tilde{\Fc}_s^0 ],\int_I G_{s}^Q(u,v)\E [ X^v_s |\tilde{\Fc}_s^0 ]\d v\rangle + \langle \alpha^u_s + E_s^u,R_s^u (\alpha^u_s + E_s^u) \rangle  \notag \\
     & \qquad  \qquad  \qquad + \;  \langle X^u_T, H^u X^u_T \rangle + \langle \E [ X^u_T |\tilde{\Fc}_T^0 ], \int_I G^{H}(u,v) \E [ X^v_T |\tilde{\Fc}_T^0 ]d v \rangle  \big | \tilde{\Fc}_t^0\Big]\d u, 
\end{align}
where $\tilde{\F}^0 :=(\tilde{\Fc}_t^0)_{t \geq 0}$ is the natural filtration generated by $\tilde{B}^0$. Note that as the collection of i.i.d. Brownian motions $\big \lbrace W^u : u \in I \rbrace$ is  uncountable, we do not have the joint measurability of the map $I \times \Omega \ni (u,\omega) \mapsto X^u(\omega)$ \cite{sun2006llnk}. This may be recast using Fubini extensions \cite{bayraktar2025graphon}. However, in the present setting, it is enough to require that
%However, unlike other works in the literature that work under the Fubini extension \cite{sun2006llnk}, 
the family of conditional distributions given the common noise $\tilde{B}^0$ is measurable.
We first prove well-posedness of the state equation \eqref{eq : state equations LQ common noise_intro} and the conditional cost functional \eqref{eq : cost_functional_intro} (see Theorem \ref{thm: existence_unicityX}). The proof relies on a suitable representation of the solution to \eqref{eq : state equations LQ common noise_intro} for which we show that the mapping $I \ni u \mapsto \P_{(X^u,W^u,\tilde{B}^0)} \in \Pc_2( \Cc^d_{[t,T]} \times \Cc_{[0,T]} \times \Cc_{[0,T]})$ is measurable. 

A standard method for treating LQ control problems in finite- or infinite‐dimensional  spaces is to express the cost functional as the sum of a  quadratic term in the control and a term that does not depend on the control, i.e. the so-called fundamental relation. This is typically achieved by deriving a suitable system of Riccati equations governing the evolution of the quadratic form. This strategy was indeed used in \cite{de2026linear} for heterogeneous mean-field control problems, leading to a system of Riccati ODEs on Hilbert spaces.  Here, we extend this method to the case of common noise. Indeed, we are able to prove a fundamental relation for the conditional (random) cost functional \eqref{eq : cost_functional_intro} (Proposition \ref{prop : fundamental_relation_common_noise}), leading to a novel system of triangular Riccati BSDEs on the Hilbert spaces $L^2(I  ; \R^{d })$ and $L^2(I \times I ; \R^{d \times d})$, i.e. \eqref{eq : standardRiccati K}-\eqref{eq : abstract Riccati barK}. We observe that Equation \eqref{eq : standardRiccati K} is an infinite system, indexed by $u \in I$ of decoupled BSRE with values in $\R^{d \times d}$ which can be solved independently for each label $u \in I$ using standard theory. However, %for $a.e.$ every $u \in I$,
we need to show that $K^u$ is $\tilde{\F}^0$-adapted and that the map $I \times \Omega \ni(u,\omega) \mapsto K^u(\omega) $ is jointly measurable, inspired by arguments  in \cite{kharroubi2025stochastic}. On the other hand, Equation \eqref{eq : abstract Riccati barK} is a new type of backward stochastic Riccati equation on the separable Hilbert space $L^2(I \times I ; \R^{d \times d})$ which, to the best of our knowledge, is new in the literature and cannot be treated  using the standard theory of BSDEs on Hilbert spaces (see e.g. \cite{confortola2007dissipative,fabbri2017stochastic}). The proof of existence and uniqueness for \eqref{eq : abstract Riccati barK} extends the methods of   \cite{de2026linear} (for a deterministic Riccati ODE) to a BSDE setting and is inspired by arguments  in \cite{guatteri2006backward}. Indeed, we prove  an a.s. a priori estimate on the  norm of the integral operator $T_{\bar K_t}$ associated to the solution (see Proposition \ref{prop : a_priori_estimate_barKt}) by using  the fundamental relation (Proposition \ref{prop : fundamental_relation_common_noise}). This fundamental relation allows us to construct  optimal feedback controls (see Theorem \ref{thm : optimal_control_form}).

%\paragraph{Comparison with literature.} %\red{compare witj}\red{BSDE on Hilbert spaces stannat wessels 2021,stannate wessels AAP,All papers by Guatteri, Tessitore, Ichiwaka et}
\paragraph{Applications in mathematical finance.}
We apply the theory developed to two problems arising in mathematical finance in the context of heterogeneity and  common noise, i.e.  optimal trading and  systemic risk.

We introduce an optimal trading problem under heterogeneous market participants and common noise in a cooperative setting (see   \cite{almgren2000optimal} for a framework in standard optimal control and for a mean-field control setting  \cite{frikha2025actorcritic} under homogeneity and exchangeability of the traders). 
%The first example extends the optimal trading control problem introduced in \cite{almgren2000optimal} and studied in \cite{frikha2025actorcritic} by considering a continuum of heterogeneous market participants in the presence of a common noise. We point out that this situation is different from the mean field game situation in \cite{almgren2000optimal} where the investors act strategically in a Nash equilibrium. 
In particular, we consider a large bank with traders indexed by $u \in I$ who act under the guidance of a lead trader. We assume that the inventory of each trader $u \in I$ is given by the following SDE
\begin{align}
        \d X_t^u =   \alpha_t^u   \d t + \sigma_t^u \d W_t^u + \sigma_t^{0,u}   \d \tilde{B}_t^0 , \notag \quad
    X_0^u = \xi^u.
\end{align}
where $\sigma^u$ and $\sigma^{0,u}$ are $\tilde{\F}^0$-adapted processes denoting, respectively the idiosyncratic noise and a common noise on the inventory of the agent $u \in I$. The goal of the lead trader is to minimize  an aggregate conditional cost functional of the form 
\begin{align}\label{eq : conditional_cost_functional_opti_trading_intro}
     \int_I \E \Big[ \int_{t}^{T} (\alpha_s^u + P^u)^2  \d s + \lambda^u \big(  X_T^u - \int_I \tilde{G}^{\lambda}(u,v) \E [ X^v_T |\tilde{\Fc}_T^0 ] \d v  \big)^2  | \tilde{\Fc}_t^0 \Big] \d u,
\end{align}
\begin{comment}
    \red{\begin{align}
     \int_I \E \Big[ \int_{t}^{T} (\alpha_s^u + P^u)^2  \d s + \lambda^u \langle X_T^u - \int_I \tilde{G}^{\lambda}(u,v) \E [ X^v_T |\tilde{\Fc}_t^0 ] \d v  , X_T^u - \int_I \tilde{G}^{\lambda}(u,v) \E [ X^v_T |\tilde{\Fc}_t^0 ]   \d v  \rangle  | \tilde{\Fc}_t^0 \Big] \d u,
\end{align}}
\end{comment}
where  $P^u$ and $\lambda^u $ are constants denoting, respectively the transaction price and risk-aversion parameter for agent $u$, and where $\tilde{G}^{\lambda} \in L^2(I \times I;\R)$ is a symmetric graphon modeling the interactions between traders $u$ and $v$. We remark that the conditional cost functional in \eqref{eq : conditional_cost_functional_opti_trading_intro} can be expressed in the form of \eqref{eq : cost_functional_intro} by Remark \ref{rmk : conditional_functional_cost}.  We solve the problem by applying the theory developed.

As a second application, we consider a systemic risk model. A model with a continuum of heterogeneous banks was recently proposed in \cite{de2026linear}, but the uncertainty there is driven solely by idiosyncratic noise. For systemic risk, however, it is essential to incorporate a common noise component, which captures aggregate shocks affecting the entire banking system. Indeed, this is classical in the standard mean-field framework of \cite{carmona2015systemic, pham2017dynamic} (we also refer, e.g., to \cite{djete2024meanfieldmutual, djete2024meanfieldgamemutual} for further developments on systemic risk in mean-field settings, and \cite{defeo2024sensitivity} for a sensitivity analysis). In this work, we fill this gap by extending the heterogeneous continuum-bank model of \cite{de2026linear} to include a common noise. In particular, we consider a model of interbank borrowing and lending of a continuum of heterogeneous banks, with the log-monetary reserve of each bank $u \in I$ given by
\begin{align}
        \d X_t^u =  \Big[ \kappa (X_t^u - \int_I \tilde{G}^{\kappa} (u,v) \E[ X_t^v | \tilde{\Fc}_t^0 ]   \d v ) + \alpha_t^u   \Big] \d t + \sigma_t^u \Big(  \sqrt{1- (p_t^u)^2}  \d W_t^u + \rho_t^u  \d \tilde{B}_t^0 \Big ), \notag \quad
    X_0^u = \xi^u,
\end{align}
where each bank controls its borrowing and lending rate with the central bank via the policy $\alpha^u$ and $\kappa \leq 0$ is the rate of mean-reversion in the interaction from borrowing and lending between the banks.
$\sigma^u$ is the $\tilde{\F}^0$-adapted volatility  process of the $u$-labeled bank reserve and  $\tilde{G}^{\kappa}\in L^2(I \times I;\R)$ is a symmetric graphon modeling the interactions between banks $u$ and $v$. Here, there is also a common noise $\tilde{B}^0$ which affects bank $u$ through the term $\sigma^u \rho^u$, where $\rho^u$ is an $\tilde{\F}^0$-adapted process valued in $[-1,1]$ acting as a correlation parameter.
The central bank aims to mitigate systemic risk by minimizing an aggregate conditional cost functional of the form
\begin{align}\label{eq : conditional_cost_functional_systemick_risk_intro}
      \int_I \E \Big[ \int_{t}^{T} \Big( \eta^u \big(X_s^u - \int_I \tilde{G}^{\eta}(u,v) \E [ X^v_s |\tilde{\Fc}_s^0 ] \d v \big)^2  + (\alpha_s^u)^2\Big) \d s+ r^u \big( X_T^u  - \int_I  \tilde{G}^r(u,v) \E [ X^v_T |\tilde{\Fc}_T^0 ] \d v  \big)^2| \tilde{\Fc}_t^0 \Big] \d u, 
\end{align}

\begin{comment}
\red{
\begin{align}
   \int_I \E \Big[ \int_{t}^{T} \Big( \eta^u (X_s^u)^2 + \E [X_s^u | \tilde{\Fc}_s^0] \int_I G^{\eta}(u,v) \E [ X^v_s |\tilde{\Fc}_s^0 ] \d v   + (\alpha_s^u)^2  \Big) \d s+ r^u (X_T^u)^2 +  \E [ X^u_T |\tilde{\Fc}_t^0] \int_I  G^r(u,v) \E [ X^v_T |\tilde{\Fc}_t^0 ] \d v | \tilde{\Fc}_t^0 \Big] \d u,
\end{align}}
\end{comment}
where $\eta^u$ and $r^u$ are positive constants penalizing the deviation from the weighted average. Again, the conditional cost functional in \eqref{eq : conditional_cost_functional_systemick_risk_intro} can be expressed in the form of \eqref{eq : cost_functional_intro} by Remark \ref{rmk : conditional_functional_cost}. We solve the problem by applying the theory developed. 
\paragraph{Outline of the paper.} The remainder of the paper is organized as follows. In Section \ref{sec:notations}, we introduce the notations used throughout the paper. In Section \ref{sec:formulation}, we formulate the optimal control problem. In Section \ref{sec:solution}, we introduce the associated infinite-dimensional system of backward Riccati equations, establish existence and uniqueness results, and analyze their solvability. We also derive a fundamental relation by means of a square completion argument. Finally, in Section \ref{sec:applications}, we present two applications in mathematical finance in the context of optimal trading and systemic risk. 

\section{Notations}\label{sec:notations}
\begin{enumerate}
\item [$\bullet$] Given a Banach space $E$ endowed with the norm $\lVert.\rVert_{E}$ and its Borel $\sigma$-algebra $\Bc(E)$, $I$ a compact subset of $\R$ endowed with its Borel $\sigma$-algebra $\Bc(I)$ and  $\lambda(\d u ) = \d u $ the Lebesgue measure on $(I,\Bc(I))$, we introduce the following spaces:
\begin{enumerate}
    \item  $
    \begin{cases}
        L^2(I ; E) &:= \lbrace \phi :   I \ni u \mapsto \phi^u \text{ measurable and } \int_I |\lVert \phi^u \rVert|_{E}^2 \d u< + \infty \rbrace, \notag \\
    L^{\infty}_{}(I;E) &:= \lbrace \phi : I \ni u \mapsto \phi^u \text{ measurable and } \underset{u \in I}{\text{ ess sup }} \hspace{0.2 cm} \lVert \phi^u \rVert_{E}< + \infty \rbrace.
    \end{cases}$
    \item 

$
\begin{cases}
    L^2_{}(I \times I ; E) &:= \lbrace \phi  : I \times I\ni (u,v)\mapsto \phi(u,v) \text{ measurable and } \int_I \int_I  \lVert \phi(u,v) \rVert_{E}^2 \d u \d v   < + \infty \rbrace, \notag \\
    L^{\infty}_{}(I \times I;E) &:= \lbrace \phi : I \times I \ni (u,v) \mapsto \phi(u,v) \text{ measurable and } \underset{u,v\in I}{\text{ ess sup }} \hspace{0.2 cm} \lVert \phi(u,v) \rVert_{E}< + \infty \rbrace,
\end{cases}$
where we recall that $\underset{u \in I}{\text{ ess sup } } \lVert \phi^u \rVert_{E} = \text{ inf } \lbrace M\in \R : \lVert \phi^u \rVert_{E} \leq M \text{ $\d u $ a.e.} \rbrace$.
\item We denote by $\Pc(E)$ the space of probability measures on $(E,\Bc(E))$ and by $\Pc_{p}$ the subset of $\Pc(E)$ of the probability measures with finite $p$-th moments. Moreover, when $E$ is separable, we recall that checking the measurability of maps of the form $I \ni u \mapsto \mu^u \in \Pc_2(E)$ is equivalent to verifying that the maps of the form $
    I \ni u \mapsto \int_E \Phi(x) \mu^u(\d x) \in \R,$
are measurable for every choice of bounded continuous function $\Phi : E \to \R$.
\item We denote by $\Lc(E)$ the Banach space of bounded linear operators on $E$, with norm 
    $\lVert L \rVert_{\Lc(E)} = \underset{\lVert x \rVert_{E} = 1}{\sup } \lVert Lx \rVert_{E} =  \underset{\lVert x \rVert_{E} > 0}{\sup } \frac{\lVert Lx \rVert_{E}}{\lVert x \rVert_{E}}, $ $ L \in \Lc(E).$
    Recall that $\lVert \cdot \rVert_{\Lc(E)}$ is submultiplicative, i.e.
\begin{align}\label{eq : submultipliticativity_operator_norm}
    \lVert L \circ G \rVert_{\Lc(E)} \leq \lVert L \rVert_{\Lc(E)} \rVert G \rVert_{\Lc(E)}, \quad \forall  L,G \in \Lc(E),
\end{align}
where $\circ$ denotes the composition of operators.
\end{enumerate}

\item [$\bullet$] 
When $H$ is a separable Hilbert space endowed with scalar product $\langle \cdot , \cdot \rangle_{H}$, we say that a linear operator $L$ is self-adjoint if it satisfies 
$
    \langle Lf, g \rangle_{H} = \langle f, Lg \rangle_{H}, \forall (f,g) \in H.$
In this case, we also have 
\begin{align}\label{eq: operator_norm_def}
    \lVert L\rVert_{\Lc(H)} = \underset{ \lVert f \rVert_{H} > 0}{\sup}  \frac{ |\langle Lf, f \rangle_{H}|}{\lVert f \rVert_{H}^2} =\underset{ \lVert f \rVert_{H} = 1}{\sup}  | \langle Lf, f \rangle_{H}  |.
\end{align}

\item [$\bullet$] Let $d \in \N^*$ and $I:=[0,1]$ used to label the agents. We will denote by $\S^d$ (resp $\S^d_+,  \S^d_{> +}$) the set of $d$-dimensional symmetric (resp positive semi-definite, positive definite) matrices. For a given matrix $ M :=(M_{i,j})_{1 \leq i,j \leq d} \in \R^{d \times d}$, we will denote by $M^{\top}$ its transpose and by $\text{Tr}(M) = \sum_{i=1}^{d} M_{ii}$ its trace and for two vectors $x,y \in \R^d$, we denote by $\langle x,y \rangle = x^{\top} y = \sum_{i=1}^{d} x_i y_i$ their scalar product where $x_i$ denotes the $i$-th component of $x$. The Euclidean norm on $\R^{d \times m}$ will be denoted $| \cdot |$.

\item [$\bullet$] In the following, we will work mostly on the Hilbert spaces $L^2(I;\R^d)$ and $L^2(I \times I ; \R^{d \times d})$. 

Recall that these spaces are separable according to  \cite[Theorem 4.13]{brezis2011functional} with scalar products $\langle f, g \rangle_{L^2(I;\R^d)} = \int_I f(u)^{\top} g(u) \d u,$ $  f,g \in L^2(I;\R^d),$ 
    $\langle \bar{k},\bar{h} \rangle_{L^2(I \times I ; \R^{d \times d})}= \int_{I\times I} \text{Tr} \big(\bar{k}(u,v)^{\top} \bar{h}(u,v) \big) \d u \d v ,$ $ \bar{k}, \bar{h} \in L^2(I \times I ; \R^{d \times d}).$

We will now introduce the following notation which will be useful in the remainder of the paper.
\begin{enumerate}
    \item We will say that a kernel $K \in L^2(I \times I ; \R^{d \times d})$ is symmetric if the following holds
\begin{align}\label{eq : symmetric_kernels}
    K(v,u)^{\top} = K(u,v), \quad \text{ $\d u \otimes \d v$ a.e.} 
\end{align}
We denote by $L^2_{\text{sym}}(I \times I ;\R^{d \times d})$ the space of kernels $K \in L^2(I \times I; \R^{d \times d})$ satisfying \eqref{eq : symmetric_kernels}.
   \item  Given a kernel $K \in L^2(I \times I ; \R^{d \times d})$, we will denote  the  associated Hilbert-Schmidt linear integral operator $T_K$ as
\begin{align}\label{eq : operator_definition}
    L^2(I;\R^{d}) \ni f \mapsto T_K(f)(\cdot) = \int_I K(\cdot,v) f(v) \d v  \in L^2(I;\R^{d}).
\end{align}
We will say that $T_K$ is a symmetric non-negative  operator on $L^2(I;\R^d)$ if its associated kernel satisfies \eqref{eq : symmetric_kernels} and $\langle f, T_Kf \rangle_{L^2(I;\R^d)} \geq 0$ for every $f \in L^2(I;\R^d)$.
    \item Given a kernel $K \in L^2(I \times I ; \R^{d \times d})$, we define  $K^* \in L^2(I \times I ; \R^{d \times d})$ as
   \begin{align}\label{eq : tilde_kernel}
    K^*(u,v): = K(v,u)^{\top}.
    \end{align}
When $K \in L^2_{\text{sym}}(I \times I ; \R^{d \times d})$, we have $K^*= K$. We also note that
\begin{align}\label{eq : operator_adjoint}
    T_{K^*} = (T_K)^*, 
\end{align}
where $(T_K)^*$ denotes the adjoint operator of $T_K$ and that $\lVert K^{\star} \rVert_{L^2(I \times I ; \R^{d \times d})} = \lVert K \rVert_{L^2(I \times I ;\R^{d \times d})}$ and $\lVert T_{K^{\star}} \rVert_{\Lc(L^2(I;\R^d))} = \lVert (T_K)^{\star} \rVert_{\Lc(L^2(I;\R^d))} = \lVert T_{K} \rVert_{\Lc(L^2(I;\R^d))}$.
  \item Given two kernels $K,W \in L^2(I \times I ; \R^{d \times d})$, it follows directly that the  operator $T_{K} \circ T_{W}$ is associated with the kernel $\big(K \circ W \big)$ defined by
  \begin{align}\label{eq : kernel_product}
      \big(K \circ W \big)(u,v):= \int_I  K(u,w)W(w,v) \d w,
  \end{align}
  \begin{comment}
  and the operator $T_K \circ T_W \circ T_L$ is associated with the kernel $\big(K \circ W \circ L \big)$ defined by \red{Small something.}
  \begin{align}
(K \circ W \circ L)(u,v) = \int_I \int_I K(u,w_1)W(w_1,w_2) L(w_2,v) \d w_2 \d w_1.
\end{align}
\end{comment}
  From the submultiplicativity of the operator norm in \eqref{eq : submultipliticativity_operator_norm},  we also have
  \begin{align}\label{eq : sub_multiplicativity_operator_norms}
    \lVert T_K \circ T_W \rVert_{\Lc(L^2(I;\R^d))} \leq \lVert T_K \rVert_{\Lc(L^2(I;\R^d))} \lVert T_W \rVert_{\Lc(L^2(I;\R^d))}.
\end{align}
  \item Given $L \in L^{\infty}(I ; \R^{d \times d})$, we define the multiplicative operator associated with $L$ as the linear operator on $L^2(I ; \R^{d})$ defined by 
  \begin{align}\label{eq : multiplicative_operator_definition}
    L^2(I;\R^{d}) \ni f \mapsto \Big( I \ni w \mapsto M_L(f)(w) = L^{w}f(w) \in \R^d \Big) \in L^2(I;\R^d).
\end{align}
Recall that $\lVert M_L \rVert_{\Lc(L^2(I;\R^d))} = \underset{w \in I}{\text{ess sup }} |L^w |$.

  Given $K,W \in L^2(I \times I ;\R^{d \times d})$ and $L \in L^{\infty}(I ;\R^{d \times d})$, the operator $T_K \circ M_L \circ T_W$ is associated with the kernel $\big(K \circ L \circ W \big)$ defined by
  \begin{align}\label{eq : kernel_form_multiplicativity}
    \big(K \circ L \circ W \big)(u,v) \ = \int_I K(u,w) L^w W(w,v) \d w  .
\end{align}
Again, from the submultiplicativity of the operator norm in \eqref{eq : submultipliticativity_operator_norm},  we also have
  \begin{align}\label{eq : sub_multiplicativity_operator_norms_with_multiplication}
    \lVert T_K \circ M_L \circ  T_W \rVert_{\Lc(L^2(I;\R^d))} \leq \lVert T_K \rVert_{\Lc(L^2(I;\R^d))} \lVert M_L \rVert_{\Lc(L^2(I;\R^d))} \lVert T_W \rVert_{\Lc(L^2(I;\R^d))}.
  \end{align}

\end{enumerate}

We recall below the inequality between the operator norm of a linear integral operator  and the $L^2$ norm.
\begin{align}\label{eq : operator_norm_inequality}
    \lVert T_K \rVert_{\Lc(L^2(I;  \R^{d }))}^2 \leq  \lVert K \rVert_{L^2_{}(I \times I ; \R^{d \times d})}^2, \quad \forall K \in L^2(I \times I; \R^{d \times d}).
\end{align}

\item [$\bullet$] Given a time horizon $T \geq 0$ and a normed vector space $(E, \lVert \cdot \rVert_E)$, we denote by $\Cc({[t,T]};E)$ the space of continuous functions $[t,T] \to E$ endowed with its Borel $\sigma$-algebra and its supremum norm $\lVert \omega \rVert_{\Cc({[t,T]};E)} = \underset{s \in [t,T]}{\sup} \lVert \omega_s \rVert_{E} $ for $\omega \in \Cc({[t,T]};E)$. When $E=\R^d$, we will only write $\Cc^d_{[t,T]}$ and when $d=1$, only $\Cc_{[t,T]}$. We denote by $\W_T$ the Wiener measure on $\Cc_{[0,T]}$. 
\item [$\bullet$] Given a Brownian motion $W:=(W_t)_{t \geq 0}$ valued in $\R^d$ defined on a complete filtered probability space $(\Omega,\Fc,\P)$, $\F =(\Fc_t)_{t \geq 0}$ its natural filtration and a separable Hilbert space $H$ with scalar product $\langle \cdot ,\cdot \rangle_{H}$, we introduce the following spaces
\begin{align}
\begin{cases}
    \S^2_{\F}([0,T];H) := \big \lbrace Y = (Y_t)_{0 \leq t \leq T} : Y \text{ is $\F$-progressively measurable and  $\E\big[ \underset{0 \leq t \leq T}{\sup} \lVert Y_t \rVert^2_{H} \big] < + \infty \rbrace$}, \\
    \H^2_{\F}([0,T];H) := \big \lbrace Z=(Z_t)_{0 \leq t \leq T} : Z \text{ is $\F$-progressively measurable and $\E \big[ \int_{0}^{T} \lVert Z_t \rVert^2_{H} \d t \big] < + \infty \rbrace.$ }
\end{cases}
\end{align}
Recall that $(\S^2_{\F}([0,T];H),\lVert \cdot \rVert_{\S^2_{F}([0,T];H)})$ and $(\H^2_{\F}([0,T];H),\lVert \cdot \Vert_{\H^2_{\F}([0,T];H)})$ are Banach spaces where $\lVert Y \rVert_{\S^2_{F}([0,T];H)} = \E\big[ \underset{0 \leq t \leq T}{\sup} \lVert Y_t \rVert^2_{H} \big] ^{\frac{1}{2}}$ and $\lVert Z \rVert_{\H^2_{\F}([0,T];H)} =\E \big[ \int_{0}^{T} \lVert Z_t \rVert^2_{H} \d t \big]^{\frac{1}{2}}$.
\begin{comment}
We will also consider in the following the spaces below

\begin{align}
\begin{cases}
    L^{\infty}(I \times\Omega;\Cc^d_{[t,T]}) = \big \lbrace \phi : I \times \Omega \ni (u,\omega) \mapsto \phi^u(\omega) \text{ and} \underset{ (u,\omega) \in I \times \Omega}{\text{ess sup}} \underset{s \in [t,T]}{\sup}{|\phi_s^u(\omega)|} < + \infty \rbrace , \\
    L^{\infty} \big(\Omega ; \Cc([t,T];\Lc_{HS}(L^2(I;\R^d))\big) = 
\end{cases}
\end{align}
We recall that the continuity of the map $[t,T] \ni s \mapsto K_s \in L^2(I \times I ; \R^{d \times d})$ imply the continuity of $[t,T] \ni s \mapsto T_{K_s} \in \Lc(L^2(I;\R^d))$ following \eqref{eq : operator_norm_inequality}. 
\end{comment}

Since $T$ will be fixed throughout the paper, we will  denote $ \Sc^2_{\F}([0,T];H)$ and $\H^2_{\F}([0,T];H)$ respectively by $\Sc^2_{\F}(H)$ and $\H^2_{\F}(H)$ for simplicity.

We refer to \cite{fabbri2017stochastic} for a good introduction to stochastic analysis and control on separable Hilbert spaces.
\item [$\bullet$] Given a $\R^d$-valued Brownian motion $\tilde{B}^0:=(\tilde{B}^0_t)_{t \geq 0}$ with its natural filtration $\tilde{\F}^0=(\tilde{\Fc}_t^0)_{t \geq 0}$ acting as a common noise, we  introduce the following sets of processes

\begin{comment}
\red{FOR SAMY: DO WE USE THESE SPACES BELOW FOR SOME FILTRATION DIFFERENT THATN THE ONE GENERATED BY COMMON NOISE? IF NOT CAN WE EXCHANGE EXPECTATION AND INTEGRAL OVER $I$ IN THE DEFINITIONS BELOW (IT IS THE SAME BY FUBINI OF COURSE)? }
\bl{Samy : Yes, it is jointly measurable with respect to $(u,\omega)$ when this is with common noise. We never use theses spaces when it's not with respect to the common noise.} \red{THEN I WOULD INTRODUCE THEM DIRECTLY FOR THE COMMON NOISE AND EXCHANGE EXPECTATION AND INTEGRAL OVER $I$ BELOW}
\end{comment}

\begin{align}\label{eq : set_representation}
\begin{cases}
   \Sc^2_{\tilde{\F}^0}(I;E)  := \big \lbrace \bX:=(X^u)_{u \in I} : X_s^u = x(u,s,\tilde{B}^0_{. \wedge s})\forall s \in [t,T],  \d u \otimes \d \P \text{ a.e. for a Borel map $x  : I \times [t,T] \times \Cc^d_{[0,T]} \to E$} \\
\qquad \qquad \quad \qquad  \qquad \qquad \quad \text{ and $ \E \Big[ \int_I \underset{t \leq s \leq T}{\sup} \lVert |X_s^u \rVert|^2_{E}$} \d u \Big]  < + \infty \big \rbrace, \\
\Hc^2_{\tilde{\F}^0}(I;E) := \big \lbrace \bZ := (Z^u)_{u \in I } : Z_s^u = z(u,s,\tilde{B}^0_{. \wedge s}),\d u \otimes \d t \otimes \d \P \text{ a.e. for a Borel map $z : I \times [t,T] \times \Cc^d_{[0,T]} \to E $} \\ 
\qquad \qquad \quad \qquad  \qquad \qquad \quad \text{ and $ \E \Big[ \int_I \int_{0}^{T} \lVert |Z_s^u \rVert|_{E}^2$} \d t \d u   \Big] < + \infty \big \rbrace.
\end{cases}
\end{align}

Note that for any $\bX \in \Sc^2_{\tilde{\F}^0}(I;E)$ and $\bZ \in \Hc^2_{\tilde{\F}^0}(I;E)$, these processes are jointly measurable in $(u,t,\omega)$.
Moreover, we define the set 
\begin{align}
    \Sc^2_{\tilde{\Fc}_T^0}(I;E) &:= \big \lbrace \bX := (X^u)_{u \in I} : X^u = x(u,\tilde{B}^0_{. \wedge T}), \quad \d u \otimes \d \P \text{ a.e. } \text{for a Borel map $x: I \times \Cc^d_{[0,T]} \to E$} \\
    &\qquad \qquad \qquad \qquad \quad \text{ and }  \E \Big[ \int_I  \lVert X^u \rVert_{E}^2 \d u   \Big]  < + \infty \big \rbrace.
\end{align}

Similarly, we introduce the spaces $\Sc^2_{\tilde{\F}^0}(I \times I ; E), \Hc^2_{\tilde{\F}^0}(I \times I;E)$ and $\Sc^2_{\tilde{\Fc}_T^0}(I \times I;E)$. These sets will be used to ensure the well-posedness of the control problem \eqref{eq : state equations LQ common noise}-\eqref{eq:J_cost_functional}.

\begin{comment}
\begin{align}
    \Sc&:= \big \lbrace  X \in \S^2(L^2(I \times I;\R^{d \times d}))  : \exists C_T \geq 0 \text{ s.t} \underset{0 \leq t \leq T}{\sup} \lVert T_{X_t} \rVert_{\Lc(L^2(I; \R^{d}))}^2  \leq C_T \quad \text{ $\P$ a.s} \big \rbrace.   \\
    \Sc_{\tilde{\F}^0}^2(\R^d) &:= \big \lbrace X=(X^u)_{u \in I} :  X_s^u = x(u,s,\tilde{B}^0_{. \wedge s} ), \d u \otimes \d \P, \quad  \forall s \in [t,T] \text{ where $x : I \times [t,T] \times \Cc^l_{[0,T]} \to \R^d$ is Borel measurable }  \notag \\
\end{align}
\end{comment}

\item [$\bullet$] In the following, we write $C>0$ as a positive constant depending only on the data of the problem which can vary from line to line.

\end{enumerate}

\section{Problem formulation}\label{sec:formulation}
In this section, we introduce the problem. Let $(\Omega,\Fc,\P)$ be a complete probability space assumed to be rich enough to support an independent family $\lbrace W^u: u \in I \rbrace$ where for every $u \in I$, $W^u$ denotes a $\R$-valued standard Brownian motion.  We refer to \cite{sun2006llnk} for a construction of such a family.  We also consider, on the same probability space, respectively, a $\R$-valued Brownian motion $\tilde{B}^0$ and a uniform random variable $R$ on some compact $\mathcal{K} \subset \R$ assumed to be independent of each other  and of the family  $\lbrace W^u: u \in I \rbrace$. $\tilde{B}^0$ and $R$ will act, respectively, as a common noise and as a randomization for the initial condition. We  introduce $\F^u := (\Fc_t^u)_{0 \leq t \leq T}$ as the natural filtration generated by $W^u$ and $R$,  completed by the family of $\P$-null sets. Moreover, we denote by $\tilde{\F}^0 := (\tilde{\Fc}_t^0)_{0 \leq t \leq T}$ the natural filtration generated by $\tilde{B}^0$ completed by the family of $\P$-null sets. For the sake of simplicity, we consider dynamics driven by $\R$-valued Brownian motions as it simplifies the study of BSDEs on Hilbert spaces in Section \ref{sec:solution} but the results could be extended to the case of multi-dimensional Brownian motions. 
Finally, we introduce $\F^{u,0} = (\Fc_t^{u,0})_{0 \leq t \leq T}$ where $\Fc_t^{u,0} = \Fc_t^u \vee  \tilde{\Fc}_t^0$, the filtration generated by $(W^u,\tilde{B}^0,R)$ completed by the $\P$-null sets.

We assume that the controls are valued in $A = \R^m$ for $m \in \N^*$.
\paragraph{State equation.}
We consider a collection of  $\R^d$-valued state processes $\boldsymbol{X}= (X^u)_u$ , indexed by $u\in I=[0,1]$, and controlled by the collection 
$\balpha$ $=$ $(\alpha^u)_u$, driven by  
%\begin{align}
 %   \d X^u_t = b\left(u,X^u_t, \left( \P_{X^u_t} \right)_u,\alpha^u_t\right)\d t + \sigma\left(u,X^u_t, \left( \P_{X^u_t} \right)_u,\alpha^u_t\right)\d W^u_t, \quad X_t^u=\xi^u \in L^2(\Omega)
%\end{align}
\begin{align}\label{eq : state equations LQ common noise}
\begin{cases}
    &\d X^u_s =\left [ \beta_s^u + A_s^u X^u_s +  \int_I G_{s}^A(u,v)\bar X^v_s \d v + B_s^u \alpha^u_s \right] \d s \\
    &\qquad\qquad +\left [\gamma_s^u + C_s^u  X^u_s +    \int_I G_{s}^C(u,v) \bar X^v_s\d v + D^u_s \alpha^u_s \right ] \d W^u_s+\theta_s^u  \d \tilde{B}_s^0,\quad s\in[t,T]\\
    & X_t^u=\xi^u, \qquad u \in I, 
\end{cases}
\end{align}
where we use the notation 
\begin{align}\label{eq : conditional_expectation_Xu}
    \bar X_s^u:=\mathbb E[X_s^u \mid \tilde{B}^0_{. \wedge s}],\quad s \in [t,T].
\end{align}
 Recalling the notations introduced above, we make the following assumptions on the model coefficients.

\begin{Assumption}\label{assumption : model coefficients}
\begin{enumerate} 
    \item [$\bullet$] $(\beta ,\gamma)\in \S^2_{\tilde{\F}^0}\big(L^2(I;\R^d) \big)$ and $\theta \in \S^2_{\tilde{\F}^0}(L^{2}(I;\R^{d })) \cap L^{\infty}(I \times \Omega; \Cc^d_{[0,T]})$.
    \item [$\bullet$] $(A,C) \in \S^2_{\tilde{\F}^0} \big(I;\R^{d \times d}\big) \cap L^{\infty}(I  \times \Omega;\Cc^{d \times d}_{[0,T]})  ,$
    \item [$\bullet$] $(G^A, G^C) \in \Sc^2_{\tilde{\F}^0}\big(L^2(I \times I ; \R^{d \times d} ) \big) $ and $(T_{G^A} ,T_{G^C}) \in L^{\infty} \big(\Omega ;\Cc([0,T];\Lc(L^2(I;\R^d))\big)$,
    \item [$\bullet$] $(B, D) \in \S^2_{\tilde{\F}^0}\big(L^{2}(I; \R^{d \times m }) \big) \cap L^{\infty}(I  \times \Omega; \Cc^{d \times m }_{[0,T]})$.
\end{enumerate}
\end{Assumption}

\begin{comment}
\begin{Remark}
    Note that we could reformulate \eqref{eq : state equations LQ common noise} as
\begin{align}
    &\d X^u_s =\left [ \beta_s^u + A_s^u X^u_s +  \int_I T_{G_s^A}(\bar X_s)(v) \d v + B_s^u \alpha^u_s \right] \d s \\
    &\qquad\qquad +\left [\gamma_s^u + C_s^u  X^u_s +    \int_I T_{G_{s}^C}(\bar X_s)(v) \d v + D^u_s \alpha^u_s \right ] \d W^u_s,\quad s\in[t,T]\\ 
\end{align}
\end{Remark}
\end{comment}

\noindent We now give the main structures which ensure the well-posedness of the controlled system \eqref{eq : state equations LQ common noise}.

\begin{Definition}[\textnormal{Admissible conditions}]\label{def : admissible_conditions}
\begin{itemize}
    \item [(1)] We say that $\boldsymbol{\xi}=(\xi^u)_u$ is an admissible condition if there exists a Borel measurable function $\xi : I \times [0,T] \times \Cc_{[0,T]} \times \Cc_{[0,T]} \times \mathcal{K}  \to \R^d$ such that we have for a.e. $u \in I$
    \begin{align}\label{eq : hypothesis initial conditions}
        \xi^u = \xi(u,t,W^u_{. \wedge t}, \tilde{B}^0_{. \wedge t},R),  \quad  \P \text{-a.s, }\text{and } \int_I \E \big[ | \xi^u|^2 \big] \d u < + \infty.
    \end{align}
    Such an initial condition is said to be admissible and we denote by $\Ic_t$ the space of admissible initial conditions.
    \item [(2)] We say that $\balpha =(\alpha^u)_u$ is an admissible control if there exists a Borel measurable function $\alpha : I  \times [0,T] \times \Cc_{[0,T]} \times \Cc_{[0,T]} \times \mathcal{K}  \to \R^m$ such that for a.e. $u \in I$
    \begin{align}\label{eq : hypothesis admissible control}
        \alpha_s^u = \alpha(u,s,W^u_{. \wedge s}, \tilde{B}^0_{. \wedge s},R), \quad \P- \text{a.s,} \quad \forall s \in [0,T],\text{ and }   \int_I \int_{0}^{T} \E \big[ |\alpha_s^u |^2  \big ] \d s \d u < + \infty.
    \end{align}
    Such a control is said to be admissible and we  denote by $\Ac$ the space of admissible controls.
\end{itemize}
\begin{Remark}\label{rmk : well_posedness}
\begin{enumerate}
    \item  The admissible conditions \eqref{eq : hypothesis initial conditions} and \eqref{eq : hypothesis admissible control} are chosen similarly to \cite{kharroubi2025stochastic} to ensure the well-posedness of  the control problem as the cost functional will be defined up to the joint probability law  of the state and control processes.
    \item   We point out that unlike the discussion in \cite{de2026linear}, the addition of the common noise in our current setting requires for every $s \in [t,T]$ the joint measurability of a version of the map $I \times \Omega \ni (u,\omega) \mapsto \E \big[X_s^u |\tilde{B}_0 \big] (\omega)$. However, noticing that $\E \big[X_s^u | \tilde{B}_0 \big](\omega) = \int_{\R^d} x d \P_{X_s^u|\tilde{B}^0(\omega)}(\d x)$ for a regular conditional version of the conditional law, the joint measurability can be obtained under the measurability of the joint law $I \ni u \mapsto \P_{(X_s^u, \tilde{B}^0)} \in \Pc_2(\R^d \times \Cc_{[0,T]})$ and the usual disintegration theorem on Polish spaces. Indeed, we construct the measure $\mu$ over the Polish space $I \times \R^d \times \Cc_{[0,T]}$ as follows 
    \begin{align}
        \mu(A \times B) = \int_{A} \mathbb{P}_{(X_s^u, \tilde{B}^0)}(B) \lambda(\d u ), \quad A \in \Bc(I), \quad B \in \Bc(\R^d \times \Cc([0,T]),
    \end{align}
    and where we use for now that the map $I \ni u \mapsto \mathbb{P}_{(X_s^u, \tilde{B}_0)} \in \Pc_2(\R^d \times \Cc_{[0,T]})$ is measurable for any $t \leq s \leq T$.  Hence, we now disintegrate the measure with respect to its coordinates $(u,y)$. Namely, denoting the map $\text{pr}_{13}(u,x,y) :=(u,y)$, we know by the disintegration theorem over Polish spaces  (see Theorem 1.1 in \cite{carmona_probabilistic_2018-1}) the existence of a  measurable kernel $k : I \times \Cc_{[0,T]} \ni (u,y) \mapsto \Pc(\R^d)$ such that $\mu(\d u , \d x , \d y) = k(u,y, \d x) \text{pr}_{13} \sharp \mu (\d u ,\d y)=k(u,y,\d x) \big( \lambda(\d u ) \otimes \W_T(\d y) \big)$, where we have used that  $\text{pr}_{13} \sharp \mu(\d u , \d y) = \lambda(\d u ) \otimes \W_T(\d y)$. Moreover, we  have $\mu(\d u ,\d x, \d y) = \lambda(\d u ) \mathbb{P}_{(X_s^u,\tilde{B}_0)}(\d x, \d y)=  \P_{X_s^u| \tilde{B}_0=y}(\d x) \big(\lambda (\d u) \otimes  \W_T(\d y) \big)$, where we have used the disintegration theorem once again. Thus, we can identify  $k(u,y, \d x)$ as a regular version of the conditional law of $\mathbb{P}_{X_s^u| \tilde{B}_0=y}(\d x)$ which is therefore jointly measurable. Hence, since $\tilde{B}^0$ is measurable, we have that
    \begin{align}
     \E[ X_s^u | \tilde{B}_0](\omega) =  \int_{\R^d} x \P_{X_s^u | \tilde{B_0}(\omega)}(\d x ) = \int_{\R^d} x  k(u,\tilde{B}_0(\omega) , \d x)),
    \end{align}
    which is jointly measurable in $(u,\omega)$.
\end{enumerate}

\end{Remark}

\noindent We now give a definition of a class of solutions for the problem formulation \eqref{eq : state equations LQ common noise}.
\begin{Definition}\label{def : solution space S_t}
   Given $t \in [0,T]$, we say that a family $\boldsymbol{X}:=(X^u)_{u \in I}$ of stochastic processes valued in $\R^d$ belongs to the space $\Sc_t$ if

   \begin{itemize}
       \item[(1)] The map $u \mapsto   \P_{(X^u,W^u,\tilde{B}^0,R)}$ is Borel measurable from $I$ to $\Pc_2( \Cc^d_{[t,T]} \times \Cc_{[0,T]} \times \Cc_{[0,T]} \times \mathcal{K} ) $.
       \item [(2)] Each process $X^u$ is continuous and $\F^{u,0}$-adapted.
       \item [(3)] The following norm is finite: 
$\lVert X \rVert := \Big(\int_I \E \big[ \underset{t \leq s \leq T}{\sup} |X_s^u |^2 \big] \d u  \Big)^{\frac{1}{2}}<\infty.$
    \end{itemize}
       \noindent We say that $(X^u)_{u \in I} \in \Sc_t$ is a solution to \eqref{eq : state equations LQ common noise} if the equations in \eqref{eq : state equations LQ common noise} are satisfied for a.e. $u \in I$. Moreover, we say that the solution is unique if whenever $(X^u)_{u \in I}$ and $(\tilde{X}^u)_{u \in I}$ solve \eqref{eq : state equations LQ common noise}, then the processes $X^u$ and $\tilde{X}^u$ coincide up to a $\P$-null set for a.e. $u \in I$. This is the uniqueness definition used in \cite{de2026mean}. 
\end{Definition}
\end{Definition}
We can now state the following theorem ensuring the well-posedness of the state equation, extending the corresponding result in \cite{de2026linear} to the case with common noise.
\begin{Theorem}\label{thm: existence_unicityX}
    Let $t \in [0,T]$, $\boldsymbol{\xi} \in \Ic_t$ and $\boldsymbol{\alpha} \in \Ac$. Then, there exists a unique solution $\bX=(X^u)_{u \in I}$ to the state equation \eqref{eq : state equations LQ common noise} in the sense of Definition \ref{def : solution space S_t}. Moreover, there exists a measurable function $x : I \times [0,T] \times \Cc_{[0,T]} \times \Cc_{[0,T]} \times \mathcal{K}  \to \R^d$ such that for a.e. $u \in I$, 
    \begin{align}\label{eq : representation_X}
       X_s^u = x(u,s,W^u_{. \wedge s}, \tilde{B}^0_{. \wedge s},R), \quad  \P \text{-a.s}, \quad t \leq s \leq T.
    \end{align}
    In addition, the following estimate holds for some constant $C_T > 0$
      \begin{align}\label{eq :estimate_X}
        \int_I \E \Big[ \underset{t \leq s \leq T}{\sup} |X_s^u |^2 |\tilde{\Fc}_t^0\Big] \d u &\leq C_T \Big( \E \Big[  \int_{t}^{T}  \lVert \beta_s \rVert^2_{L^2(I;\R^d)}+ \lVert \gamma_s \rVert_{L^2(I;\R^d)}^2 \big)  \d s  \big | \tilde{\Fc}_t^0 \Big] \notag \\
        &\qquad \qquad \quad \qquad +  \int_I \E \big[ | \xi^u |^2 |\tilde{\Fc}_t^0\big] \d u + \int_I \int_{t}^{T} \E \big[ |\alpha_s^u |^2 | \tilde{\Fc}_t^0 \big] \d s \d u  \Big), \P \text{-a.s} \ \\
    \end{align}
\end{Theorem}
\begin{proof}
    The proof is postponed to Appendix \ref{appendix: proof_existence_unicityX}.
\end{proof}
\begin{comment}
\begin{Remark}
 Note that for a.e. $u \in I$, the process $X^u :=(X^u_s)_{t \leq s \leq T}$ is $\F^{u,0}$-adapted such that
    $\bar{X}_s^u = \E \big[ X_s^u | \tilde{B}^0] = \E [X_s^u | \tilde{B}^0_{. \wedge t} \big].  $
\end{Remark}
\end{comment}

\begin{comment}  
From standard estimates and assumptions on model coefficients, there exists a constant $C_T > 0$ such that we have for any $\boldsymbol{\xi} \in \Ic_t$ and $\balpha \in \Ac$
    \begin{align}\label{eq :estimate_X}
        \int_I \E \Big[ \underset{t \leq s \leq T}{\sup} |X_s^u |^2 |\tilde{\Fc}_t^0\Big] \d u &\leq C_T \Big( \E \Big[  \int_{t}^{T}  \lVert \beta_s \rVert^2_{L^2(I;\R^d)}+ \lVert \gamma_s \rVert_{L^2(I;\R^d)}^2 \big)  \d s  \big | \tilde{\Fc}_t^0 \Big] \notag \\
        &\qquad \qquad \quad \qquad +  \int_I \E \big[ | \xi^u |^2 |\tilde{\Fc}_t^0\big] \d u + \int_I \int_{t}^{T} \E \big[ |\alpha_s^u |^2 | \tilde{\Fc}_t^0 \big] \d s \d u  \Big), \P \text{-a.s} \ \\
    \end{align} 
For the left-hand-side in \eqref{eq :estimate_X} we used the measurability of the maps $u \mapsto \P_{(X^u,\tilde{B}^0)}$ (which is obtained from \eqref{eq : representation_X}) and   $f : \Cc^d_{[t,T]} \times \Cc_{[0,T]} \ni (\omega,\omega^0) \to (\lVert \omega \rVert^2_{\Cc^d_{[t,T]}}, \omega^0_{. \wedge t})$ after noticing that 
   $ \P_{(\underset{t \leq s \leq T}{\sup} |X_s^u |^2 ,\tilde{B}^0_{. \wedge t})} = f(\cdot,\cdot)_{\sharp} \P_{(X^u,\tilde{B}^0)},$
where $\sharp$ denotes the push-forward measure and where we recall Remark \ref{rmk : well_posedness}.
\end{comment}
\paragraph{Conditional cost functional.} Given $t\in[0,T]$ and $\boldsymbol{\xi} \in\Ic_t$, the goal is  to minimize over all $\balpha \in \Ac$ and state processes  given by \eqref{eq : state equations LQ common noise}, a conditional functional of the form 
%\begin{small}
\begin{align}\label{eq:J_cost_functional}
    J_{}(t,\boldsymbol{\xi},\balpha) &:= \;  \int_ I\E\Big[  \int_t^T \langle X^u_s,Q_s^u X^u_s \rangle + \langle \bar X^u_s,\int_I G_{s}^Q(u,v)\bar X^v_s\d v\rangle + \langle \alpha^u_s + E_s^u,R_s^u (\alpha^u_s +E_s^u) \rangle\\
    &\qquad \qquad + \;  \langle X^u_T, H^u X^u_T \rangle + \langle \bar X^u_T, \int_I G^{H}(u,v)\bar X^v_T\d v \rangle  \big | \tilde{\Fc}_t^0\Big]\d u.
\end{align}
We refer to the control problem \eqref{eq : state equations LQ common noise}-\eqref{eq:J_cost_functional} as the strong formulation.
We point out that for any $\boldsymbol{\xi}$ and $\balpha \in \Ac$, $\big(J(t,\boldsymbol{\xi},\balpha)\big)_{0 \leq t \leq T}$ is an $\tilde{\F}^0$-adapted process. We now give the assumptions on the model coefficients of the conditional cost functional.
\begin{Assumption}\label{assumption : cumulative_terminal_reward}
\begin{enumerate}
    \item [$\bullet$]  $Q \in \Sc^2_{\tilde{\F}^0}(I;\S^d_+) \cap L^{\infty}(I \ \times \Omega ; \Cc^{d \times d}_{[0,T]})$, $G^Q \in \Sc^2_{\tilde{\F}^0}(L^2(I \times I ; \R^{d \times d}))$ and $T_{G^Q} \in L^{\infty}(\Omega  ; \Cc([0,T]; \Lc(L^2(I;\R^d))$.
    \item [$\bullet$] $R \in \S^2_{\tilde{\F}^0}(\S^m_{> +}) \cap L^{\infty}(I \times \Omega; \Cc^{m \times m }_{[0,T]})$  and $E \in \Sc^2_{\tilde{\F}^0}(I;\R^m) \cap L^{2}(I \times \Omega;\Cc^{m}_{[0,T]})$. Moreover, we assume that there exists a constant $c > 0$ such that 
    \begin{align}\label{ass:posR}
         R_s^u  \succeq c I_m, \quad \d u \otimes \d \P-\text{a.e.},
    \end{align}
    for every $s \in [t,T]$ where $\succeq $ denotes the usual partial order on $\S^m_{>+}$.
    \item [$\bullet$] $H \in \Sc^2_{\tilde{\Fc}_t^0}(I; \S_{+}^d) \cap L^{\infty}(I \times \Omega ; \R^{d \times d})$, $G^H \in \Sc^2_{\tilde{\Fc}_T^0}(I \times I; \R^{d \times d})$ and $T_{G^H} \in L^{\infty}(\Omega; \Lc(L^2(I;\R^d)))$.
     \item [$\bullet$] Moreover, we assume that
\begin{align}\label{eq : inequality_condition}
\begin{cases}
    &\int_I \E \big[    \langle X^u, Q_s^u X^u \rangle + \langle \bar{X}^u , \int_I G_s^Q(u,v) \bar{X}^v \d v \rangle  \big | \tilde{\Fc}_t^0  \big] \geq 0, \quad \P-\text{a.s.}, \text{ for every $s \in [t,T]$.} \\
    &\int_I \E \big[    \langle X^u, H^u X^u \rangle + \langle \bar{X}^u , \int_I G^H(u,v) \bar{X}^v \d v \rangle  \big | \tilde{\Fc}_t^0  \big] \geq 0, \quad \P-\text{a.s.},
\end{cases}
\end{align}
for every $\bX \in \Sc_t$.
\end{enumerate}

\end{Assumption}

Without loss of generality, we assume that $(G_s^Q)^*=G_s^Q$ for every $s \in [t,T]$ and $({G}^H)^* = G^H$ where we recall that the $^\star$ notation has been introduced in  \eqref{eq : tilde_kernel}.

%\end{small}

\begin{comment}
\begin{Assumption}\label{assumptions : cost_functional}
    \noindent 
    \begin{itemize}
        \item [(1)] There exists $\delta > 0$ such that $R^u  \geq \delta I_m$ for $\lambda$ \text{a .e} $u \in I$.
        \item [(2)] We have $H^u \geq 0$ and $Q^u \geq 0$ for $\lambda   $ \text{ a.e. $u \in I$}.
        \item [(3)] The operators $T_{G_Q}$ and $T_{G_H}$ are non-negative symmetric operators on $L^2(I;\R^d)$.
    \end{itemize}
\end{Assumption}

\end{comment}

\begin{Remark}\label{rmk : conditional_functional_cost}
    \begin{itemize}
        \item [(1)] The conditional functional cost \eqref{eq:J_cost_functional} is well-defined under Remark \ref{rmk : well_posedness} and following \eqref{eq :estimate_X}, we have for all $t \in [0,T]$, $\boldsymbol{\xi} \in \Ic_t$, $\balpha \in \Ac$
        \begin{small}
        \begin{align}\label{eq : cost_functional_bound}
            J(t,\boldsymbol{\xi},\balpha)  \leq C_T \Big( \E \Big[  \int_{t}^{T}  \lVert \beta_s \rVert^2_{L^2(I;\R^d)}+ \lVert \gamma_s \rVert_{L^2(I;\R^d)}^2 \big)  \d s  \big | \tilde{\Fc}_t^0 \Big] +  \int_I \E \big[ | \xi^u |^2 |\tilde{\Fc}_t^0\big] \d u + \int_I \int_{t}^{T} \E \big[ |\alpha_s^u |^2 | \tilde{\Fc}_t^0 \big] \d s \d u  \Big), \P \text{-a.s}.
        \end{align} 
        \end{small}
        \item [(2)] By Assumption \ref{assumption : cumulative_terminal_reward}, we have for every $t \in [0,T]$, $\boldsymbol{\xi} \in \Ic_t$ and $\balpha \in \Ac$ 
        \begin{align}
            J(t,\boldsymbol{\xi},\balpha) \geq 0  \quad \P \text{-a.s. }.
        \end{align}
        \item [(3)] The inequalities  \eqref{eq : inequality_condition} can be obtained under the stronger assumption that $T_{G_s^Q}$ is  a $\P- \text{a.s.}$ non-negative operator for every $s \in [t,T]$ and $T_{G^H}$ is a $\P-\text{a.s.}$ non-negative operator where the non-negativity property for operators has been defined in Section \ref{sec:notations}.

    \end{itemize}
\end{Remark}
\normalsize

\begin{Remark}\textnormal{(A centered formulation of the control problem).}\label{rmk : alternative_formulation_centered}
As in \cite[Appendix B]{de2026linear}, we can formulate the control problem in a centered way as follows
    \begin{align}\label{eq : centered_formulation}
        \hat{J}(t,\boldsymbol{\xi},\balpha) &= \int_I \E \Big[ \int_{t}^{T} \big\langle Q_s^u (X_s^u - \int_I \hat{G}_s^Q(u,v) \bar{X}_s^v ), X_s^u - \int_I \hat{G}_s^Q(u,v) \bar{X}_s^v )  \big \rangle  + \langle \alpha_s^u + E_s^u , R_s^u (\alpha_s^u +E_s^u) \rangle   \notag \\
          &\qquad \qquad + \;  \big \langle H^u  (X^u_T - \int_I \hat{G}^H(u,v) \bar{X}_T^v ), X_T^u - \int_I \hat{G}^{H}(u,v)\bar X^v_T\d v  \big\rangle \big | \tilde{\Fc}_t^0 \Big]\d u, 
    \end{align}
    for model coefficients satisfying Assumption \ref{assumption : cumulative_terminal_reward}.
     They show that the formulation \eqref{eq : centered_formulation} is equivalent to the formulation \eqref{eq:J_cost_functional} after defining the kernels $G^Q$ and $G^H$ as follows
    $ G_s^Q(u,v) = \big(\hat{G}_s^Q \circ Q_s \circ \hat{G}_s^Q \big)(u,v)  - Q_s^u \hat{G}_s^Q(u,v) - Q_s^v \hat{G}_s^Q(u,v), $ $
        G^H(u,v) = \big( \hat{G}^H \circ H \circ \hat{G}^H)(u,v) - H^u \hat{G}^H(u,v) - H^v \hat{G}^H(u,v).$
    With such a choice of kernels $G^Q$ and $G^H$, we therefore have that Equation \eqref{eq : inequality_condition} is satisfied.
\end{Remark}

\begin{Remark}[Label-state formulation of the control problem]\label{rmk : weak_formulation_control_problem}
The problem could have been formulated in a label-state formulation  \cite{lacker2023label} which avoids  the technical measurability issues of dealing with the collection of Brownian motions $\big \lbrace W^u : u \in I \big \rbrace $, namely the joint measurability over the product $I \times \Omega$ of the processes $\big \lbrace X^u : u \in I \big \rbrace$ and Remark \ref{rmk : well_posedness}. In this setting, one would work on a completed probability space $(\Omega,\Fc,\P)$ supporting a $\R$-valued standard Brownian motion, a uniform random variable $U$ over $I$ used for the labeling of the agents and $R$ a uniform random variable over $\Kc$ used for the randomization of the initial condition.  The associated controlled state process would be then given by
\begin{align}\label{eq : weak_version_control_problem}
\begin{cases}
        & \d X_s =\left [ \beta_s(U) + A_s(U) X_s +  \tilde{\E}^0_{(\tilde{U},\tilde{X}_t) \sim \P_{(U,X_t)}}  \big[G_{s}^A(U,\tilde{U}) \tilde{X}_s \big] + B_s(U) \alpha_s \right] \d s \\
    &\qquad\qquad +\left [\gamma_s(U) + C_s(U)  X_s +    \tilde{\E}^0_{(\tilde{U},\tilde{X}_t) \sim \P_{(U,X_t)}} \big[ G_{s}^C(U,\tilde{U}) \tilde{X}_s \big] + D_s(U) \alpha_s \right ] \d W_s+\theta_s(U)  \d \tilde{B}_s^0,\quad s\in[t,T], \\ 
    &X_0 = \xi,
\end{cases}
\end{align}
where we assumed that $\mathbb{P}_{\xi^u} = \P_{\xi | U=u}$ $\lambda(\d u )-\text{a.e.}$ or equivalently that $\mathbb{P}_{(U,\xi)}(\d u ,\d x) = \mathbb{P}_{\xi^u}(\d x) \lambda(\d u)$ as measures over the space $\Pc_2^{\lambda}(I \times \R^d) := \big \lbrace \mu \in \Pc_2(I \times \R^d) : \text{pr}_{1} \sharp \mu = \lambda \big \rbrace$ where $\text{pr}_1$ denotes the projection map over $I$. Moreover, we refer to $\hat{\tilde{\E}}^0$ as the conditional expectation with respect to $\hat{\tilde{B}}^0$ over a copy space $(\hat{\Omega}, \hat{\Fc},\hat{\P})$ on which are defined $(\hat{U},\hat{X}, \hat{\tilde{B}}^0)$ the independent copies of $(U,X,B^0)$. The cost functional would be given by
\begin{align}\label{eq : cost_functional_weak}
    J(t,\xi,\alpha) &= \E \Big[ \int_{t}^{T} \langle X_s  , Q_s(U) X_s \rangle + \hat{\tilde{\E}}^0_{(\hat{U},\hat{X}_s) \sim \P_{(U,X_s)}}[ X_s G_s^Q(U,\hat{U}) \hat{X}_s] + \langle \alpha_s + E_s(U), R_s(U) (\alpha_s + E_s(U) \rangle \\
    &\quad \quad + \langle X_T, H(U) X_T \rangle + \hat{\tilde{\E}}^0_{(\hat{U},\hat{X}_T) \sim \P_{(U,X_T)}}[X_T G^H(U,\hat{U}) \hat{X}_T] \big | \tilde{\Fc}_t^0 \Big]
\end{align}
The control problem \eqref{eq : weak_version_control_problem}-\eqref{eq : cost_functional_weak} can be viewed as a standard McKean-Vlasov control problem over the extended state space $I \times \R^d$ which allows a richer analysis of the interacting particle system. Whenever we are interested in a cost functional involving the law of the joint process $(U,X)$, \eqref{eq : weak_version_control_problem} can be a good substitute, mathematically speaking, to \eqref{eq : state equations LQ common noise}.
In this setting, we introduced  the filtration $\F = (\Fc_t)_{0 \leq t \leq T}$ generated by the random variables $(U,W,R)$ completed by the family of $\P$-null sets and by $\F^0 := (\Fc_t^0)_{0 \leq t \leq T}$ the filtration generated by $(U,W,\tilde{B}^0,R)$ completed with the family of $\P$-null sets. In this setting, the admissibility conditions are
\begin{align}
\begin{cases}
    \xi &= \xi(U,t, W_{. \wedge t}, \tilde{B}^0_{. \wedge t}, R), \quad \P-\text{a.s.} \text{ and } \E[ |\xi|^2] < \infty, \\
    \alpha_t &= \alpha(U,t, W_{. \wedge t}, \tilde{B_0}_{. \wedge t}, R), \quad \P-\text{a.s and } \E \Big[ \int_{0}^{T} | \alpha_s|^2 \d s \Big]  < \infty,
\end{cases}
\end{align}
for some Borel maps $\xi$ and $\alpha$. We point out that  this setting requires $\hat{\Omega} \ni \hat{\omega} \mapsto \P_{(\hat{\tilde{U}},\hat{\tilde{X_s}})|\hat{\tilde{B_0}}=\hat{\tilde{B_0}}(\hat{\omega})}$ which is a consequence of the disintegration theorem over the Polish space $I \times \R^d \times \Cc_{[0,T]}$. The strong and label-state formulations are mathematically equivalent in the sense that they induce the same distribution over the extended state space $I\times\mathbb{R}^d$. However, they emphasize different aspects of the model. The label-state formulation naturally fits within the classical McKean--Vlasov framework and circumvents the technical issues related to the joint measurability of the continuum of processes $(X^u)_{u\in I}$. On the other hand, the strong formulation preserves the pathwise interpretation of the dynamics, allowing one to track the evolution of each labeled agent $u$ individually. This feature is particularly useful when studying agent-wise properties or deriving pathwise estimates, and motivates our choice of the strong formulation throughout this work.
\end{Remark}

\section{Solution  of the control problem}\label{sec:solution}

\begin{comment}
 \red{Expected form of the optimal control in this case:
Let $\kappa$ be solution to a BSRE.
\begin{align}
    O^u(\kappa)&= R^u + (D^u)^{\top} \kappa D^u + (F^u)^{\top} \kappa F^u  \notag \\
    U^u(\kappa) &= I^u + (B^u)^{\top} \kappa + (D^u)^{\top} \kappa C^u + (F^u)^{\top} \kappa E^u + (F^u)^{\top} Z_t^{K^u} \notag \\
    V^{uv}(\kappa,\bar{\kappa}) &= \notag \\
    \Gamma^u(\kappa,y) &=
\end{align}
 \begin{align}
     \alpha_t^{u,*} =     -(O_t^u)^{-1} \Big(U_t^u X_t^u + \int_I V_t(u,v) \E[X_t^v | B_t^0] \d v \big) + \Gamma_t^u \Big).
\end{align}
\end{comment}

In this subsection, we work on the control problem formulated in \eqref{eq : state equations LQ common noise}-\eqref{eq:J_cost_functional}.  We are going to introduce a new system of Backward Stochastic Riccati equation (BSRE) taking values  in Hilbert spaces related to our control problem. This will be used to derive a \emph{fundamental relation} for our linear-quadratic control problem.

\subsection{Infinite dimensional system and fundamental relation}\label{subsec : infinite_dimensional_system}

\noindent \textbf{Standard BSRE.}

\noindent For $\text{a.e.}$  $u \in I$, we introduce the  following backward Riccati equation
\begin{align}\label{eq : standardRiccati K}
     \d K_t^u =  -  \tilde{\Phi}^u(t,K_t^u) \d t  + Z_t^{K^u} \d W_t^0, \quad 0 \leq t \leq T,  
    \quad K_T^u = H^u,
\end{align}
where the random map $\tilde{\Phi}^u  : \Omega \times [0,T] \times \S^d_+ \ \to \R^{d \times d}$ is defined by
\begin{align}\label{eq : def_tilde_phi_u}
    \tilde{\Phi}^u(t,k ) = \Phi^u(t,k) - U_t^u(k)^{\top} O_t^u(k) U_t^u(k),
\end{align}
and where we defined the random maps 
\begin{align}\label{eq : notations_common_noise_Kt_no_control_vol}
\begin{cases}
&\Phi^u(t,k) :=  (A_t^u)^{\top}k+ k A_t^u  + (C_t^u)^{\top} k C_t^u + Q_t^u, \quad 
O_t^u(k):= R_t^u +  (D_t^u)^{\top} k D_t^u ,  \\
&U_t^u(k) := (B_t^u)^{\top} k +(D_t^u)^{\top} k C_t^u.
\end{cases}
\end{align}
We note that for  $\text{ a.e. } u \in I$, the BSRE \eqref{eq : standardRiccati K} is similar to those studied in  \cite{basei2019weak} and \cite{pham2016lqcmkv}. 

\begin{comment}
    A solution to \eqref{eq : standardRiccati} is a function \red{$K \in \Cc^0([0,T], L^0( I \times \Omega; \S^d) \big)$}.
\end{comment}

\begin{Remark}\label{remark:invO}
 Observe that, for all $\kappa\in\S^d_+$ and for every $t \in [0,T]$, we have  $O_t(\kappa) \in L^{\infty }(I \times \Omega; \S^d_{+})$ and  the inverse is well-defined $\d u \otimes \d \P-\text{a.e}$. Moreover, following \eqref{ass:posR}, we  have   $|O^u_t(\kappa)^{-1} | \leq M,\quad \d u \otimes \d \P-\text{a.e.}$,  for some positive constant $M$ independent of $u$ and for every $t \in [0,T]$.
\end{Remark}

\begin{Definition}\label{def : definition_solution_K}
    We define a solution to \eqref{eq : standardRiccati K} as a pair of collections of processes  $(K,Z^K)=(K^u,Z^{K^u})_{u \in I}$ such that  for a.e. $u \in I$, $(K^u,Z^{K^u})$ solves the equation \eqref{eq : standardRiccati K} and $(K,Z^K) \in  \Sc^2_{\tilde{\F}^0}(I;\R^{d \times d}) \times \Hc^2_{\tilde{\F}^0}{(I;\R^{d \times d})}$.

We say that a solution is unique, if whenever $(K^u,Z^{K^u})_{u \in I}$ and $(\hat{K}^u,Z^{\hat{K}^u})_{u \in I}$ are solutions to \eqref{eq : standardRiccati K}, then the pair of processes $(K^u,Z^{K^u})$ and $(\hat{K}^u,Z^{\hat{K}^u})$ coincide up to a $\P$-null set, for almost every $u \in I$.
\end{Definition}

\begin{comment}
\begin{Remark}\label{rmk : essential_boundedness_K}
    From results from standard theory, we show in Proposition \ref{prop : Ricatti_K} that  $K \in \Sc^2_{\tilde{\F}^0}(I; \S^d_{+}) \cap  L^{\infty}(I\times \Omega ; \Cc^{d \times d}_{[t,T]})$.
\end{Remark}
\end{comment}
To ease the notations, we will  denote 
\begin{align}
   \Phi^u(t,K_t^u) := \Phi_t^u, \quad 
   O_t^u(K_t^u) := O_t^u, \notag \quad
   U_t^u(K_t^u) := U_t^u.  
\end{align}
\begin{comment}
\red{I think that we need to prove that 
\begin{align}
\begin{cases}
   \text{$K_t$ is bounded $\d u \otimes \d \P$ a.e.}, \notag \\
    \text{$\bar{K}_t$ is bounded $\d u \otimes \d v  \otimes \d \P$ a.e.}, \notag \\
    \text{$Y_t$ is bounded $\d u \otimes \d \P$ a.e. },
\end{cases}
\end{align}
because we need somehow to get it outside of the integral at some point. For $K_t$, we can refer to a reference from Huyên, for $\bar{K}_t$, we already need to do it in the first paper \cite{de2026linear} and for $Y_t$ it will be the same argument as for $\bar{K}_t$.}

\end{comment}

\noindent \textbf{Abstract Backward Stochastic Riccati equation.}

Given a solution $(K,Z^K) =(K^u,Z^{K^u})_{u \in I}$ to \eqref{eq : standardRiccati K}, we introduce the following Riccati BSDE on the Hilbert space $L^2(I \times I;\R^{d \times d})$
\begin{align}\label{eq : abstract Riccati barK}
    \d \bar{K}_t =- F(t,\bar{K}_t) \d t + Z_t^{\bar{K}} \d W_t^0, \quad 0 \leq t \leq T, \quad 
   \bar{K}_T = G^H,
\end{align}
where the random map $F : \Omega \times [0,T] \times L^2(I \times I ;\R^{d \times d}) \to L^2(I \times I;\R^{d \times d})$ is defined $\d u \otimes \d v$
a.e. by 
\begin{align}\label{eq : properties_functionF}
    F(t,\bar{k})(u,v) &:= \Psi(u,v)(t,K_t,Z_t^K, \bar{k}) - (U_t^u)^{\top} (O_t^u)^{-1} V(u,v)(t,K_t,\bar{k})  - V(u,v)^{\star}(t,K_t,\bar{k}) (O_t^v)^{-1} U_t^v  \notag \\
    &\quad - \big( V^*(t,K_t,\bar{k}) \circ O _t^{-1} \circ V(t,K_t,\bar{k}) \big)(u,v),   \\
\end{align}
for every $t \in [0,T]$, $\bar{k} \in L^2(I \times I;\R^{d \times d})$ and where we define 
\begin{align}\label{eq : notations_barKt_common_noise_no_control_vol}
\begin{cases}
    \Psi(u,v)(t,K_t, Z_t^K,\bar{k}) &:= 
    K_t^u  G_t^A(u,v)+ (G_t^A)^{\star}(u,v) K_t^v   + (C_t^u)^{\top} K_t^u G_t^C(u,v)   \\
    &\quad + (G_t^C)^{\star}(u,v) K_t^v C_t^v 
     + \big((G_t^C)^* \circ K_t \circ G_t^C \big)(u,v)+  \\
    &\quad   + (A_t^u)^{\top} \bar{k}(u,v) + \big((G_t^A)^* \circ \bar{k} \big)(u,v) + \bar{k}^{\star}(u,v) A_t^v  \\
    &\quad + \big( \bar{k}^* \circ G_t^A)(u,v) +G_t^Q(u,v) 
     \\
    V(u,v)(t,K_t,\bar{k}) &:=(D_t^u)^{\top} K_t^u G_t^C(u,v)  + (B_t^u)^{\top} \bar{k}(u,v),    \\
\end{cases}
\end{align}
where we recall the $\circ$ composition introduced in \eqref{eq : kernel_product} and \eqref{eq : kernel_form_multiplicativity} and the $^*$ notation introduced in \eqref{eq : tilde_kernel}.

\begin{comment}
\textcolor{blue}{
Recalling the notations used in introduction, it follows that the driver $F$ introduced in \eqref{eq : properties_functionF} can be rewritten as 
\small
\begin{align}
    F(t,\bar{k},\bar{z})(u,v) &= K_t^u G_A(u,v) + G_A(v,u)^{\top} K_t^v  + (C^u)^{\top} K_t^u G_C(u,v) + G_C(v,u)^{\top} K_t^v C^v  +(A^u)^{\top} \bar{k}(u,v) + \bar{k}(u,v)A^v  \\
    &\quad + \big( \tilde{G}_C \circ K_t \circ G_C \big)(u,v)  + (\tilde{G}_A \circ \bar{k})(u,v) +\big(\bar{k} \circ G_A \big)(u,v)  + G_Q(u,v) \\
    &\quad - (U_t^u)^{\top} (O_t^u)^{-1}\big( (D^u)^{\top} K_t^u G_C(u,v) + (B^u)^{\top} \bar{k}(u,v)) -  \big( \bar{k}(u,v) B^v + G_C(v,u)^{\top} K_t^v D^v \big)(O_t^v)^{-1} U_t^v \\
    &\quad -  (\tilde{G}_C \circ (K_t D (O_t)^{-1}D^{\top}K_t) \circ  G_C)(u,v)-\big( \tilde{G}_C \circ (K_t D (O_t)^{-1}B^{\top}) \circ \bar{k})(u,v) - (\bar{k} \circ (B (O_t)^{-1} D^{\top} K_t) \circ G_C)(u,v)  \\
    &\quad - \big( \bar{k} \circ  (BO_tB^{\top}) \circ  \bar{k})(u,v)
    \end{align} 
 depends on $\underset{(u,\omega) \in I \times \Omega}{\text{ ess sup }}  \underset{t \in [0,T]}{\text{ sup}} |K_t^u(\omega)|$, $\underset{\omega \in \Omega} {\text{ ess sup }}  \underset{0 \leq t \leq T}{\text{ sup }} \lVert T_{G^C_t}(\omega) \rVert_{\Lc(L^2(I;\R^d))},$ and $\underset{(u,\omega) \in I \times \Omega}{\text{ess sup }}  \underset{0 \leq t \leq T}{\text{sup }}  |(D,B)_t^u(\omega)|$ which are all finite quantities by assumption.
}
\end{comment}
\normalsize

\begin{Lemma}\label{lemma : prop_randommap_F} The random map $F$ satisfies the following: 
\begin{enumerate}
    \item [$(1)$] for any $\bar{k} \in L^2(I \times I ; \R^{d \times d})$, there exists a positive constant $C_T^1$ such that
    \begin{align}
        \lVert T_{F(t,\bar{k})} \rVert_{\Lc(L^2(I;\R^d)} \leq C_T^1 \big( 1+ \lVert T_{\bar{k}} \rVert_{\Lc(L^2(I;\R^d))} + \lVert T_{\bar{k}} \rVert^2_{\Lc(L^2(I;\R^d))} \Big), \quad \P-\text{a.s.}, \quad 0 \leq t \leq T.
    \end{align}
    \item [$(2)$] For any $\bar{k}$, $\bar{k}' \in L^2(I \times I ; \R^{d \times d})$, there exists a positive constant $C > 0$ such that
    \begin{align}
        \lVert F(t,\bar{k}') - F(t,\bar{k}) \rVert_{L^2(I \times I ; \R^{d \times d})} \leq C \Big( (1+ \lVert T_{\bar{k}} \rVert_{\Lc(L^2(I ;\R^d))} + \rVert T_{\bar{k}'} \rVert_{\Lc(L^2(I;\R^d))} ) \lVert \bar{k'} - \bar{k} \rVert_{L^2(I \times I ; \R^{d} \times \R^d)} \Big).
    \end{align}
    \item [$(3)$] The random map $F$ is $\P-\text{a.s.}$ continuous on $[0,T] \times L^2(I \times I ; \R^{d \times d})$.
    \item [$(4)$] $F(t,\bar{k})^{\star}(u,v) = F(t,\bar{k})(u,v)$, for all $t \in [0,T]$, $\bar{k} \in L^2(I \times I ; \R^{d \times d})$ and $(u,v) \in I \times I$.
\end{enumerate}
\end{Lemma}
\begin{proof}
    We prove $(1)$. Let $\bar{k} \in L^2(I \times I ;\R^{d \times d})$ and $t \in [0,T]$. By definition of the random map $F(t,\bar{k})$ in \eqref{eq : properties_functionF}, we use the fact that the operator $T$ defined in \eqref{eq : operator_definition} is linear in its variable $\bar{k}$ and we recall the sub-multiplicativity of the operator norm in \eqref{eq : sub_multiplicativity_operator_norms_with_multiplication} after noticing that $\lVert T_{V(t,\bar{K}_t, \bar{k})} \rVert_{\Lc(L^2(I;\R^d))} \leq C \big( 1+  \lVert T_{\bar{k}} \rVert_{\Lc(L^2(I;\R^d))} \big), \quad \P-\text{a.s.}$ for some constant $C \geq 0$. Hence, using again the sub-multiplicativity of the operator norm in \eqref{eq : sub_multiplicativity_operator_norms_with_multiplication}, and the assumptions on all the model coefficients,  the result follows where the quadratic term $\lVert T_{V^{\star}(t,K_t, \bar{k})} \circ M_{O_t^{-1}} \circ T_{V(t,K_t,\bar{k})} \rVert_{\Lc(L^2(I;\R^d))}$ is handled recalling \eqref{eq : operator_adjoint},\eqref{eq : sub_multiplicativity_operator_norms_with_multiplication}, \eqref{eq : notations_common_noise_Kt_no_control_vol}.
    \\
    We prove $(2)$. Let $k,k' \in L^2(I \times I ;\R^{d \times d})$ and $t \in [0,T]$. We focus on the quadratic term as it is the main difficulty. In fact, it is enough to consider the map 
    \begin{align}
        (u,v) \mapsto \int_I \bar{k}(u,w)  \bar{k}(w,v) \d w  - \int_I \bar{k}'(u,w) \bar{k}'(w,v) \d w,
    \end{align}
    where we omit the terms $(B,O)$ since we have by assumption $\underset{(u,\omega) \in I \times \Omega}{\text{ ess sup }} \underset{0 \leq t \leq T}{\text{ sup }} |O_t^u(\omega)^{-1}| < \infty$ and similarly for $B$. Therefore, we have
    \begin{small}
    \begin{align}\label{eq : equality_quadratic_terms}
        \int_I \bar{k}(u,w)  \bar{k}(w,v) \d w  - \int_I \bar{k}'(u,w) \bar{k}'(w,v) \d w &= \int_I \bar{k}(u,w)  \big( \bar{k}(w,v) - \bar{k}'(w,v) \big) \d w + \int_I \big(\bar{k}(u,w)- \bar{k}'(u,w) \big) \bar{k}'(\omega,v)  \d w,
    \end{align}
    \end{small}
    Fix $v \in I$ and denote the map $H_v(w) = \bar{k}(w,v) - \bar{k}'(w,v)$. We notice that
    \begin{align}
        \int_I \bar{k}(u,w) \big(\bar{k}(w,v) - \bar{k}'(w,v) \big) \d w = \tilde{T}_{\bar{k}} H_v(u),
    \end{align}
    where $\tilde{T}_{\bar{k}}$ is the extension of $T_{\bar{k}}$ over $L^2(I;\R^{d \times d })$, i.e. 
    \begin{align}
        L^2(I: \R^{d \times d}) \ni H \mapsto \tilde{T}_{\bar{k}} H(\cdot) := \int_{I} \bar{k}(\cdot,v) H(v) \d v \in L^2(I;\R^{d \times d}).
    \end{align}
    Moreover, we have $\lVert \tilde{T}_{\bar{k}} \rVert_{\Lc(L^2(I ; \R^{d \times d}))} = \lVert T_{\bar{k}} \rVert_{\Lc(L^2(I;\R^d)}$. Indeed, let $H \in L^2(I;\R^{d \times d})$ that we rewrite $H=(H_1,\ldots, H^d)$ where $H^i \in L^2(I;\R^d)$ for any $1 \leq i \leq d$. Then, we have that $\tilde{T}_{\bar{k}} H= (T_{\bar{k}} H_1,\ldots, T_{\bar{k}} H^d)$. Hence, it follows that
        \begin{align}
        \lVert \tilde{T}_{\bar{k}} H \rVert_{L^2(I;\R^{d \times d})} = \sum_{j=1}^{d} \lVert T_{\bar{k}} H^j \rVert_{L^2(I;\R^d)} &\leq \lVert T_{\bar{k}} \rVert_{\Lc(L^2(I;\R^d))} \sum_{j=1}^{d} \lVert H^j \rVert_{L^2(I;\R^d)}, \\
        &= \lVert T_{\bar{k}} \rVert_{\Lc(L^2(I;\R^d))} \lVert H \rVert_{L^2(I; \R^{d \times d})},
    \end{align}
    from which we deduce $\lVert \tilde{T}_{\bar{k}} \rVert_{\Lc(L^2(I;\R^{d \times d}))} \leq \lVert T_{\bar{k}} \rVert_{\Lc(L^2(I; \R^d))}$. Moreover, the reverse inequality is obtained by considering $H=(f,0,\ldots,0) \in L^2(I;\R^{d \times d})$ for any $f \in L^2(I;\R^d)$. Therefore, the claim follows.
    Hence, by definition of the operator norm, it follows that
    $\lVert \tilde{T}_{\bar{k}} H_v \rVert_{L^2(I;\R^{\d \times d}) } \leq \lVert \tilde{T}_{\bar{k}} \rVert_{\Lc(L^2(I ;\R^{\d \times d})} \lVert H_v \rVert_{L^2(I; \R^{d \times d})}$. Now, noticing that 
    \begin{align}
        \lVert \int_I  \bar{k}(\cdot , w) \big( H_{\cdot}(w) \big) \d w  \rVert^2_{L^2(I \times I;\R^{d \times d})} &= \lVert \tilde{T}_{\bar{k}} H_{\cdot}(\cdot) \rVert_{L^2(I \times I; \R^{d \times d})}^2 = \int_I \int_I  |\tilde{T}_{\bar{k}} H_v(u)|^2 \d u \d v
        = \int_I \lVert \tilde{T}_{\bar{k}}H_v \rVert^2_{L^2(I; \R^{d \times d})} \d v , \\
        &\leq  \rVert \tilde{T}_{\bar{k}} \rVert_{\Lc(L^2(I ;\R^{\d \times d}))}^2 \int_I \lVert H_v \rVert_{L^2(I; \R^{d \times d})}^2 \d v 
        =  \rVert T_{\bar{k}} \rVert_{\Lc(L^2(I ;\R^{\d \times d})}^2 \lVert \bar{k}' - \bar{k} \rVert^2_{L^2(I \times I ; \R^{d \times d})}.
    \end{align}
    For the second term in the right-hand side of \eqref{eq : equality_quadratic_terms}, we use the adjoints. Indeed, we end up doing similar computations to the previous case by denoting the map $(u,v) \mapsto L^{\star}(v,u) =L(u,v)^{\top}= \int_I \bar{k}'(w,v)^{\top} \big( \bar{k}(u,w) - \bar{k}'(u,w) \big)^{\top} \d w= \int_I \bar{k}^{',\star}(v,w) \big( \bar{k}^{\star}(w,u) - (\bar{k}^{'})^{\star}(w,u) \big) \d w $. Hence, it follows that
    \begin{align}
       \rVert  L^{\star} \rVert_{L^2(I \times I;\R^{d \times d})} \leq \lVert T_{(\bar{k}')^{\star}} \rVert_{\Lc(L^2(I;\R^{d \times d}))} \lVert \bar{k}^{\star} - (\bar{k}^{'})^{\star} \rVert_{L^2(I \times I ; \R^{d \times d})}.
     \end{align}
     Now, using that $\lVert T_{(\bar{k}^{'})^{\star}} \rVert_{\Lc(L^2(I ; \R^d))} = \lVert T_{\bar{k}'} \rVert_{\Lc(L^2(I;\R^{d}))}$ and that $\lVert \bar{k}^{\star} - (\bar{k}^{'})^{\star} \rVert_{L^2(I \times I ; \R^{d \times d})} = \lVert \bar{k} - \bar{k}' \rVert_{L^2(I \times I ; \R^{d \times d})}$, we therefore end up with
     \begin{align}
         \lVert  \int_I \bar{k}(\cdot,w)  \bar{k}(w,\cdot) \d w  - \int_I \bar{k}'(\cdot,w) \bar{k}'(w,\cdot) \d w \rVert_{L^2(I \times I ; \R^{d \times d})} \leq  \big( \lVert T_{\bar{k}}\rVert_{\Lc(I;\R^d)} + \lVert T_{\bar{k}'} \rVert_{\Lc(L^2(I;\R^d)} \big) \big \lVert \bar{k}- \bar{k}' \rVert_{L^2(I \times I ; \R^{d \times d})}
     \end{align}
     \\
     We prove $(3)$. The $\P-\text{a.s.}$ continuity of the map follows from the assumption on the model coefficients and the fact that the solution $K$ from \eqref{eq : standardRiccati K}  is itself  continuous.
     \\
     We prove $(4)$. We show the property for the term $\Psi$ since it is clear that the other terms satisfy the invariance by the adjoint operation. Hence, by definition of $\Psi$ in \eqref{eq : notations_barKt_common_noise_no_control_vol} and by the symmetry of the solution $K$ to the standard BSRE, the assumption on $G^Q$ and the fact that $(G_1 \circ G_2)^{\star} = (G_2^{\star} \circ G_1^{\star})$, the result follows.
\end{proof}

\begin{Definition}\label{def : eq_abstracticatti barK}
    We define a solution to \eqref{eq : abstract Riccati barK} as a pair of processes $(\bar{K}, Z^{\bar{K}})$ such that $(\bar{K},Z^{\bar{K}})$ solves equation \eqref{eq : abstract Riccati barK} and $(\bar{K}, Z^{\bar{K}}) \in \S^2_{\tilde{\F}^0} \big(L^2(I \times I, \R^{d \times d}) \big) \times \H^2_{\tilde{\F}^0} \big(L^2(I \times I,\R^{d \times d} ) \big)$.
    
We say that a solution is unique if whenever $\bar{K}$ and $\tilde{\bar{K}}$ are solutions to \eqref{eq : abstract Riccati barK}, then the $L^2(I \times I;\R^{d \times d})$-valued processes $(\bar{K},Z^{\bar{K}})$ and $(\tilde{\bar{K}},Z^{\tilde{\bar{K}}})$ coincide up to a $\P$-null set. 
\end{Definition}
\begin{Remark}
\begin{enumerate}
    \item [(1)] In fact, we will show that the operator $T_{\bar{K}} \in L^{\infty}(\Omega;\Cc([0,T];\Lc(L^2(I;\R^d)))$ which implies the existence of a positive constant C such that
    \begin{align}\label{eq : a_priori_estimate_control}
        \underset{s \in [t,T]}{\sup} \lVert T_{\bar{K}_s} \rVert_{\Lc(L^2(I;\R^d))} \leq C, \quad  \P-\text{a.s.}.
    \end{align}
    \item [(2)] Note that if $(\bar{K}, Z^{\bar{K}})$ is a solution under Definition \ref{def : eq_abstracticatti barK}, then there exists Borel measurable functions, $\bar{k}$ and $\bar{z}$ defined on $I \times I \times [0,T] \times \Cc_{[0,T]}$ such that
            $\bar{K}_t(u,v) = \bar{k}(u,v,t, \tilde{B}^0_{. \wedge t}), $ $ \bar{Z}_t(u,v) = \bar{z}(u,v,t,\tilde{B}^0_{. \wedge t}), $ $ \d u \otimes \d v \otimes \d \P  \text{ a.e.}.$
\end{enumerate}
\end{Remark}
\noindent To ease the notations, we will now denote 
\begin{align}
   \Psi(u,v)(t,K_t,Z_t^K, \bar{K}_t) := \Psi_t(u,v), \quad
   V(u,v)(t,K_t,\bar{K}_t) := V_t(u,v).
\end{align}
\noindent \textbf{Linear Equations.} Given $(K,Z^{K})$ and $(\bar{K},Z^{\bar{K}})$ respectively solutions to \eqref{eq : standardRiccati K} and \eqref{eq : abstract Riccati barK}, we introduce the following Riccati BSDE valued in the Hilbert space $L^2(I; \R^d)$
\begin{align}\label{eq : linear_Riccati_Y_common_noise}
    \d Y_t  = - \hat{F}(t,Y_t) \d t + Z_t^{Y} \d W_t^0,  \quad 
    Y_T = 0,
\end{align}
where the random map $\hat{F} : \Omega \times [0,T] \times  L^2(I;\R^d) \to L^2(I; \R^d)$ is defined $\d u \text{ a.e.}$ by 
\begin{align}\label{eq :  tilde F}
    \hat{F}(t,y)(u) &:=M(u)(t,K_t, Z_t^K, \bar{K}_t, \bar{Z}_t^{K},y)
     - (U_t^u)^{\top} (O_t^u)^{-1} \Gamma^u(t,K_t,y) -  T_{V_t^*}\big((O_t)^{-1} \Gamma(t,K_t,y) \big) 
\end{align}
for every $t \in [0,T]$, $y,z \in L^2(I;\R^d)$, and where we defined 
\begin{align}\label{eq : notations_common_noise_Yt_no_control_vol}
\begin{cases}
    M(u)(t,K_t, Z_t^K, \bar{K}_t, \bar{Z}_t^{K},y) &:=  K_t^u \beta_t^u +  Z_t^{K^u} \theta_t^u +  (C_t^u)^{\top} K_t^u \gamma_t^u   + T_{(G_t^C)^*}(K_t \gamma_t)(u) \\&\quad 
     +  T_{\bar{K}_t}(\beta_t)(u) + T_{Z_t^{\bar{K}}}(\theta_t)(u) + T_{(G_t^A)^*}(y)(u)        \\
    \Gamma(u)(t,K_t,y) &:= (D_t^u)^{\top}  K_t^u \gamma_t^u  + (B_t^u)^{\top} y^u + R_t^u I_t^u ,  
\end{cases}
\end{align}
\begin{comment}
\textcolor{blue}{
Recalling the notations used in introduction, it follows that the driver $\tilde{F}$ introduced in \eqref{eq :  tilde F} can be rewritten as 
\small
\begin{align}
    \tilde{F}(t,y)(u) &= K_t^u \beta^u + Z_t^{K^u} \theta^u +(C^u)^{\top} K_t^u \gamma^u + T_{\tilde{G}_C}(K_t \gamma)(u)  + T_{\bar{K}_t}(\beta)(u)  
    \quad  + T_{Z_t^{\bar{K}}}(\theta)(u)  \\
    &\quad + (A^u)^{\top} y^u + T_{\tilde{G}_A}(y)(u)  - (U_t^u)^{\top} (O_t^u)^{-1} (D^u)^{\top} K_t^u \gamma^u  \\
    &\quad - (U_t^u)^{\top}(O_t^u)^{-1} (B^u)^{\top} y^u  - T_{\tilde{V_t}}((O_t)^{-1}D^{\top} K_t\gamma))(u) - T_{\tilde{V}_t}((O_t)^{-1} B^{\top} y)(u) \\
    &= \Big[ (A^u)^{\top} - (U_t^u)^{\top} (O_t^u)^{-1} (B^u)^{\top} \Big] y^u + T_{\tilde{G}_A - \tilde{V}'_t}(y)(u)
    \end{align} 
}
Denote by $V'_t(u,v) =B^u(O_t^u)^{-1} V_t(u,v) $
\end{comment}

\begin{Remark}\label{rmk : terms_Y}
\begin{enumerate}
    \item The driver $\hat{F}$ in \eqref{eq : linear_Riccati_Y_common_noise} can be rewritten as 
    \begin{align}\label{eq : driver_tildeF_second_form}
        \hat{F}(t,y) = A_t y + f_t,
    \end{align}
    where for any $t \in [0,T]$, we define the linear operator $A_t$  on $L^2(I;\R^d)$ and $f_t \in L^2(I;\R^d)$ as 
    \begin{align}\label{eq : def_operators}
         \begin{cases}
        L^2(I;\R^d) \ni f \mapsto A_t(f)(u) &= P_t^u f(u) + T_{(V'_t)^*}(f)(u) 
        \\
        I \ni u  \mapsto f_t(u) &=   K_t^u \beta_t^u +  Z_t^{K^u} \theta_t^u +  (C_t^u)^{\top} K_t^u \gamma_t^u   - (U_t^u)^{\top} (O_t^u)^{-1} (D_t^u)^{\top} K_t^u \gamma_t^u \\
        &\quad - (U_t^u)^{\top} (O_t^u)^{-1}R_t^u I_t^u  +T_{(G_t^C)^*}(K_t \gamma_t)(u)  +  
       T_{\bar{K}_t}(\beta_t)(u)  \\
     & \quad +T_{Z_t^{\bar{K}}}(\theta_t)(u) - T_{(V_t)^*}((O_t)^{-1}D_t^{\top} K_t\gamma_t))(u) - T_{(V_t)^*}(O_t^{-1}R_t I_t)(u),
    \end{cases}
    \end{align}
    where to ease notations we have denoted 
    \begin{align}
         V'_t(u,v) = G_t^A(u,v)  - B_t^u (O_t^u)^{-1} V_t(u,v), \quad
         P_t^u = (A_t^u)^{\top} - (U_t^u)^{\top} (O_t^u)^{-1} (B_t^u)^{\top}.
    \end{align}
    \item There exists a positive constant $C$ such that $\underset{0 \leq t \leq T}{\sup} \lVert A_t \rVert_{\Lc(L^2(I;\R^d))} \leq C, \quad \P \text{-a.s}$. 
    
    It follows from the classical inequality $\lVert x + y \rVert^2_{L^2(I;\R^d)} \leq 2 \big ( \lVert x \rVert^2_{L^2(I;\R^d)} + \lVert y \rVert^2_{L^2(I;\R^d)} \big)$, assumption on model coefficients Remark \ref{remark:invO}, Proposition \ref{prop : Ricatti_K} and  Equation
    \eqref{eq : a_priori_estimate_control}.

  \item We have $\E \Big[ \int_{0}^{T} \lVert f_t \rVert_{L^2(I;\R^d)}^2 \d t \Big] < + \infty.$

  It follows again from the assumptions on the model coefficients and Equations \ref{remark:invO},  Proposition \ref{prop : Ricatti_K} and  Equation
    \eqref{eq : a_priori_estimate_control}. 
\begin{comment}

  We want to point out that we require $\theta \in L^{\infty}(I \times \Omega; \Cc^d_{[t,T]})$ to handle the term $\lVert T_{Z_t^{\bar{K}}}(\theta)(.) \rVert_{L^2(I;\R^d)}^2 \leq \lVert Z_t^{\bar{K}} \rVert_{L^2(I \times I ; \R^{d \times d})}^2  \lVert \theta \rVert^2_{L^{\infty}(I;\R^d)}$ where the inequality follows from Cauchy-Schwarz. 

  We just give the proof for the only technical term as the computations are very similar for the others. Since $\theta  \in \S^2_{\tilde{\F}^0}(L^{\infty}(I;\R^d))$, and following Cauchy-Schwarz inequality, we have

  \begin{align}
         \lVert T_{Z_t^{\bar{K}}}(\theta)(.) \rVert_{L^2(I;\R^d)}^2 = \int_I \lVert \int_I Z_t^{\bar{K}}(u,v) \theta(v) \d v \rVert^2  \d u \leq \lVert Z_t^{\bar{K}} \rVert_{L^2(I \times I ; \R^{d \times d})}^2  \lVert \theta \rVert^2_{L^{\infty}(I;\R^d)},
  \end{align}
  which implies
  \begin{align}
      \E \Big[ \int_{0}^{T} \lVert T_{Z_t^{\bar{K}}}(\theta)(.) \rVert_{L^2(I;\R^d)} \d t \Big] \leq C \E \Big[ \int_{0}^{T} \lVert Z_t^{\bar{K}} \rVert_{L^2(I \times I ; \R^{d \times d})}^2 \d t \Big] < + \infty .
  \end{align}
\end{comment}
  
\end{enumerate}
\end{Remark}

To ease the notations, we will now denote
\begin{align}
   M(u)(t,K_t,Z_t^K, \bar{K}_t, Z_t^{\bar{K}},Y_t):= M_t^u, \quad 
\Gamma(u)(t,K_t,Y_t) := \Gamma_t^u.
\end{align}
\begin{Definition}\label{def : linear_Riccati_Y}
    We define a solution to \eqref{eq : linear_Riccati_Y_common_noise} as a pair of processes $(Y,Z^{Y})$ such that $(Y,Z^{Y})$ solves equation \eqref{eq : linear_Riccati_Y_common_noise} and   $(Y,Z^{Y}) \in \S_{\tilde{\F}^0}^2 \big(L^2(I,\R^d) \big) \times \H_{\tilde{\F}^0}^2 \big(L^2(I,\R^d)\big)$.
    
    We say that a solution is unique, if whenever $(Y,Z^{Y})$ and $(\tilde{Y},Z^{\tilde{Y}})$ are solutions to \eqref{eq : linear_Riccati_Y_common_noise}, then the $L^2(I  ;\R^{d})$ valued processes $(Y,Z^{Y})$ and $(\tilde{Y}, Z^{\tilde{Y}})$ coincide up to a $\P$-null set.
\end{Definition}
\noindent Finally, we introduce for $a.e. $ $u \in I$ the linear BSDE
\begin{align}\label{eq : RiccatiLambda}
    \d \Lambda_t^u = - \tilde{L}_t^u \d t + Z_t^{\Lambda^u} \d W_t^0,   \quad 
    \Lambda_T^u =0,
\end{align}
where we define for a.e. $u \in I$ and for  every $t \in [0,T]$,
\begin{align}\label{eq :  tildeL}
    \tilde{L}_t^u =  L_t^u(K_t,\bar{K}_t,Y_t,Z_t^{Y}) - \langle \Gamma_t^u, (O_t^u)^{-1} \Gamma_t^u \rangle,
\end{align}
with
\begin{align}\label{eq : coeff_Lambda}
    L_t^u(K_t,\bar{K}_t,Y_t,Z_t^{Y}) &:= \langle \gamma_t^u, K_t^u \gamma_t^u \rangle + \langle \theta_t^u, K_t^u, \theta_t^u \rangle + 2 \langle \beta_t^u,Y_t(u) \rangle  + 2 \langle \theta_t^u, Z_t^Y(u) \rangle  \\
    &\quad \quad+\int_I \langle \theta_t^u, \bar{K}_t(u,v) \theta_t^v \rangle \d v  + \langle E_t^u, R_t^u E_t^u \rangle .
\end{align}
\begin{Remark}
    Note that given solutions $(K,Z^K) $, $(\bar{K}, Z^{\bar{K}})$ and $(Y,Z^{Y})$ in the sense respectively of Definition \ref{def : definition_solution_K}, \ref{def : eq_abstracticatti barK} and \ref{def : linear_Riccati_Y}, for a.e. $u \in I$,  $(\Lambda^u,Z^{\Lambda^u})$ is a standard BSDE with given coefficients which can be uniquely solved in $\S^2_{\tilde{\F}^0}(\R^d) \times \H^2_{\tilde{\F}^0}(\R^d)$ from standard theory and we have 
    \begin{align}\label{eq : lambda_representation_common_noise}
        \Lambda_t^u &= \int_{t}^{T} \E \Big[ L_s^u(K_s,\bar{K}_s, Y_s,Z_s^{Y}) - \langle \Gamma_s^u, (O_s^u)^{-1} \Gamma_s^u \rangle \big | \tilde{\Fc}_t^0 \Big ] \d s.
    \end{align}
    which holds $\d u \otimes \d \P$ for every $t \in [0,T]$. 
    Moreover, for every $t \in [0,T]$, the joint measurability of $(u,t,\omega) \mapsto \Lambda_t^u(\omega)$  follows from the suitable representation of the solutions $(K,\bar{K}, Y,Z^{Y})$ and from Remark \ref{rmk : well_posedness}.
\end{Remark}
To ease the notations, we will now denote
\begin{align}
   L_t^u(K_t,\bar{K}_t, Y_t,Z_t^{Y}):= L_t^u.
\end{align}
\begin{Remark}\label{rmk : system_backward_equations}
\begin{itemize}
    \item [(i)]  If $\beta=  \gamma = \theta \equiv 0$, the solution to the linear equation \eqref{eq : linear_Riccati_Y_common_noise} is $(Y,Z^Y) \equiv 0$. Moreover, by \eqref{eq : lambda_representation_common_noise} and assumption on model coefficients \eqref{ass:posR} and Remark \ref{remark:invO},  there exists  a positive constant $C_T' \geq 0$ such that for every $t \in [0,T]$
$- C_T' \leq \int_I \Lambda_t^u \d u \leq  C_T', $ $ \P \text{-a.s}$.
    \item [(ii)] Equations \eqref{eq : abstract Riccati barK} and \eqref{eq : linear_Riccati_Y_common_noise} are BSDEs on Hilbert spaces whereas \eqref{eq : standardRiccati K} and  \eqref{eq : RiccatiLambda} are standard BSDE respectively on $\R^{d \times d}$ and $\R$.
    \item [(iii)] The backward equation \eqref{eq : standardRiccati K} is independent of $\beta$,$\gamma$ and $\theta, G^A,G^C$, $G^Q$ and $G^H$ and the equation \eqref{eq : abstract Riccati barK} is independent of $\beta,\gamma$ and $\theta$.
    \item [(iv)] If all the model coefficients are set to be deterministic, the system of Equations \eqref{eq : standardRiccati K}, \eqref{eq : abstract Riccati barK}, \eqref{eq : linear_Riccati_Y_common_noise} and    \eqref{eq : RiccatiLambda} can be reduced to an ODE system of Riccati equations which will be clear from the proof of Proposition \ref{prop : fundamental_relation_common_noise}.
\end{itemize}

\end{Remark}

We prove a \emph{fundamental relation} which shows how to decompose the random cost functional $J(t,\boldsymbol{\xi},\balpha)$ defined in \eqref{eq:J_cost_functional} into a part independent with respect to  the control term $\alpha$ and a quadratic non-negative term.
\begin{Proposition}\textnormal{(Fundamental relation).}\label{prop : fundamental_relation_common_noise}
  Let $(K,Z^{K})$ be a solution of the standard Riccati equation \eqref{eq : standardRiccati K} on $[0,T]$, $(\bar{K},Z^{\bar{K}})$ be a solution to the abstract Riccati \eqref{eq : abstract Riccati barK} on $[0,T]$, $(Y,Z^{Y})$ be a  solution to  the linear BSRE equation \eqref{eq : linear_Riccati_Y_common_noise} on $[0,T]$ and $(\Lambda,Z^{\Lambda})$ a  solution to the linear equation \eqref{eq : RiccatiLambda} on $[0,T]$. Then, given $t \in [0,T]$ and $\boldsymbol{\xi} \in\Ic_t$, we have for every $\balpha \in \Ac$
\begin{align}\label{eq : fundamental_relation_common_noise}
         J(t,\boldsymbol{\xi},\balpha) &= \Vc(t,\boldsymbol{\xi}) +  \int_t^T  \int_I\E\Big[\Big\langle O^u_s\Big(\alpha^u_s + (O^u_s)^{-1}\big(U^u_sX^u_s + \int_I V_s(u,v)\bar X^v_s \d v + \Gamma_s^u \big)\Big),   \\
& \qquad \qquad  \qquad \qquad \quad \quad \alpha^u_s + (O^u_s)^{-1}\big(U^u_sX^u_s + \int_I V_s(u,v)\bar X^v_s \d v + \Gamma_s^u  \big)\Big\rangle \big | \tilde{\Fc}_t^0 \Big]\d u \d s \quad \P \text{-a.s},\notag
\end{align}
 where $\Vc$ is defined by
%for all $s\in [t,T]$,
%\begin{equation}
\begin{align} \label{eq : def_vxi}
%    \mathcal{V}_s & := \int_U\E\Big[ \langle X^u_s, K^u_sX^u_s\rangle \Big] \d u 
%    + \int_U\int_U\langle\bar X^u_s,\bar K_s(u,v)\bar X^v_s\rangle\d v \d u \\
%    &\qquad\qquad\qquad\qquad + \; 2 \int_U\E[\langle  Y^u_s,X^u_s\rangle]\d u + \int_U \Lambda^u_s\d u.
    \mathcal{V}(t,\boldsymbol{\xi}) & := \int_I\E \big[\langle \xi^u, K^u_t \xi^u \rangle | \tilde{\Fc}_t^0 \big]\d u 
    + \int_I \int_I \langle\bar \xi^u,\bar K_t(u,v)\bar \xi^v \rangle \d v \d u  + \; 2 \int_I \E \big[\langle  Y^u_t,\xi^u\rangle | \tilde{\Fc}_t^0\big] \d u + \int_I \Lambda^u_t\d u,
\end{align} 
with $\bar{\xi}^u$ defined according to \eqref{eq : conditional_expectation_Xu} as $\bar{\xi}^u= \E[\xi^u| B^0_{. \wedge t}]$.

\end{Proposition}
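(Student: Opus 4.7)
The proof is by Itô's formula combined with a completion-of-squares argument. I would introduce the auxiliary scalar process
\begin{align*}
\Vc_s := \int_I \langle X_s^u, K_s^u X_s^u\rangle \d u + \int_I \int_I \langle \bar X_s^u, \bar K_s(u,v) \bar X_s^v\rangle \d v \d u + 2\int_I \langle Y_s(u), X_s^u\rangle \d u + \int_I \Lambda_s^u \d u,
\end{align*}
$s\in[t,T]$. By the terminal conditions $K_T^u=H^u$, $\bar K_T=G^H$, $Y_T=0$, $\Lambda_T=0$, the terminal value $\Vc_T$ matches the terminal cost in \eqref{eq:J_cost_functional}, while $\Vc_t=\Vc(t,\boldsymbol{\xi})$ by definition \eqref{eq : def_vxi}. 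The plan is to compute $\d\Vc_s$, integrate over $[t,T]$, take $\E[\cdot\mid\tilde{\Fc}_t^0]$, and rearrange.

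Since $\beta,\gamma,\theta,A,C,B,D,G^A,G^C$ are all $\tilde{\F}^0$-adapted and the idiosyncratic Brownian motions $W^u$ are independent of $\tilde B^0$, conditioning \eqref{eq : state equations LQ common noise} on $\tilde{\F}^0$ yields
\begin{align*}
\d \bar X_s^u = \Big[\beta_s^u + A_s^u \bar X_s^u + \int_I G_s^A(u,v)\bar X_s^v \d v + B_s^u \bar\alpha_s^u\Big] \d s + \theta_s^u \d \tilde B_s^0, \qquad \bar\alpha_s^u := \E[\alpha_s^u \mid \tilde{\Fc}_s^0].
\end{align*}
I then apply Itô termwise: the standard product rule to $\langle X^u,K^u X^u\rangle$ and $\langle Y^u,X^u\rangle$, and the bilinear chain rule on $L^2(I\times I;\R^{d\times d})$ to $\int_I\int_I \langle \bar X_s^u,\bar K_s(u,v)\bar X_s^v\rangle \d u \d v$, producing in particular the cross-quadratic-variation term $\int_I\int_I\langle\theta_s^u,Z_s^{\bar K}(u,v)\theta_s^v\rangle\d u\d v$ coming from the common noise, and the term $2\int_I\langle\theta_s^u,Z_s^Y(u)\rangle \d u$ coming from $\d\langle Y(u),X^u\rangle$. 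Grouping all drift contributions, the purely quadratic-in-$X$ terms match against $\tilde\Phi^u$ from \eqref{eq : standardRiccati K}, the purely quadratic-in-$\bar X$ and mixed $(X,\bar X)$ terms match against $\Psi$ in the driver of $\bar K$, the affine terms $\beta,\gamma,\theta,I$ match against $\hat F$ and $\tilde L^u$, by the very definitions \eqref{eq : notations_common_noise_Kt_no_control_vol}, \eqref{eq : notations_barKt_common_noise_no_control_vol}, \eqref{eq : notations_common_noise_Yt_no_control_vol} and \eqref{eq : coeff_Lambda}.

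After these cancellations, what remains from the drift of $\d\Vc_s$ minus the running cost in \eqref{eq:J_cost_functional} is an algebraic expression in $\alpha_s^u$. A direct inspection shows that it is exactly completed into the perfect square
\begin{align*}
\int_I \Big\langle O_s^u\big(\alpha_s^u + (O_s^u)^{-1}(U_s^u X_s^u + \int_I V_s(u,v)\bar X_s^v\d v + \Gamma_s^u)\big),\ \alpha_s^u + (O_s^u)^{-1}(U_s^u X_s^u + \int_I V_s(u,v)\bar X_s^v\d v + \Gamma_s^u)\Big\rangle \d u,
\end{align*}
the ``subtracted'' quadratic $-\langle U^u X^u + \int V \bar X + \Gamma^u,(O^u)^{-1}(U^u X^u + \int V\bar X+\Gamma^u)\rangle$ being precisely the term encoded in $\tilde\Phi^u$, $F$, $\hat F$, $\tilde L^u$. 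Taking $\E[\cdot\mid\tilde{\Fc}_t^0]$ on both sides of the integrated identity kills the $W^u$- and $\tilde B^0$-stochastic integrals: by Assumption \ref{assumption : model coefficients}, the estimate \eqref{eq :estimate_X}, Remark \ref{rmk : essential_boundedness_K} and \eqref{eq : a_priori_estimate_control}, all integrands belong to $\H^2$, so the integrals are true martingales.

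The main obstacle is the algebraic bookkeeping in matching the drivers of the Riccati system with the cross terms produced by Itô. In particular, the quadratic-in-$\bar K$ term $-T_{V^*}\circ M_{(O)^{-1}}\circ T_V$ appearing in $F$ must cancel exactly the contribution generated by substituting $B^u\alpha_s^u \d s$ and $D_s^u\alpha_s^u \d W_s^u$ into the bilinear form $\int_I\int_I\langle\bar X^u,\bar K(u,v)\bar X^v\rangle\d u \d v$ under the completion of the square. Here the symmetry $\bar K^\star=\bar K$, the identity $T_{V_t^\star}=(T_{V_t})^\star$, and the symmetries $(G_s^Q)^\star=G_s^Q$, $(G^H)^\star=G^H$ adopted after Assumption \ref{assumption : cumulative_terminal_reward} are crucial in order to combine the mixed $(u,v)$ and $(v,u)$ contributions into the symmetric kernel $V_t$ that appears in the statement.
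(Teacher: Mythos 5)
Your proposal follows essentially the same route as the paper's Appendix~B argument: introduce the quadratic-plus-affine process built from $(K,\bar K,Y,\Lambda)$, apply Itô's formula, cancel the drift against the Riccati drivers $\tilde\Phi$, $F$, $\hat F$, $\tilde L$, complete the square using $O,U,V,\Gamma$, and kill the stochastic integrals by conditioning on $\tilde{\Fc}_t^0$. The only minor imprecision is that $\Vc_t$ as you define it is a random variable and the identity you want is $\E[\Vc_t\mid\tilde{\Fc}_t^0]=\Vc(t,\boldsymbol{\xi})$, not $\Vc_t=\Vc(t,\boldsymbol{\xi})$; this is exactly identity \eqref{eq : remark_computation} in the paper and does not affect the argument.
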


\begin{proof}
    The proof is postponed to Appendix \ref{appendix : fundamental_relation}.
\end{proof}

\subsection{Solvability of the infinite dimensional Backward stochastic Riccati system}

In this subsection, we are going to show the solvability of the triangular Riccati BSDE systems \eqref{eq : standardRiccati K}, \eqref{eq : abstract Riccati barK},\eqref{eq : linear_Riccati_Y_common_noise} and \eqref{eq : RiccatiLambda} which arise from our optimal control problem.

\subsubsection{Solvability of $K$}\label{subsec : solvability_K}

As mentioned in $(iii)$ in Remark \ref{rmk : system_backward_equations}, Equation \eqref{eq : standardRiccati K} is independent of $\beta,\gamma,\theta$ and of $G^A, G^C,G^Q$ and $G^H$ so we can assume without loss of generality these coefficients to be null.

\begin{Assumption}\label{assumption : solva_K}
Assume $C_t^u(\omega)=0$, $D_t^u(\omega)=0, \quad \lambda(\d u ) \otimes \P(\d \omega)-\text{a.s.}$, for every $t \in [0,T]$.   
\end{Assumption}
This  assumption is used to ensure existence and uniqueness of a solution to the standard BSRE $(K,Z^K)$ which is essentially bounded for $\lambda(\d u )$-\text{a.e.} following standard theory. We refer to \cite{pham2016lqcmkv} (Section 3.2) for several other assumptions which are known in the literature to ensure existence and uniqueness of $(K,Z^K)$ under different sets of assumptions. In any case, under Assumption \ref{assumption : solva_K}  combined with Assumptions \ref{assumption : model coefficients} and \ref{assumption : cumulative_terminal_reward},  it is known from \cite{TangGeneral2003}  that $(K^u,Z^{K^u})$ admits a unique solution  with $K^u$ being symmetric positive and essentially bounded for $a.e.$ $u \in I$.
\begin{Proposition}\label{prop : Ricatti_K}
Under the Assumptions  \ref{assumption : solva_K}, there exists a unique solution $(K,Z^K)$ in the sense of Definition \ref{def : definition_solution_K} which satisfies
\begin{align}\label{eq : uniform_bound_K}
        \underset{t \in [0,T]}{\sup} |K_t^u(\omega) | \leq C , \quad \d u \otimes \d \P\text{ a.e}.
\end{align}
\end{Proposition}

\begin{comment}
In this case, it is  known in the current literature that $V^u(t,x) = \langle x,K_t^u x \rangle $ with $K^u \in \S^2_{\tilde{\F}^0}(\R^{d \times d}) \cap L^{\infty}(\Omega;\Cc^{d \times d}_{[t,T]})$ and $Z^{K^u} \in \H^2_{\tilde{\F}^0}(\R^{d \times d})$ under suitable assumptions on the coefficients $b_t^u, 
    \sigma_t^u,f_t^u,R_t^u, g^u$  (see e.g. \cite[Section 5]{basei2019weak} for a discussion of the optimal form of the value function and    \cite[Section 3.2]{pham2016lqcmkv} for a discussion on the solvability).
\end{comment}

\begin{proof}
\noindent \textit{Step n°1 : }
To ensure existence and uniqueness of $(K^u,Z^{K^u}) \in \S^2_{\tilde{\F}^0}(\R^{d \times d}) \times \H^2_{\tilde{\F}^0}(\R^{\d \times d})$ for $a.e.$ $u \in I$, we can rely on standard theory by  considering the following  standard linear-quadratic control problem with common noise,
 \begin{align}
    V^u(t,x)= \underset{\alpha \in \Ac}{\text{ ess inf }} \E \Big[  \int_{t}^{T} f^u(\tilde{X}_s, \alpha_s) + g^u(\tilde{X}_T) | \tilde{\Fc}_t^0 \Big]  ,
    \end{align}
    with dynamics
    $$\d \tilde{X}_s^u = b_t^u(\tilde{X}_s,\alpha_s) \d s + \sigma_t^u(\tilde{X}_s, \alpha_s) \d W_s^u,$$
    and with the $\tilde{\F}^0$-adapted random coefficients defined by
    $b_t^u(x,a) := A_t^u x + B_t^u a,$ $ 
    \sigma_t^u(x,a) :=  C_t^u x + D_t^u a,$ $
        f_t^u(x,a) := \langle x, Q_t^u x \rangle +  \langle a, R_t^u a \rangle, $ $
        g^u(x,a) := \langle x , H^u x \rangle.$ 
    Under Assumption \ref{assumption : solva_K}, we know that it is possible to show that $K^u \in \S^2_{\tilde{\F}^0}(\S^d_{+}) \cap L^{\infty}(\Omega;\Cc^{d \times d}_{[t,T]})$  where the essential supremum depends on the model parameters. Therefore, since the model coefficients are essentially bounded in $u \in I$, it follows that there exists a constant $C > 0$ such that
    \begin{align}\label{eq : uniform_bound_K}
        \underset{t \in [0,T]}{\sup} |K_t^u(\omega) | \leq C , \quad \d u \otimes \d \P\text{ a.e.}.
    \end{align}
    Moreover, following \eqref{eq : uniform_bound_K}, we get after integrating with respect to $I$ that $\int_I \E \Big[ \underset{0 \leq t \leq T}{\sup} |K_t^u|^2 + \int_{0}^{T} |Z_t^{K^u}|^2 \d t \Big] \d u < + \infty $.
    Therefore, we just need to verify that $K$ and $Z^K$ admit suitable representations in terms of Borel maps.

    \noindent \textit{Step n°2 : } We now show that our candidate $(K,Z^{K}) \in  \Sc^2_{\tilde{\F}^0}(I ; \R^{d \times d}) \times \Hc^2_{\tilde{\F}^0}(I; \R^{d \times d})$ (recall Section \ref{sec:notations}). For this, we will rely on a Picard scheme iteration by showing that the iterates satisfy the corresponding representation and taking the limit in the spirit of \cite{kharroubi2025stochastic}. For this, from a Picard scheme for BSDEs and using \eqref{eq : uniform_bound_K}, we have that 
     \begin{align}\label{eq : convergence_picard_scheme}
          \E \Big[ \int_I \Big[\underset{t \leq s \leq T}{\sup} | K_s^u - K_s^{u,n}|^2 + \int_{t}^{T} | Z_s^{K^u} - Z_s^{K^u,n}|^2  \d s \Big] \d u \Big] \underset{n \to \infty}{\to}  0.
     \end{align}
    where we define for $n \geq 1$ 
    \begin{align}\label{eq : Picard_iterations_K}
            \d K_t^{u,n} &= - \tilde{\Phi}^{u,C}( t,K_t^{u,n-1}) \d t + Z_t^{K^u,n} \d W_t^0,  \quad 
    K_T^{u,n} = H^u,
    \end{align}
    with $K_s^{u,0} = 0$ and $Z_s^{K^u,0} = 0$  for every $s \in [t,T]$ and where we introduce for   $C > 0$ (from \eqref{eq : uniform_bound_K})  the projection map $k \in \S^d \to \pi_C(k) \in \Bc(C)$ where $\Bc(C) := \lbrace k \in \S^d : | k | \leq C \rbrace$ as 
$\pi_C(k)=k$ if $|k| \leq C$, and $\pi_C(k)=C \frac{k}{|k|}$ if $|k|>C$
and $\tilde{\Phi}^{u,C}(t,k) = \tilde{\Phi}^{u}(t, \pi_C(k))$ which is Lipschitz uniformly in time.
    We now prove by induction on $n \in \N$ that there exist $\R^{d \times d}$ Borel measurable maps $k^n$ and $z^n$ defined on $I \times [0,T] \times \Cc_{[0,T]}$ such that
\begin{align}
        K_t^{u,n} = k^n(u,t,\tilde{B}^0_{. \wedge t}), \quad Z_t^{K^u,n} = z^n(u,t,\tilde{B}^0_{. \wedge t}), \quad \text{$\d t \otimes \d u \otimes \d \P$ a.e. },
    \end{align}
 This is clear for $n=0$. Suppose now that it holds true for $n-1$.  From \eqref{eq : Picard_iterations_K}, we get existence and uniqueness of $(K^{u,n}, Z^{K^{u,n})} \in \S^2_{\tilde{\F}^0}(\R^{d \times d})  \cap \H^2_{\tilde{\F}^0}(\R^{d \times d})$ from standard theory and we know that
    \begin{align}
        K_t^{u,n} &= \E \Big[ H^u + \int_{t}^{T} \tilde{\Phi}^{u,C}(s,K_s^{u,n-1}) \d s  | \tilde{\Fc}_t^0 \Big] = \E \Big[ h^T(u,\tilde{B}^0_{. \wedge T})  | \tilde{\Fc}_t^0 \Big] + \E \Big[ \int_{t}^{T}  \mathfrak{J}^{n-1}(u,s,\tilde{B}^0_{. \wedge s}) \d s  | \tilde{\Fc}_t^0 \Big]
    \end{align}
    where $ I \times [0,T] \times \Cc_{[0,T]} \ni (u,s,y) \mapsto \mathfrak{J}^{n-1}(u,s,y)  := \tilde{\Phi}^{u,C}(s, k^{n-1}(u,s,y))$ is a Borel measurable map due to the induction hypothesis  on $k^{n-1}$ and the assumption on the model coefficients implying the measurability of the random map $(u,t,k) \mapsto \tilde{\Phi}^{u,C}(t,k)$ (Recall that $\tilde{\Phi}^u$ has been defined in \eqref{eq : def_tilde_phi_u}.) Now, following the same proof as  in \cite[Section 5.3]{kharroubi2025stochastic}, we show the required form for $K^{u,n}$ and $Z^{K^u,n}$. From \eqref{eq : convergence_picard_scheme} which implies convergence in the space $L^{2}(I \times \Omega ; \Cc^{d \times d}_{[0,T]}) \times   L^2(I \times [0,T] \times \Omega ; \R^{d \times d})$, we complete the proof by setting along a subsequence  the functions
\begin{align}
        k(u,t,\tilde{B}^0_{. \wedge t})= \underset{n \to \infty}{\text{ lim }} k^n(u,t,\tilde{B}^0_{. \wedge t}), \quad z(u,t,\tilde{B}^0_{. \wedge t})= \underset{n \to \infty}{\text{lim  } } z^n(u,t,\tilde{B}^0_{. \wedge t}), \quad \d u \otimes \d t \otimes \d \P\text{ a.e.}.
    \end{align}
    Therefore, the proof is completed.

\end{proof}

\subsubsection{Solvability of $\bar{K}$}
As mentioned in $(iii)$ in Remark \ref{rmk : system_backward_equations}, equation \eqref{eq : abstract Riccati barK} is independent of $\beta,\gamma,\theta$. Therefore, without loss of generality, we assume in this subsection  $\beta = \gamma = \theta=0$.

\begin{Proposition}\textnormal{(A priori estimates).}\label{prop : a_priori_estimate_barKt}
    Let $T_0 \in [0,T]$ and $(\bar{K},Z^{\bar{K}})$ be the unique solution of the  abstract BSDE Riccati \eqref{eq : abstract Riccati barK} on $(T_0, T ]$. Then, there exists a positive constant $C_T >0$ such that 
\begin{align}\label{eq : bound_operator_barKt}
         \underset{t \in (T_0,T]}{\sup} \Vert T_{\bar{K}_t} \rVert_{\Lc(L^2(I;\R^d))} \leq C_T \quad \text{ $\P$-a.s.},
     \end{align} 
\end{Proposition}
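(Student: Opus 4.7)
The plan is to use the fundamental relation \eqref{eq : fundamental_relation_common_noise} to sandwich the scalar quadratic form $\langle f, T_{\bar K_t} f\rangle_{L^2(I;\R^d)}$, evaluated at a deterministic $f \in L^2(I;\R^d)$ taken as initial condition at time $t$, between two bounds proportional to $\|f\|_{L^2}^2$, and to then derive the operator-norm estimate by self-adjointness of $T_{\bar K_t}$. The latter is inherited from the terminal symmetry $(G^H)^\star = G^H$, propagated backwards by the driver since $F(t,\bar k)^\star = F(t,\bar k)$ by Remark \ref{rmk : properties_F}(4). Because \eqref{eq : abstract Riccati barK} does not involve $\beta$, $\gamma$, $\theta$, I will set them together with $I$ equal to zero without loss of generality; by Remark \ref{rmk : system_backward_equations}(i) and the expressions in \eqref{eq : coeff_Lambda}--\eqref{eq : notations_common_noise_Yt_no_control_vol} this forces $(Y, Z^Y) \equiv 0$, $\Gamma \equiv 0$, $L \equiv 0$, and hence $\Lambda \equiv 0$.

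\textbf{Main steps.} I would first fix a deterministic $f \in L^2(I;\R^d)$ and take $\xi^u := f(u)$, which is admissible. With the simplifications above the functional $\Vc$ in \eqref{eq : def_vxi} collapses to
\begin{align*}
\Vc(t,f) = \int_I \langle f(u), K_t^u f(u)\rangle \d u + \langle f, T_{\bar K_t} f\rangle_{L^2(I;\R^d)}.
\end{align*}
For the upper bound, plug $\balpha = 0$ into \eqref{eq : fundamental_relation_common_noise}: the completed-square term is non-negative, and \eqref{eq : cost_functional_bound} gives $J(t,f,0) \leq C_T \|f\|_{L^2}^2$ a.s.\ (since $\beta = \gamma = \theta = 0$), whence $\Vc(t,f) \leq C_T \|f\|_{L^2}^2$ a.s. For the lower bound, plug in the feedback
\begin{align*}
\alpha_s^{\star,u} := -(O_s^u)^{-1}\Big(U_s^u X_s^{\star,u} + \int_I V_s(u,v) \bar X_s^{\star,v} \d v\Big),
\end{align*}
which annihilates the completed-square term, giving $\Vc(t,f) = J(t,f,\balpha^\star) \geq 0$ thanks to Remark \ref{rmk : conditional_functional_cost}(2). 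Combining both inequalities with the a.s.\ bound \eqref{eq : operator_bound_Kt} on $K$ then yields a constant $C_T > 0$ with
\begin{align*}
-C_T \|f\|_{L^2}^2 \leq \langle f, T_{\bar K_t} f\rangle_{L^2(I;\R^d)} \leq C_T \|f\|_{L^2}^2 \quad \text{a.s.}
\end{align*}
Self-adjointness and \eqref{eq: operator_norm_def} then yield $\|T_{\bar K_t}\|_{\Lc(L^2(I;\R^d))} \leq C_T$ a.s.\ for each fixed $t$. To obtain the uniform bound over $t \in (T_0,T]$ in the statement, I would use separability of $L^2(I;\R^d)$ to reduce the supremum in the definition of the operator norm to a countable one, together with pathwise continuity of $t \mapsto \bar K_t$ in $L^2(I \times I;\R^{d \times d})$ (inherited from $\bar K \in \Sc^2_{\tilde{\F}^0}$) combined with \eqref{eq : operator_norm_inequality}.

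\textbf{Main obstacle.} The delicate point is the admissibility of $\balpha^\star$, which depends on $\bar K_t$ through $V_s$ — the very object whose operator norm I am trying to control. I would circumvent this circularity by noting that, as a solution in the sense of Definition \ref{def : eq_abstracticatti barK}, $\bar K$ lives in $\Sc^2_{\tilde{\F}^0}(L^2(I \times I;\R^{d \times d}))$, so by \eqref{eq : operator_norm_inequality} the operator norm of $T_{\bar K_t}$ is a.s.\ finite (though possibly large); this is enough to set up the closed-loop linear state equation driven by $\balpha^\star$ and to verify that $\balpha^\star \in \Ac$, legitimising the use of the fundamental relation. The relation then upgrades this rough almost-sure finiteness to the deterministic uniform estimate $C_T$, in the spirit of the a priori bounds of \cite{guatteri2006backward}.
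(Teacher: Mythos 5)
Your proposal follows the same strategy as the paper's proof: fix a deterministic $\boldsymbol{\xi}$ with $\xi^u = f(u)$, sandwich $\langle f, T_{\bar K_t} f\rangle_{L^2(I;\R^d)}$ from above by taking $\balpha = 0$ in the fundamental relation together with \eqref{eq : cost_functional_bound}, and from below by taking the feedback that annihilates the completed square together with $J \geq 0$; then conclude via self-adjointness and \eqref{eq: operator_norm_def}. The paper normalizes $\beta = \gamma = \theta = 0$ (hence $Y \equiv 0$) but keeps $I$ and absorbs $\int_I \Lambda^u_t\,\d u$ into a constant $C_T'$ through Remark~\ref{rmk : system_backward_equations}(i); your extra normalization $I \equiv 0$ kills $\Lambda$ outright, which is fine since \eqref{eq : abstract Riccati barK} does not see $I$ either. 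Your passage from fixed-$t$ bounds to $\sup_{t\in(T_0,T]}$ via countable reduction and pathwise continuity is also exactly what is implicitly needed and omitted in the paper's terse proof.

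The one place you should be careful is the admissibility of $\balpha^\star$, which you rightly flag as delicate. Your proposed resolution — that $\bar K \in \Sc^2_{\tilde{\F}^0}(L^2(I\times I;\R^{d\times d}))$ gives a.s.\ finiteness of $\sup_t\|T_{\bar K_t}\|$ and that this suffices — is not quite airtight: Theorem~\ref{thm: existence_unicityX} requires the closed-loop kernel $G^A_s - B_s(O_s)^{-1}V_s$ to lie in $L^{\infty}(\Omega;\Cc([0,T];\Lc(L^2(I;\R^d))))$ per Assumption~\ref{assumption : model coefficients}, and a random variable being a.s.\ finite is weaker than it being essentially bounded. To make this rigorous one would localize (e.g.\ stop when $\|T_{\bar K_t}\|$ exceeds $N$, use truncated feedback controls, and let $N\to\infty$) or approximate by admissible controls driving the completed-square term to zero. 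The paper's proof glosses over the same point, so this is a remark on rigour rather than a divergence from the intended argument.
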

\begin{proof}
     Let $t \in \hspace{0.05 cm} (T_0,T]$.  It follows from the fundamental relation \eqref{eq : fundamental_relation_common_noise} and from \eqref{eq : cost_functional_bound}.

     Recalling Lemma \ref{lemma : prop_randommap_F},  the operator $T_{\bar{K}_t}$ is $\P-\text{a.s.}$ self-adjoint. Indeed, we shall show that $\bar{K}_t = \bar{K}_t^{\star}$ in $\S^2_{\tilde{\F}^0}\big(L^2(I \times I ; \R^{d \times d}) \big)$ such that we have that $\P-\text{a.s.}$
     \begin{align}
         \langle z, T_{\bar{K}_t}(y) \rangle_{L^2(I;\R^d)} &= \int_I \int_I  z(u)^{\top} \bar{K}_t(u,v) y(v) \d v \d u , = \int_I \int_I z(v)^{\top} \bar{K}_t(v,u) y(u) \d v \d u , \\
         &=\int_I \int_I z(v)^{\top} \bar{K}^{\star}_t(u,v)^{\top} y(u) \d v \d u, = \int_I \int_I z(v)^{\top} \bar{K}_t(u,v)^{\top} y(u) \d v \d u , = \langle y, T_{\bar{K}_t}(z) \rangle_{L^2(I;\R^d)}.
     \end{align}
     Now, since $\bar{K}$ satisfies the BSRE \eqref{eq : abstract Riccati barK}, we have that $\bar{K}^{\star}$ satisfies over $(T_0,T]$
     \begin{align}
         \d \bar{K}_t^{\star} = -F(t, \bar{K}_t)^{\star} \d t + (Z_t^{\bar{K}})^{\star} \d W_t^0.
     \end{align}
     Hence, Lemma \ref{lemma : prop_randommap_F}  gives
     \begin{align}
     \begin{cases}
         d( \bar{K}_t - \bar{K}_{t}^{\star}) &= \big(Z_t^{\bar{K}} - (Z_t^{\bar{K}})^{\star}\big) \d W_t^0, \quad t \in (T_0,T] \\
         \bar{K}_T - \bar{K}_T^{\star} &=0,
     \end{cases}
     \end{align}
     where the terminal condition follows from $(G^H)^{\star} = G^H$. Hence, $\bar{K}_t - \bar{K}_t^{\star} = \E \big[ \int_{t}^{T} \big(Z_s^{\bar{K}} - (Z_s^{\bar{K}})^{\star} \big) \d W_s^0 | \tilde{\Fc}_t^0 \big] =0, \quad \P-\text{a.s}$.  Therefore, the operator $T_{\bar{K}_t}$ is $\P-\text{a.s.}$ self adjoint,
       and we recall that its operator norm is given  by  \eqref{eq: operator_norm_def}. Moreover, recalling that $\langle \bar{\xi}, T_{\bar{K}_t} \bar{\xi}  \rangle_{L^2(I;\R^d)} = \int_I \int_I \langle \bar{\xi}^u, \bar{K}_t(u,v) \bar{\xi}^v \rangle \d u \d v  $ and Remark \ref{rmk : system_backward_equations}, we have using the non-negativity of the cost functional $J$ stated in Remark \ref{rmk : conditional_functional_cost}
     \begin{align}\label{eq : upper_bound_operator_norm}
        \langle \bar{\xi}, T_{\bar{K}_t} \bar{\xi} \rangle_{L^2(I;\R^d)} \leq J(t,\boldsymbol{\xi},0) +C_T' \leq C_T \int_I \E \big[ |\xi^u|^2  | \tilde{\Fc}_t^0\big ] \d u + C_T', \quad \P-\text{a.s.} .
     \end{align} 
     From the fundamental relation \eqref{eq : fundamental_relation_common_noise} by taking the optimal control setting the quadratic term to be null and recalling Remark \ref{rmk : system_backward_equations}, we have
     \begin{align}\label{eq : lower_bound_operator_norm}
          \langle \bar{\xi}, T_{\bar{K}_t} \bar{\xi} \rangle_{L^2(I;\R^d)} \geq - C_T  \int_I \E \big[ |\xi^u|^2  | \tilde{\Fc}_t^0\big ] \d u - C_T', \quad \P-\text{a.s.}.
     \end{align}
     Therefore, taking deterministic initial random variables $\boldsymbol{\xi}=(\xi^u)_{ \in I}$ such that $\lVert \xi \rVert_{L^2(I;\R^d)}^2 = 1$, noticing that  $\bar{\xi} = \xi$, it follows from \eqref{eq : upper_bound_operator_norm} and \eqref{eq : lower_bound_operator_norm} that
  $  \underset{ \lVert \xi \rVert_{L^2(I;\R^d)}=1}{\sup}\big |  \langle \xi, T_{\bar{K}_t} \xi \rangle_{L^2(I;\R^d)} \big | \leq C_T $, $ \P-\text{a.s.},$
    which is enough to conclude by \eqref{eq: operator_norm_def}.
\end{proof}
\begin{Proposition}\label{prop : existence_unicity_barK}
 There exists a unique solution $(\bar{K}, Z^{\bar{K}})$ to \eqref{eq : abstract Riccati barK} in the sense of Definition \ref{def : eq_abstracticatti barK}.
\end{Proposition}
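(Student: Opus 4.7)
The plan is to obtain $(\bar{K}, Z^{\bar{K}})$ by combining local-in-time existence and uniqueness with the a priori estimate from Proposition~\ref{prop : a_priori_estimate_barKt} through a patching argument, in the spirit of the well-posedness theory for backward stochastic Riccati equations developed in \cite{guatteri2006backward} and of the deterministic infinite-dimensional Riccati equation treated in \cite{de2025linear}. The driver $F$ from \eqref{eq : properties_functionF} is only quadratic rather than globally Lipschitz on $L^2(I\times I;\R^{d\times d})$, because of the term $V^{\star}\circ O^{-1}\circ V$, which precludes a direct application of the general BSDE theory on Hilbert spaces of \cite{confortola2007dissipative, fabbri2017stochastic}. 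The structural feature I shall exploit, highlighted in point~(2) of Remark~\ref{rmk : properties_F}, is that $F$ is locally Lipschitz in the $L^2$ norm with a Lipschitz constant depending only on the operator norms $\lVert T_{\bar{k}} \rVert_{\Lc(L^2(I;\R^d))}$ of its arguments.

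For the local step on an interval $[T-\delta, T]$, I consider the map $\Phi$ associating to $(\bar{K}', Z^{\bar{K}'}) \in \S^2_{\tilde{\F}^0} \times \H^2_{\tilde{\F}^0}$ the unique solution $(\bar{K}, Z^{\bar{K}})$ of the \emph{linear} Hilbert-valued BSDE with driver $t\mapsto F(t, \bar{K}'_t)$ and terminal value $G^H$; existence and uniqueness of this linear problem follow from classical theory together with points~(1) and~(2) of Remark~\ref{rmk : properties_F}. Restricting $\Phi$ to a suitable closed ball defined by a uniform $L^2$-bound (which, via \eqref{eq : operator_norm_inequality}, also controls the operator norm and hence the local Lipschitz constant of $F$), I would check that for $\delta$ sufficiently small $\Phi$ stabilises this ball and is a contraction in the natural $\S^2\cap\H^2$ norm. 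Banach's fixed point theorem then produces a unique local solution of \eqref{eq : abstract Riccati barK} on $[T-\delta, T]$.

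The global extension is carried out by backward iteration, driven by Proposition~\ref{prop : a_priori_estimate_barKt}: applied with $T_0 = T-\delta$ to the local solution just constructed, it yields the a priori bound $\underset{t \in (T-\delta, T]}{\sup} \lVert T_{\bar{K}_t} \rVert_{\Lc(L^2(I;\R^d))} \leq C_T$ $\P$-a.s., where $C_T$ is a universal constant. The crucial point is that the length of each subsequent local step must be arranged to depend only on this universal operator-norm bound and not on the random terminal value plugged in at that step, so that finitely many iterations cover $[0,T]$ and produce a global solution in the sense of Definition~\ref{def : eq_abstracticatti barK}. Uniqueness is obtained by the same circle of ideas: for two global solutions, Proposition~\ref{prop : a_priori_estimate_barKt} furnishes a common operator-norm bound, Remark~\ref{rmk : properties_F}(2) then yields a uniform Lipschitz estimate on $F$ along both trajectories, and an It\^o formula applied to the squared $L^2$-distance followed by Gronwall's lemma forces the two solutions to coincide.

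The main obstacle I anticipate is the coexistence of two inequivalent topologies throughout the argument: the fixed point producing local existence naturally lives in the $L^2$-based $\S^2\cap\H^2$ norm, whereas the only a priori estimate at our disposal, obtained via the fundamental relation of Proposition~\ref{prop : fundamental_relation_common_noise}, concerns the strictly weaker operator norm on $\Lc(L^2(I;\R^d))$. Reconciling the two requires a careful design of the patching procedure so that the length of each local step depends solely on the universal operator-norm bound $C_T$; this in turn relies on the inequality \eqref{eq : operator_norm_inequality} together with refined BSDE estimates in the spirit of \cite{guatteri2006backward} to ensure that the iterative construction does not collapse with shrinking step-sizes.
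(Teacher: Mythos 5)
Your overall plan (local contraction $+$ a priori estimate $+$ patching) matches the paper's, and you correctly isolate the key difficulty: the contraction lives in $\S^2(L^2(I\times I;\R^{d\times d}))$ while the only pathwise a priori estimate (Proposition~\ref{prop : a_priori_estimate_barKt}) controls the strictly weaker operator norm $\lVert T_{\bar K_t}\rVert_{\Lc(L^2(I;\R^d))}$. You also correctly state the requirement that the step length must depend only on that operator-norm bound. However, the local construction you actually propose does not satisfy this requirement, and the gap is not merely a technical refinement that could be supplied by ``refined BSDE estimates'': it is the crux of the proof.

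Specifically, you propose to run the fixed point on a closed ball defined by a uniform $L^2(I\times I;\R^{d\times d})$-bound, and invoke \eqref{eq : operator_norm_inequality} to transfer it to an operator-norm bound. But \eqref{eq : operator_norm_inequality} only goes one way: an $L^2$-bound gives an operator-norm bound, not conversely. When you restart the scheme on $[T-2\delta,T-\delta]$, the terminal datum is the random element $\bar K_{T-\delta}\in L^2(I\times I;\R^{d\times d})$. Proposition~\ref{prop : a_priori_estimate_barKt} gives a pathwise bound on $\lVert T_{\bar K_{T-\delta}}\rVert_{\Lc(L^2(I;\R^d))}$, but provides no pathwise $L^2$-bound on $\lVert\bar K_{T-\delta}\rVert_{L^2(I\times I;\R^{d\times d})}$ (membership in $\S^2_{\tilde\F^0}$ controls only expectations). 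Consequently there is no fixed $L^2$-radius that can hold all the intermediate terminal data, and your step length would have to shrink as the $L^2$-norm of the current terminal value grows—exactly the collapse you worry about in your last paragraph.

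The paper resolves this by \emph{not} defining the ball via an $L^2$-bound: the invariant set is
\[
\Bc(r,\delta)=\Big\{\bar K\in\S^2_{\tilde\F^0}(L^2(I\times I;\R^{d\times d}))\;:\;\sup_{t\in[T-\delta,T]}\lVert T_{\bar K_t}\rVert_{\Lc(L^2(I;\R^d))}\le r\ \P\text{-a.s.}\Big\},
\]
i.e.\ a pathwise \emph{operator-norm} ball. Step~1 (invariance) is then checked directly in the operator norm using Remark~\ref{rmk : properties_F}(1) and Jensen's inequality; Step~2 (contraction) is carried out in the $\S^2$-norm of $L^2(I\times I;\R^{d\times d})$, but the Lipschitz constant of $F$, by Remark~\ref{rmk : properties_F}(2), depends only on the operator norms, hence only on $r$ and $\delta$. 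This is precisely what makes $\delta$ a function of $r$ alone, so the a priori bound $r=C_T$ from Proposition~\ref{prop : a_priori_estimate_barKt} fixes a universal step length and the patching terminates in finitely many steps. Your uniqueness argument (It\^o on the squared $L^2$-distance plus Gronwall) is a workable alternative to the paper's fixed-point uniqueness, but it too needs the operator-norm bound to make the Lipschitz constant in Gronwall's lemma uniform, so it does not bypass the issue above.
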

\begin{proof}
     We will prove local existence and uniqueness of  a solution on a small interval $[T- \delta, T]$ for a $\delta > 0$ to be chosen later. Then, using   Proposition \ref{prop : a_priori_estimate_barKt}, we will iterate the proof to construct a global solution over $[0,T]$ by showing that $\delta$ can be taken uniformly since it only depends on $\bar{K}_t$ through its operator norm $T_{\bar{K}_t}$  Similar arguments have been used in \cite{guatteri2006backward,de2026linear}. 
 To ease notations, we denote the Hilbert space $H:=L^2(I \times I ; \R^{d \times d})$ and we introduce for $\delta \in [0,T] $  $, r \in \R^+$ (to be chosen later)
        $\Bc(r,\delta):=\Big\{\bar{K} \in S^2_{\tilde{\F}^0}(H), \ \sup_{t\in[T-\delta,T]}\|T_{\bar K_t}\|_{\Lc(L^2(I;\R^d))}\le r, \quad \P \text{-a.s}  \Big \rbrace,$
    where we use  the  norm $\lVert \bar{K} \rVert^2_{\Sc^2(H)} :=\E \Big[ \underset{ t \in [T- \delta, T]}{\sup}  \lVert \bar{K}_t \rVert^2_{H}\Big]$.
    \noindent On $\Bc(r,\delta)$, we  construct the map
\begin{align}
   \Phi : \Bc(r,\delta) \to \Bc(r,\delta),\quad  \Phi(\bar{K}) = \bar{P},\quad  \textit{where }  \bar{P}_t :=   \E \Big[ G^H + \int_t^T F(s, \bar{K}_s) \d s  |\tilde{\Fc}_t^0 \Big].
    \end{align}
    We now prove that $\Phi$ is a well-defined map and a contraction on $\Bc(r,\delta)$ for suitable choices of $\delta$ and $r$.

    \noindent \textbf{Step n°1 : $\Phi$ is a well-defined map. } We  need to prove  that $ \underset{t \in [T- \delta , T ]}{\sup}  \lVert T_{\bar{P}_t} \lVert_{\Lc(L^2(I;\R^d)} \leq r \quad \text{$\P$ a.s}.$ since it is clear that $\bar{P}\in \S^2_{\tilde{\F}^0}(H)$. Using  Jensen's inequality and  Lemma \ref{lemma : prop_randommap_F} (1), we have
\begin{align}\label{eq : bound_operatornorm_Pt}
\lVert T_{\bar{P}_t} \rVert_{\Lc(L^2(I;\R^d))} &\leq  \E \Big[ \lVert T_{G^H} \rVert_{\Lc(L^2(I;\R^d))} + \int_{t}^{T}  \lVert T_{F(s,\bar{K}_s)} \rVert_{\Lc(L^2(I;\R^d))} \d s | \tilde{\Fc}_t^0 \Big] \\
& \leq \E \Big[ \lVert T_{G^H} \rVert_{\Lc(L^2(I;\R^d))} + \int_{t}^{T}  C_T^1(1+ \lVert T_{\bar{K}_s} \rVert_{\Lc(L^2(I;\R^d))} + \lVert T_{\bar{K}_s} \rVert_{\Lc(L^2(I;\R^d))}^2 ) \d s |\tilde{\Fc}_t^0 \Big ] \\
&\leq M^H + \delta  C_T^1(1+ r + r^2) , \quad \P \text{-a.s},
\end{align}
where $M^H$ denotes the essential supremum over $\Omega$ of $\lVert T_{G^H} \rVert$. By Proposition \ref{prop : a_priori_estimate_barKt}, we have $M^H \leq C_T$
Taking the supremum on $[T- \delta, T]$ on \eqref{eq : bound_operatornorm_Pt} gives
    $\underset{t \in [T- \delta, T]}{\sup} \lVert T_{\bar{P}_t} \rVert_{\Lc(L^2(I;\R^d))}  \leq M^H + \delta  C_T^1(1+ r + r^2), $ $ \P \text{-a.s}.$
Taking $r > C_T \geq M_T $ and $\delta$  such that $C_T + \delta  C_T^1(1+ r + r^2) \leq r$ is enough to ensure that $\Phi( \Bc(r,\delta)) \subset \Bc(r,\delta)$.

    \noindent \textbf{Step n°2 : $\Phi$ is a contraction. }
    Let $\bar{K}_1,\bar{K}_2 \in  \Bc(r,\delta)$. We denote $\Delta \bar{K}_t := \bar{K}_t^1 - \bar{K}_t^2$ and $\Delta \bar{P}_t = \bar{P}_t^1 - \bar{P}_t^2$. 
    %By definition, we have
    %\begin{align}   \Delta \bar{P}_t = \E \Big[  \int_{t}^{T} F(s,\bar{K}_s^1 )- F(s,\bar{K}_s^2) \d s | \tilde{\Fc}_t^0 \Big]  \end{align}
     By definition and conditional Jensen's inequality, we have
     %\red{\begin{align}\lVert \Delta \bar{P}_t \rVert_{H}=\left\lVert\E \Big[  \int_{t}^{T} F(s,\bar{K}_s^1 )- F(s,\bar{K}_s^2) \d s| \tilde{\Fc}_t^0 \Big]  \right\rVert_{H} \leq \int_{t}^{T} \E \Big[ \lVert F(s,\bar{K}_s^1)- F(s,\bar{K}_s^2) \rVert_{H} |\tilde{\Fc}_t^0 \Big ] \d s\end{align}}
     \begin{align}\lVert \Delta \bar{P}_t \rVert_{\Sc^2(H)}=\left\lVert\E \Big[  \int_{t}^{T} F(s,\bar{K}_s^1 )- F(s,\bar{K}_s^2) \d s| \tilde{\Fc}_t^0 \Big]  \right\rVert_{\Sc^2(H)} \leq \mathbb E \left[ \underset{t \in [T- \delta, T]}{\sup}\int_{t}^{T} \E \Big[ \lVert F(s,\bar{K}_s^1)- F(s,\bar{K}_s^2) \rVert_{H} |\tilde{\Fc}_t^0 \Big ] \d s \right]\end{align}
    %By Jensen's conditional, we have \red{THERE IS CONDITIONAL EXPECTATION ON RHS?} \bl{Yes :}
    %\begin{align}
    %    \lVert \Delta \bar{P}_t \rVert_{H} \leq \int_{t}^{T} \E \Big[ \lVert F(s,\bar{K}_s^1)- F(s,\bar{K}_s^2) \rVert_{H} |\tilde{\Fc}_t^0 \Big ] \d s \end{align}
    By  Lemma \ref{lemma : prop_randommap_F} and since $\bar{K}^1, \bar{K}^2 \in \Bc(r)$, we have 
    \begin{align}
           \lVert F(s,\bar{K}_s^1)- F(s,\bar{K}_s^2) \rVert_{H} &\leq C\big( 1+  \lVert T_{\bar{K}^1_s} \rVert_{\Lc(L^2(I;\R^d))} +  \lVert T_{\bar{K}^2_s} \rVert_{\Lc(L^2(I;\R^d))} \big)  \lVert \bar{K}_s^1 - \bar{K}_s^2 \rVert_{H}, \leq C(1+2r)  \lVert \Delta \bar{K}_s \rVert_{H}, \quad \P \text{-a.s}.
    \end{align}
    %Therefore, we have\begin{align}\underset{t \in [T- \delta, T]}{\sup}\lVert \Delta \bar{P}_t \rVert_{H}^2  \leq \big(C(1+2r)\delta\big)^2 \underset{t \in [T- \delta, T]}{\sup}  \lVert \Delta \bar{K}_t \rVert_{H}^2, \quad \P \text{-a.s}. \end{align}
    It implies that
      \begin{align}
        \lVert \Delta \bar{P} \rVert_{\Sc^2(H)} \leq  C(1+2r) \delta  \lVert \Delta \bar{K} \rVert_{\Sc^2(H)}.
    \end{align}
    Therefore, taking $\delta$ such that $C(1+2r) \delta < 1$ together with the previous condition implies that $\Phi$ is a contraction on the complete metric space $(\Bc(r), \lVert \cdot \Vert)$ as a closed subset of the Banach space $(\S^2_{\tilde{\F}^0}(H); \lVert \cdot \rVert)$ which ensures therefore existence and uniqueness of a fixed-point  $\bar{K} \in \Bc(r)$ which satisfies
    \begin{align}
        \bar{K}_t = \E \Big[ G_H  + \int_{t}^{T} F(s,\bar{K}_s) \d s | \tilde{\Fc}_t^0 \Big], \text{ on $[T-\delta, T]$}.
    \end{align}
    Then, by \cite[Proposition 6.18]{fabbri2017stochastic}, the process $Z^{\bar{K}}$ is uniquely defined in $\H^2_{\tilde{\F}^0}(H)$ as the martingale decomposition applied to the martingale defined by
    \begin{align}
        M_t = \E \Big[ \bar{K}_t + \int_{0}^{t} F(s,\bar{K}_s) \d s  | \tilde{\Fc}_t^0 \Big].
    \end{align}
    Therefore, we have existence and uniqueness of $(\bar{K},Z^{\bar{K}})$ on $[T- \delta, T]$.

    Now, observing that $\delta$ only depends on $\bar{K}_t$ through $\lVert T_{\bar{K}_t} \rVert_{\Lc(L^2(I;\R^d))}$ (see \eqref{eq : bound_operatornorm_Pt}) for which we have the a priori estimate \eqref{eq : bound_operator_barKt}, we can iterate the proof on every interval of length $\delta$ given $r  > C_T$ from  \eqref{eq : bound_operator_barKt}  which provides global existence and uniqueness for $(\bar{K},Z^{\bar{K}})\in \Sc^2_{\tilde{\F}^0}(H) \times \H^2_{\tilde{\F}^0}(H)$ on $[0,T]$. 
    Moreover, observe that  $\P -a.s$, continuity of the map $[t,T] \ni s  \mapsto T_{\bar{K}_s}(\omega) \in \Lc(L^2(I;\R^d)) $ is obtained recalling \eqref{eq : operator_norm_inequality}  since $\bar{K}$ admits a continuous version.
\end{proof}
\subsubsection{Solvability of $Y$}
\begin{Proposition}
    There exists a unique solution $(Y,Z^{Y})$ of equation \eqref{eq : linear_Riccati_Y_common_noise} in the sense of Definition \ref{def : linear_Riccati_Y}.
\end{Proposition}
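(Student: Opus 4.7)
The plan is to recognize \eqref{eq : linear_Riccati_Y_common_noise} as a linear Lipschitz BSDE on the separable Hilbert space $H := L^2(I;\R^d)$ driven by the scalar common noise, and to apply the standard contraction-mapping approach to Lipschitz BSDEs on Hilbert spaces (see, e.g., \cite{fabbri2017stochastic}). The analysis is in fact considerably simpler than that of Proposition \ref{prop : existence_unicity_barK}, because the quadratic nonlinearity in $\bar K$ has disappeared and the driver is now affine in the unknown.

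First, I record the structure of the driver: by Remark \ref{rmk : terms_Y}(1)--(3), $\hat{F}$ admits the decomposition $\hat{F}(t,y) = A_t y + f_t$, where $(A_t)_{t \in [0,T]}$ is an $\tilde{\F}^0$-progressively measurable family of bounded linear operators on $H$ satisfying $\sup_{t \in [0,T]} \lVert A_t \rVert_{\Lc(H)} \leq C$ $\P$-a.s., and $f \in \H^2_{\tilde{\F}^0}(H)$. In particular, $\hat{F}(t,\cdot)$ is Lipschitz continuous in $y$ with a constant that is uniform in $(t,\omega)$, and $\hat{F}(\cdot,0) = f \in \H^2_{\tilde{\F}^0}(H)$.

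The core step is a Picard contraction on a short terminal interval. For $\delta > 0$ to be fixed (depending only on $C$), I define on the closed subspace of $\S^2_{\tilde{\F}^0}(H)$ consisting of processes restricted to $[T-\delta,T]$ the map
\begin{equation*}
   \Phi(Y')_t := \E\Big[\int_t^T \hat{F}(s, Y'_s) \d s \,\Big|\, \tilde{\Fc}_t^0\Big], \qquad t \in [T-\delta,T].
\end{equation*}
Conditional Jensen combined with Doob's inequality and the uniform Lipschitz bound on $A_t$ yields an estimate of the form $\lVert \Phi(Y^1) - \Phi(Y^2) \rVert_{\S^2}^2 \leq 4 C^2 \delta^2 \lVert Y^1 - Y^2 \rVert_{\S^2}^2$, so that for $\delta$ small $\Phi$ is a strict contraction. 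Its unique fixed point $Y$ on $[T-\delta,T]$, together with the martingale representation theorem on Hilbert spaces applied to $M_t := \E[\int_0^T \hat{F}(s, Y_s) \d s \mid \tilde{\Fc}_t^0]$ (as in \cite[Proposition 6.18]{fabbri2017stochastic}), produces a unique $Z^Y \in \H^2_{\tilde{\F}^0}(H)$ on the same interval.

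Since the Lipschitz constant $C$ is independent of the time variable, the step size $\delta$ does not depend on the interval endpoint either. The local solution is then pasted backwards over $\lceil T/\delta \rceil$ intervals of length $\delta$ to produce global existence and uniqueness of $(Y,Z^Y) \in \S^2_{\tilde{\F}^0}(H) \times \H^2_{\tilde{\F}^0}(H)$ on $[0,T]$. There is no substantial obstacle here: once Remark \ref{rmk : terms_Y} supplies the affine structure, the uniform Lipschitz bound and the $\H^2$ integrability of the source, the argument is exactly the standard Lipschitz BSDE theory transferred to the Hilbert space $H = L^2(I;\R^d)$.
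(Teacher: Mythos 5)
Your proof is correct and takes essentially the same approach as the paper: both use the affine decomposition $\hat F(t,y)=A_t y+f_t$ from Remark \ref{rmk : terms_Y}, the uniform bound on $\lVert A_t\rVert_{\Lc(L^2(I;\R^d))}$ to get the Lipschitz property, and the square-integrability of $f$ to reduce to standard Lipschitz BSDE theory on the separable Hilbert space $L^2(I;\R^d)$. The only difference is cosmetic: the paper cites \cite[Proposition 6.20]{fabbri2017stochastic} directly, while you unpack the Picard contraction and martingale-representation argument underlying that result.
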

\begin{proof}
    Following Remark \ref{rmk : terms_Y} and the previous results on $K$ and $\bar{K}$, we have
    \begin{enumerate}
        \item [$\bullet$] For any $t \in [0,T]$, we have $\lVert A_t \Vert_{\Lc(L^2(I;\R^d))} \leq C, \quad \P \text{-a.s}$
        for a uniform constant $C \geq 0$ which ensures the uniform-in-time Lipschitz property for the driver $\hat{F}$ in \eqref{eq : driver_tildeF_second_form}.
        \item [$\bullet$] $\E \Big[ \int_{0}^{T} \lVert f_t \rVert_{L^2(I;\R^d)}^2  \d t \Big] < + \infty$ and the integrability of the terminal condition.
  
    \end{enumerate}
Hence, the result follows from \cite[Proposition 6.20]{fabbri2017stochastic}.
\end{proof}
\subsection{Verification theorem}

The  fundamental relation, equation \eqref{eq : fundamental_relation_common_noise},  suggests us to consider the optimal control defined for  every $s \in [t,T]$ and for a.e. $u \in I$ in the following feedback form 
\begin{align}\label{eq :feedback_control_form_thm_common_noise}
    \alpha_s^u  = - (O_s^u)^{-1} \Big(U_s^u X_s^u + \int_I V_s(u,v)   \bar{X}_s^v  \d v + \Gamma_s^u  \Big), \quad  \P-\text{a.s.},  
\end{align}
where we consider the closed-loop SDE dynamics for $\bX=(X^u)_{u \in I}$ as
\begin{align}\label{eq : feedback_closed_loop_SDE_common_noise}
    dX_s^u &=  \Big[ \beta_s^u - B_s^u (O_s^u)^{-1} \Gamma_s^u  + \big(A_s^u - B_s^u (O_s^u)^{-1} U_s^u \big) X_s^u   + \int_I \big(G_s^A(u,v) - B_s^u (O_s^u)^{-1} V_s(u,v) \big) \bar{X}_s^v  \d v  \Big] \d s , \\
    &\quad+ \Big[  \gamma_s^u - B_s^u (O_s^u)^{-1} \Gamma_s^u  + \big(C_s^u - B_s^u (O_s^u)^{-1} U_s^u \big) X_s^u   + \int_I \big(G_s^C(u,v) - B_s^u (O_s^u)^{-1} V_s(u,v) \big) \bar{X}_s^v  \d v  \Big  ] \d W_s^u, \\
    & \quad +  \theta_s^u  \d \tilde{B}_s^0,
\end{align}
and 
$ X_t^u = \xi^u, u \in I.$ Since \eqref{eq : feedback_closed_loop_SDE_common_noise} is a linear mean-field SDE with common noise with coefficients satisfying Assumption \ref{assumption : model coefficients}, from Theorem \ref{thm: existence_unicityX} we have existence and uniqueness for $\bX$ and therefore admissibility for the feedback control \eqref{eq :feedback_control_form_thm_common_noise}. Indeed, the integrability condition follows from \eqref{eq : uniform_bound_K}, \eqref{prop : a_priori_estimate_barKt}, the assumption on the model coefficients and the existence and uniqueness of $\bX$ while the representation of $\alpha$ through a suitable Borel map follows from  the assumption on the model coefficients and by the representations of the solutions of $K,\bar{K}, Y, \Lambda$ (recall definitions of solutions). 
\begin{Theorem}\label{thm : optimal_control_form}
   Let $(K^u,Z^{K^u})$ be the unique solution of the standard Riccati BSDE equation \eqref{eq : standardRiccati K} on $[0,T]$, $(\bar{K},Z^{\bar{K}})$ be the unique solution of the abstract Riccati BSDE equation \eqref{eq : abstract Riccati barK} on $[0,T]$, $(Y,Z^{Y})$ be the unique solution to  \eqref{eq : linear_Riccati_Y_common_noise} on $[0,T]$ and $\Lambda$ be the unique solution to \eqref{eq : RiccatiLambda} on $[0,T]$. Then:
    \begin{enumerate}
        \item[(1)] The unique optimal control $\hat{\balpha}$ is in feedback form and is given for $a.e.$ $u \in I$ by \eqref{eq :feedback_control_form_thm_common_noise},  where $(X_s^u)_{t \leq s \leq T}$ is the unique solution to the closed-loop equation \eqref{eq : feedback_closed_loop_SDE_common_noise}
        \item [(2)] The value function $V(t,\boldsymbol{\xi}) := \underset{\balpha \in \Ac}{\text{ ess inf  }} J(t,\boldsymbol{\xi},\balpha)$ is given by 
        \begin{align}
            V(t,\boldsymbol{\xi} ) = \int_I \E \Big[\langle \xi^u, K^u_t \xi^u \rangle | \tilde{\Fc}_t^0 \Big]\d u 
    + \int_I \int_I  \E \Big[\langle\bar \xi^u,\bar K_t(u,v)\bar{\xi}^v \rangle \Big] \d v \d u  + \; 2 \int_I \E \Big[\langle  Y^u_t,\bar{\xi}^u \rangle \Big] \d u + \int_I \Lambda^u_t\d u, \quad \P \text{-a.s}.
        \end{align} 
    \end{enumerate}
\end{Theorem}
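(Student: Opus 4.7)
The plan is to apply the fundamental relation (Proposition \ref{prop : fundamental_relation_common_noise}) and exploit the positive definiteness of $O^u_s$ to conclude optimality. In detail, given $t\in[0,T]$ and $\boldsymbol{\xi}\in\Ic_t$, Proposition \ref{prop : fundamental_relation_common_noise} yields, for every $\balpha\in\Ac$,
\begin{align*}
    J(t,\boldsymbol{\xi},\balpha) = \Vc(t,\boldsymbol{\xi}) + \int_t^T \int_I \E\big[\langle O^u_s \Xi^u_s(\balpha),\, \Xi^u_s(\balpha)\rangle\,|\,\tilde{\Fc}_t^0\big]\,\d u\,\d s, \quad \P\text{-a.s.},
\end{align*}
where $\Xi^u_s(\balpha) := \alpha^u_s + (O^u_s)^{-1}\big(U^u_s X^u_s + \int_I V_s(u,v)\bar X^v_s\,\d v + \Gamma^u_s\big)$. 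Since Assumption \ref{assumption : cumulative_terminal_reward} and Remark \ref{remark:invO} ensure that $O^u_s\succeq cI_m$ uniformly in $(u,s,\omega)$, the integral on the right-hand side is non-negative, so that $J(t,\boldsymbol{\xi},\balpha)\ge \Vc(t,\boldsymbol{\xi})$ $\P$-a.s.~for every admissible $\balpha$, which gives $V(t,\boldsymbol{\xi})\ge \Vc(t,\boldsymbol{\xi})$.

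Next, the plan is to show that the feedback control $\hat{\balpha}$ defined by \eqref{eq :feedback_control_form_thm_common_noise}, with $\bX$ the solution of the closed-loop SDE \eqref{eq : feedback_closed_loop_SDE_common_noise}, makes the quadratic term vanish and is admissible. The admissibility step is where the argument is most delicate: we need to check that the coefficients appearing in \eqref{eq : feedback_closed_loop_SDE_common_noise} satisfy Assumption \ref{assumption : model coefficients}, so that Theorem \ref{thm: existence_unicityX} applies and yields a unique state $\bX\in\Sc_t$ admitting a representation of the form \eqref{eq : representation_X}. This relies on: (i) the uniform bound $|O^u_s|^{-1}\le M$ from Remark \ref{remark:invO}; (ii) the essential boundedness $|K^u_s|\le C$ from Remark \ref{rmk : essential_boundedness_K}; (iii) the uniform operator bound $\sup_{s}\|T_{\bar K_s}\|_{\Lc(L^2(I;\R^d))}\le C$ from Proposition \ref{prop : a_priori_estimate_barKt}; together with the submultiplicativity estimates \eqref{eq : sub_multiplicativity_operator_norms}--\eqref{eq : sub_multiplicativity_operator_norms_with_multiplication}, which ensure that the modified drift/diffusion coefficients in \eqref{eq : feedback_closed_loop_SDE_common_noise} still lie in the classes prescribed by Assumption \ref{assumption : model coefficients}. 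The joint measurability in $(u,\omega)$ of $K, \bar K, Y$ established in Section \ref{subsec : solvability_K} and the subsequent subsections guarantees that $\hat{\alpha}^u_s$ is of the form required by Definition \ref{def : admissible_conditions}~(2), hence $\hat{\balpha}\in\Ac$.

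Once admissibility is established, we plug $\hat{\balpha}$ into the fundamental relation. By construction, $\Xi^u_s(\hat{\balpha})=0$ $\d u\otimes\d s\otimes\d\P$-a.e., so that $J(t,\boldsymbol{\xi},\hat{\balpha}) = \Vc(t,\boldsymbol{\xi})$ $\P$-a.s., which combined with the lower bound derived above gives $V(t,\boldsymbol{\xi})=\Vc(t,\boldsymbol{\xi})$ and proves optimality of $\hat{\balpha}$. Taking conditional expectation under $\tilde{\Fc}_t^0$ in the definition of $\Vc$ and using the tower property yields the expression for $V(t,\boldsymbol{\xi})$ claimed in (2); note that $\bar\xi^u=\E[\xi^u|\tilde{\Fc}_t^0]$ and the $\tilde{\F}^0$-adaptedness of $\bar K_t,Y_t,\Lambda_t$ make the conditional expectations on the deterministic integrals trivial.

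For uniqueness of the optimal control, the plan is to observe that if $\balpha^*\in\Ac$ is any other minimizer, then the fundamental relation forces $\E\big[\int_t^T\int_I\langle O^u_s \Xi^u_s(\balpha^*),\Xi^u_s(\balpha^*)\rangle\,\d u\,\d s\,|\,\tilde{\Fc}_t^0\big]=0$ $\P$-a.s. Since $O^u_s\succeq cI_m$, this forces $\Xi^u_s(\balpha^*)=0$ $\d u\otimes\d s\otimes\d\P$-a.e., so $\balpha^*$ must satisfy the same feedback relation \eqref{eq :feedback_control_form_thm_common_noise} driven by its own state process. The uniqueness part of Theorem \ref{thm: existence_unicityX} applied to the closed-loop SDE \eqref{eq : feedback_closed_loop_SDE_common_noise} then yields $\balpha^*=\hat{\balpha}$, $\d u\otimes\d s\otimes\d\P$-a.e. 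The main obstacle in executing this program is the admissibility check, i.e.\ verifying that the feedback coefficients fit Assumption \ref{assumption : model coefficients} and that the closed-loop SDE falls within the scope of Theorem \ref{thm: existence_unicityX}; the rest of the argument is structural and follows from the completion-of-squares identity \eqref{eq : fundamental_relation_common_noise}.
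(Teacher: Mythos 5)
Your proof is correct and follows essentially the same route as the paper: apply the fundamental relation \eqref{eq : fundamental_relation_common_noise}, use positivity of $O^u_s$ to get the lower bound $J\geq\Vc$, and plug in the feedback control \eqref{eq :feedback_control_form_thm_common_noise} (admissible via Theorem \ref{thm: existence_unicityX} applied to the closed-loop SDE) to achieve equality and uniqueness. You simply spell out in more detail the admissibility verification and the uniqueness argument, which the paper states tersely in a couple of lines, relying on the preparatory remark that the closed-loop coefficients satisfy Assumption \ref{assumption : model coefficients}.
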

\begin{proof}
    From the fundamental relation \eqref{eq : fundamental_relation_common_noise}, it follows that
    \begin{enumerate}
        \item [$\bullet$] $J(t,\boldsymbol{\xi},\balpha) \geq \Vc(t,\boldsymbol{\xi})$ for any $\balpha \in \Ac$ where $\Vc$ is defined in \eqref{eq : def_vxi}.
        \item [$\bullet$] We have equality if $\balpha$ satisfies \eqref{eq :feedback_control_form_thm_common_noise} in order to set  the quadratic term in \eqref{eq : fundamental_relation_common_noise} equal to 0. Moreover, we notice that $\hat{\balpha} \in \Ac$ following Theorem \ref{thm: existence_unicityX} and existence and uniqueness of \eqref{eq : feedback_closed_loop_SDE_common_noise} ensure the optimality of a unique control given by $\hat{\balpha}$ defined in \eqref{eq :feedback_control_form_thm_common_noise} .
    \end{enumerate}
\end{proof}

\normalsize

\section{Applications}\label{sec:applications}
In this section, we apply the theory developed to solve two problems arising in mathematical finance.  
\subsection{Optimal trading with heterogeneous market participants}
%We extend the model introduced in \cite{almgren2000optimal} and discussed in \cite{frikha2025actorcritic} in the case of heterogeneous interactions and in presence of a common noise. Here, we want to notice that the market participants are acting cooperatively instead of the standard mean-field game where market participants act strategically in a Nash equilibrium. For instance, it can be the point of view of a chief leading traders in a bank and tries to minimize his aggregated cost.
We introduce an optimal trading problem under heterogeneous market participants in a cooperative setting with common noise (see the introduction for references on the subject).

In particular, we consider a large bank with traders indexed by $u \in I$ who act under the guidance of a lead trader. We assume that the inventory of each trader $u \in I$ is given by the following SDE
\begin{align}
        \d X_t^u =   \alpha_t^u   \d t + \sigma_t^u \d W_t^u + \sigma_t^{0,u}   \d \tilde{B}_t^0 ,  \quad
    X_0^u = \xi^u,
\end{align}
where $\boldsymbol{\xi}:=(\xi^u)_u$ is an admissible initial condition in the sense of Definition \ref{def : admissible_conditions} and we aim to minimize the following cost functional  given by 
\begin{align}
    J(t,\boldsymbol{\xi},\balpha)=   \int_I \E \Big[ \int_{t}^{T} (\alpha_s^u + P^u)^2  \d s + \lambda^u \langle X_T^u - \int_I \tilde{G}^{\lambda}(u,v) \bar{X}_T^v \d v  , X_T^u - \int_I \tilde{G}^{\lambda}(u,v) \bar{X}_T^v  \d v  \rangle | \tilde{\Fc}_t^0  \Big] \d u, 
\end{align}
where $P^u \geq 0$ and $\lambda^u \geq 0$ denote, respectively the transaction price and risk aversion parameter for agent $u$, and where $\tilde{G}^{\lambda} \in L^2(I \times I;\R)$ is a symmetric graphon modeling the interactions between traders $u$ and $v$. Moreover, $\sigma^u$ and $\sigma^{0,u}$ are $\tilde{\F}^0$-adapted processes that denote, respectively the idiosyncratic noise and a common noise on the inventory of the agent $u \in I$.
Following Remark \ref{rmk : alternative_formulation_centered}, we have 
\begin{align}
    J(t,\boldsymbol{\xi},\balpha) =  \int_I \E \Big[ \int_{t}^{T} (\alpha_s^u + P^u)^2  \d s + \langle X_T^u,\lambda^u X_T^u \rangle +  ( \bar{X}_T^u ,\int_I G^{\lambda}(u,v) \bar{X}_T^v)  |\tilde{\Fc}_t^0\Big] \d u, 
\end{align}
where we denote $G^{\lambda}(u,v) :=  \big(\tilde{G}^{\lambda}  \circ \lambda \circ \tilde{G}^{\lambda} \big)(u,v) - (\lambda^u + \lambda^v) \tilde{G}^{\lambda}(u,v)$. 

This models falls into the framework  of the non-exchangeable L-Q mean-field control problem with common noise \eqref{eq : state equations LQ common noise} - \eqref{eq:J_cost_functional} with $d=m=1$ and coefficients given for $t \in [0,T]$ and $u,v \in I$ by $\beta_t^u = 0,$ $A_t^u = 0,$ $G_t^A(u,v) = 0, $ $ B_t^u = 1, $ $\gamma_t^u = \gamma^u,$ $C_t^u = 0,  $ $ G_t^C(u,v) = 0, $ $D_t^u = 0,$  $\theta_t^u = \sigma^u,$ $ Q_t^u = 0,$ $G_t^Q(u,v) = 0,$ $R_t^u = 1,$ $I_t^u = P^u,$ $H^u = \lambda^u, $ $G^H(u,v)= G^{\lambda}(u,v).$

Following the notations introduced in \eqref{eq : notations_common_noise_Kt_no_control_vol}, \eqref{eq : notations_barKt_common_noise_no_control_vol}, \eqref{eq : notations_common_noise_Yt_no_control_vol} and \eqref{eq : coeff_Lambda}, we have
$\Phi_t^u = 0,$ $U_t^u= K_t^u, $ $O_t^u = 1,$ $\Psi_t(u,v) = 0,$ $V_t(u,v) = \bar{K}_t(u,v),$ $M_t(u)= Z_t^{K^u} \sigma^{0,u}  + \int_I Z_t^{\bar{K}}(u,v) \sigma^{0,v}  \d v - \int_I \kappa \tilde{G}^{\kappa}(v,u) Y_t(v) \d v $ $\Gamma_t^u = Y_t(u) + P^u,$ $
L_t^u = K_t^u \big( (\sigma^u)^2 + (\sigma^{0,u})^2\big) + \sigma^{0,u} Z_t^{Y}(u) + \int_I \sigma^{0,u} \sigma^{0,v} \bar{K}_t(u,v) \d v +(P^u)^2 - (Y_t(u) + P^u)^2.$
We assume that  Assumption \ref{assumption : model coefficients} is satisfied. Moreover, we notice that  Assumption \ref{assumption : solva_K} is directly satisfied. Therefore,  Theorem \eqref{thm : optimal_control_form} applies, that is, we obtain the optimal  value function  in the following quadratic form
    \begin{align} \label{eq : def_vxi optimal_trading}
    \Vc(t,\boldsymbol{\xi}) = \int_ I \E[ K_t^u (\xi^u)^2 |\tilde{\Fc}_t^0 ]\d u 
    + \int_ I\int_ I \bar \xi^u\bar K_t(u,v)\bar \xi^v \d v \d u \ + \; 2 \int_I\E[  Y^u_t\bar{\xi}^u] \d u + \int_U \Lambda^u_t\d u.
\end{align} 
where for  a.e. $u \in I$, the Backward Riccati equation for $K_t^u $ is given by 
\begin{align}
    d K_t^u = - \Big[ - (K_t^u)^2\Big] \d t + Z_t^{K^u} \d W_t^0, \quad t \in [0,T), \quad 
    K_T^u = \lambda^u.  
\end{align}
The Abstract Riccati BSDE on the Hilbert space $L^2(I \times I ;\R)$ for $\bar{K}$ is given by 
\begin{align}
    \d \bar{K}_t = - F(t,\bar{K}_t)\d t+ Z_t^{\bar{K}} \d W_t^0, \quad 
    \bar{K}_T = G^{\lambda},
\end{align}
where we have 
    $F(t,\bar{K_t})(u,v) = - (\bar{K}_t^* \circ \bar{K}_t)(u,v).$
The Abstract linear BSDE on the Hilbert space $L^2(I;\R)$ for $Y$ is given by 
\begin{align}
    Y_t = - \tilde{F}(t,Y_t)\d t+ Z_t^{Y} \d W_t^0, \quad 
    Y_T= 0,   
\end{align}
where we have
    $\tilde{F}(t,Y_t)(u) = M_t(u) - \int_I V_t(v,u) \Gamma_t^v \d v. $
Following \eqref{eq : lambda_representation_common_noise}, $\Lambda^u$ is given by
\begin{align}
    \Lambda_t^u =  \int_{t}^{T} \E \Big[  L_s^u  | \tilde{\Fc}_t^0 \Big] \d s, \quad t \in [0,T], \quad \text{a .e $u \in I$.}
\end{align}
Moreover, the optimal control is given according to Theorem \ref{thm : optimal_control_form} in feedback form by
\begin{align}
    \hat{\alpha}_s^u = - K_s^u X_s^u - \int_I \bar{K}_s(u,v) \bar{X}_s^v \d v - Y_s(u) - P_s^u .
\end{align}

\begin{Remark}
 Setting all the coefficients to be independent of $u \in I$, the graphon term $\tilde{G}^{\lambda}$ to be equal to 1 and assuming $\sigma^{0,u} = 0$ which implies no common noise, then we again recover the result in \cite{frikha2025actorcritic} for the derivation of optimal control after noticing that $Y=0$ and $K \equiv -\bar{K}$ in this specific setting.
\end{Remark}

%\red{WHERE DO WE WRITE THAT THE STANDARD RICCATI  IS SOLVABLE HERE WITH A-PRIORI EST THANKS TO \cite[Section 3.2]{pham2016lqcmkv}?}
\subsection{Systemic risk with heterogeneous banks}
Here, we introduce and solve a systemic risk model with heterogeneous banks under common noise (see the introduction for references on the subject). We consider a model of interbank borrowing and lending of a continuum of heterogeneous banks, with the log-monetary reserve of each bank $u \in I$ given by
\begin{align}
        \d X_t^u =  \Big[ \kappa (X_t^u - \int_I \tilde{G}^{\kappa} (u,v) \bar{X}_t^v   \d v ) + \alpha_t^u   \Big] \d t + \sigma_t^u \Big(  \sqrt{1- (p_t^u)^2}  \d W_t^u + \rho_t^u  \d \tilde{B}_t^0 \Big ),  \quad
    X_0^u = \xi^u.
\end{align}
where $\boldsymbol{\xi}:=(\xi^u)_u$ is an admissible initial condition in the sense of Definition \ref{def : admissible_conditions}, where $\kappa \leq 0$ is the rate of mean-reversion in the interaction from borrowing and lending between the banks, $\sigma^u$ is the $\tilde{\F}^0$-adapted volatility process of the $u$-labeled bank reserve and  $\tilde{G}^{\kappa}\in L^2(I \times I;\R)$ is a symmetric graphon modeling the interactions between banks $u$ and $v$. Here, there is also a common noise $\tilde{B}^0$ which affects bank $u$ through the term $\sigma^u \rho^u$, where $\rho^u$ is an $\tilde{\F}^0$-adapted process valued in $[-1,1]$ acting as a correlation parameter.
The aim of the central bank is now to mitigate systemic risk by minimizing the following aggregate conditional cost functional
\begin{align}
      \int_I \E \Big[ \int_{t}^{T} \Big( \eta^u \big(X_s^u - \int_I \tilde{G}^{\eta}(u,v) \E [ X^v_s |\tilde{\Fc}_s^0 ] \d v \big)^2  + (\alpha_s^u)^2\Big) \d s+ r^u \big( X_T^u  - \int_I  \tilde{G}^r(u,v) \E [ X^v_T |\tilde{\Fc}_T^0 ] \d v  \big)^2| \tilde{\Fc}_t^0 \Big] \d u.
\end{align}
where $\eta^u,r^u>0$ penalize deviations from the  weighted average, where $(\alpha_s^u)^2$ represents the cost of borrowing/lending from the central bank. By Remark \ref{rmk : alternative_formulation_centered}, we have
\begin{align}
    J(t,\boldsymbol{\xi},\balpha) := \int_I \E \Big[ \int_{t}^{T} \Big( \eta^u (X_s^u)^2 + \bar{X}_s^u \int_I G^{\eta}(u,v) \bar{X}_s^v \d v   + (\alpha_s^u)^2  \Big) \d s+ r^u (X_T^u)^2 +  \bar{X}_T^u \int_I  G^r(u,v) \bar{X}_T^v  \d v  | \tilde{\Fc}_t^0 \Big] \d u,
\end{align}
where we denote $G^{\eta}(u,v) :=  \big(\tilde{G}^{\eta}  \circ \eta \circ \tilde{G}^{\eta} \big)(u,v) - (\eta^u + \eta^v) \tilde{G}^{\eta}(u,v)$ and $G^{r}(u,v) :=  \big(\tilde{G}^{r}  \circ r \circ \tilde{G}^{r} \big)(u,v) - (r^u + r^v) \tilde{G}^{r}(u,v)$.
This model falls into the framework  of the non-exchangeable L-Q mean-field control problem with common noise \eqref{eq : state equations LQ common noise} - \eqref{eq:J_cost_functional}  with $d=m=1$ and coefficients given for $t \in [0,T]$ and $u,v \in I$ by
$\beta_t^u = 0,$ $A_t^u= \kappa$, $G_t^A(u,v)= -\kappa,$ $\tilde{G}^{\kappa}(u,v)$, $B_t^u = 1,$ $\gamma_t^u = \sigma^u \sqrt{1 - (\rho^u)^2}, $ $
C_t^u = 0,$ $G_t^C(u,v) = 0$ $D_t^u = 0,$ $\theta_t^u = \sigma^u \rho^u,$ $Q_t^u = \eta^u,$ $ G_t^Q(u,v) = G^{\eta}(u,v),$ $R_t^u = 1,$ $I_t^u=0,$ $H^u = r^u,$ $G^H(u,v) = G^{r}(u,v).$
Following the notations introduced in \eqref{eq : notations_common_noise_Kt_no_control_vol}, \eqref{eq : notations_barKt_common_noise_no_control_vol}, \eqref{eq : notations_common_noise_Yt_no_control_vol} and \eqref{eq : coeff_Lambda}, we have $\Phi_t^u = 2\kappa$  $K_t^u + \eta^u,$ $U_t^u = K_t^u,$ $O_t^u = 1,$ $\Psi_t(u,v) = -\kappa,$ $ K_t^u\, \tilde{G}_{\kappa}(u,v)
              - \kappa\, \tilde{G}_{\kappa}(v,u)\, K_t^v
              + 2\kappa\, \bar{K}_t(u,v) - \int_I \kappa\, \tilde{G}_{\kappa}(w,u)\, \bar{K}_t(w,v) \mathrm{d}w- \int_I \kappa\,\tilde{G}_{\kappa}(w,v)\, \bar{K}_t(u,w)\, \mathrm{d}w
        + G_{\eta}(u,v),$ 
$V_t(u,v) = \bar{K}_t(u,v),$ $M_t(u)= Z_t^{K^u} \sigma^u \rho^u + \int_I Z_t^{\bar{K}}(u,v) \sigma^v \rho^v \d v - \int_I \kappa \tilde{G}^{\kappa}(v,u) Y_t(v) \d v $ $\Gamma_t^u = Y_t(u),$ $L_t^u = (\sigma^u)^2 K_t^u + \sigma^u \rho^u Z_t^{Y}(u) + \int_I \sigma^u \sigma^v \rho^u \rho^v \bar{K}_t(u,v) \d v - (Y_t(u))^2$. We assume that  Assumption \ref{assumption : model coefficients} is satisfied. Moreover, we notice that  Assumption \ref{assumption : solva_K} is directly satisfied. Therefore,  Theorem \ref{thm : optimal_control_form} applies, that is, we obtain the optimal  value function  in the following quadratic form
    \begin{align} \label{eq : def_vxi systemic risk}
    \Vc(t,\boldsymbol{\xi}) = \int_ I \E[ K_t^u (\xi^u)^2 |\tilde{\Fc}_t^0 ]\d u 
    + \int_ I\int_ I \bar \xi^u\bar K_t(u,v)\bar \xi^v \d v \d u \ + \; 2 \int_I\E[  Y^u_t\bar{\xi}^u] \d u + \int_U \Lambda^u_t\d u.
\end{align} 
where for  a.e. $u \in I$, the Backward Riccati equation for $K_t^u $ is given by 
\begin{align}
    d K_t^u = - \Big[ 2 \kappa K_t^u + \eta^u - (K_t^u)^2\Big] \d t + Z_t^{K^u} \d W_t^0, \quad t \in [0,T) \quad 
    K_T^u = r^u.  
\end{align}
The Abstract Riccati BSDE on the Hilbert space $L^2(I \times I ;\R)$ for $\bar{K}$ is given by 
\begin{align}
    \d \bar{K}_t = - F(t,\bar{K}_t)\d t+ Z_t^{\bar{K}} \d W_t^0, \quad 
    \bar{K}_T = G^r,
\end{align}
where 
\begin{align}
    F(t,\bar{K_t})(u,v) &= - \kappa K_t^u \tilde{G}^{\kappa}(u,v) - \kappa \tilde{G}^{\kappa}(v,u) K_t^v + 2 \kappa \bar{K}_t(u,v) - \kappa (\tilde{G}^{\kappa} \circ \bar{K}_t)(u,v) - \kappa({\bar{K}}_t^* \circ \tilde{G}^{\kappa})(u,v)  \\
    &\quad + G^{\eta}(u,v) - (\bar{K}_t^* \circ \bar{K}_t)(u,v).
\end{align}
The Abstract linear BSDE on the Hilbert space $L^2(I;\R)$ for $Y$ is given by 
\begin{align}
    Y_t = - \tilde{F}(t,Y_t)\d t+ Z_t^{Y} \d W_t^0, \quad 
    Y_T= 0, 
\end{align}
where we have
\begin{align}
    \tilde{F}(t,Y_t)(u) = Z_t^{K^u} \sigma^u \rho^u + \int_I Z_t^{\bar{K}}(u,v) \sigma^v \rho^v \d v - \int_I \kappa \tilde{G}^{\kappa}(v,u) Y_t(v) \d v -  \int_I V_t(v,u) Y_t(v) \d v.
\end{align} 
Following \eqref{eq : lambda_representation_common_noise}, $\Lambda^u$ is given by
\begin{align}
    \Lambda_t^u =  \int_{t}^{T} \E \Big[ (\sigma^u)^2 K_s^u + \sigma^u \rho^u Z_s^Y(u) + \int_I \sigma^u \sigma^v \rho^u \rho^v  \bar{K}_s(u,v) \d v  | \tilde{\Fc}_t^0 \Big] \d s, \quad t \in [0,T], \quad \text{a .e $u \in I$.}
\end{align}
Moreover, the optimal control is given according to Theorem \ref{thm : optimal_control_form} in feedback form as
\begin{align}
    \hat{\alpha}_s^u = - K_s^u  X_s^u    - \int_I   \bar{K}_s(u,v)  \bar{X}_s^v \d v   - Y_s(u).
\end{align}
\begin{Remark}   Setting all the coefficients to be independent of $u \in I$, and the graphon terms $\tilde{G}^{\kappa}, \tilde{G}^{\eta}$ and $\tilde{G}^{r}$ to be equal to 1, we recover after some straightforward computations the result in \cite{pham2017dynamic} (Section 5).
\end{Remark}
\appendix
\section{Proof of Theorem \ref{thm: existence_unicityX}}\label{appendix: proof_existence_unicityX}

    Fix $t \in [0,T]$, $\boldsymbol{\xi} \in \Ic_t$ and $\boldsymbol{\alpha} \in \Ac$. We define the map $\Phi$ on the space $\Sc^2_{\tilde{\F}^0}(I;\R^d)$ (which is Banach by standard arguments) as follows. For every $\bY:=(Y^u)_{u \in I} \in \Sc^2_{\tilde{\F}^0}(I;\R^d)$, we define
    \begin{align}
        \Phi(\bY) = \bar{X}^{\bY} =(\bar{X}^{\bY,u})_{u \in I}, \quad \bar{X}_s^{\bY,u} := \E[X_s^{\bY,u} | \tilde{B}^0_{. \wedge s}], \quad s \in [t,T].
    \end{align}
where for every $u \in I$, $X^{\bY,u}$ is the solution to the standard SDE with common noise
    \begin{align}\label{eq : state equations LQ common noise proof}
\begin{cases}
    &\d X^{\bY,u}_s =\left [ \beta_s^u + A_s^u X_s^{\bY,u} +  \int_I G^A_s(u,v) Y_s^v  \d v + B_s^u \alpha^u_s \right] \d s \\
    &\qquad\qquad +\left [\gamma_s^u + C_s^u  X^{\bY,u}_s +    \int_I G^C_s(u,v)  Y_s^v\d v + D^u_s \alpha^u_s \right ] \d W^u_s +\theta^u_s  \d \tilde{B}_s^0,\quad s\in[t,T]\\ 
    & X_t^u=\xi^u, \qquad u \in U, 
\end{cases}
\end{align}
\paragraph{Step 1 :}
We first show that $\Phi$ is a well-defined map.
Assuming for now that $u \to \P_{(X^{\bY,u},W^u, \tilde{B}^0,R)} \in \Pc( \Cc^d_{[t,T]} \times \Cc_{[0,T]} \times \Cc_{[0,T]} \times \mathcal{K}) $ is measurable, then it implies the measurability of the map $u \mapsto \P_{(X^{\bY,u},W^u, \tilde{B}^0,R, \alpha^u)}$ and from standard estimates on Equation \eqref{eq : state equations LQ common noise}, we have
\begin{align}\label{eq : integrability_condition_X_appendix}
    \E \Big[ \underset{s \in [t,T]}{\sup} |X_s^{\bY,u}|^2 \Big] \leq C_T \Big( 1 + \E \big[ | \xi^u|^2 \big] + \int_{t}^{T} \E \big[ |\alpha_s^u|^2 \big ] \d s + \lVert \bY \rVert_{\Sc^2_{\tilde{\F}^0}(I;\R^d)}^2 \Big)< \infty,
\end{align}
where we have used  \eqref{eq : hypothesis initial conditions} and \eqref{eq : hypothesis admissible control}. By  Jensen's inequality,  $\int_I\E \Big[ \underset{s \in [t,T]}{\sup} |\bar{X}_s^{\bY,u}|^2 \Big] du \leq \int_I \E \Big[ \underset{s \in [t,T]}{\sup} |X_s^{\bY,u}|^2 \Big]du< \infty$.

We now show that $\Phi$ is a well-defined map. Let $\bY \in  \Sc_{\tilde{\F}^0}^2(I;\R^d)$. As $\bY \in \Sc^2_{\tilde{\F}^0}(I;\R^d)$ and from the assumptions on the model coefficients, we see that for fixed $(u,\omega) \in I \times \Omega$, the  mapping
$I \ni v \mapsto G_s(u,v)(\omega)Y_s^v(\omega),$
is measurable for every $s \in [t,T]$ and for $G \in \lbrace G^A,G^C \rbrace$. 
Now, from Picard iterations and assumption on model coefficients, we have 
\begin{align}\label{eq : picard to 0}
     \underset{u \in I}{\text{ess sup }} \E \Big[ \underset{s \in [t,T]}{\sup} \big| X_s^{\bY,u} - X_s^{\bY,u,n} \big|^2 \Big] \underset{ n \to \infty}{\to} 0,
\end{align}
where we define $X_s^{\bY,u,0} = \xi^u$ for all $s \in [t,T]$ and 
\begin{align}\label{eq : dynamics picard iterations}
    & X^{\bY,u,n}_s = \xi^u + \int_{t}^{s} \Big[\beta_s^u + A_s^u X_r^{\bY,u,n-1} +  \int_I G_s^A(u,v) Y_r^v  \d v + B_s^u \alpha^u_r \Big]  \d r \\
    &\qquad\qquad + \int_{t}^s\Big [\gamma_s^u + C_s^u  X^{\bY,u,n-1}_r +    \int_U G_{s}^C(u,v)  Y_r^v\d v + D_s^u \alpha^u_r \Big ] \d W^u_r+ \int_{t}^s \theta_s^u  \d \tilde{B}_r^0,\quad s\in[t,T]
\end{align}
and $\bar{X}_s^{\bY,u,n} = \E[ X_s^{\bY,u,n} | \tilde{B}^0_{. \wedge s}]$ for every $n \in \N^*$. Now from conditional Jensen's inequality we have
%\begin{align}   \E \Big[ \underset{s \in [t,T]}{\sup} \E \big[ |X_s^{\bY,u} - X_s^{Y,u,n} |^2 | \tilde{B}^0_{. \wedge s} \Big] \leq \E \Big[ \underset{s \in [t,T]}{\sup} | X_s^{\bY,u} - X_s^{Y,u,n} |^2 \Big],\end{align}it implies that 
\begin{align}\label{eq : Picard iteration Xbar}
    \underset{u \in I}{\text{ess sup }} \E \Big[ \underset{s \in [t,T]}{\sup} \big| \bar{X}_s^{\bY,u} - \bar{X}_s^{\bY,u,n}     \big|^2 \Big]
    %= \underset{u \in I}{\text{ess sup }}  \E \Big[ \underset{s \in [t,T]}{\sup} \E \big[ |X_s^{\bY,u} - X_s^{Y,u,n} |^2 | \tilde{B}^0_{. \wedge s} \Big] 
    \leq \underset{u \in I}{\text{ess sup }} \E \Big[ \underset{s \in [t,T]}{\sup} | X_s^{\bY,u} - X_s^{\bY,u,n} |^2 \Big] \underset{n \to \infty}{\to} 0.
\end{align}

 We now show by induction that there exists Borel measurable functions $\phi^n : I \times [t,T] \times \Cc_{[0,T]} \to \R^d$ such that
 $$\bar{X}_s^{\bY,u,n} = \phi^{n}(u,s,\tilde{B}^0_{. \wedge s}), \quad \d u \otimes \d \P \text{ a.e.}, \forall s \in [t,T].$$
Let $n =0$. By construction, we have $X_s^{\bY,u,0} = \xi^u$ for every $s \in [t,T]$. From the definition of the conditional expectation, we can write
$    \E[\xi^u | \tilde{B}^0_{. \wedge s} = y] = \int_{\R^d} x \d \P_{\xi^u | \tilde{B}^0_{. \wedge s} = y}(\d x).$
However, the function $(u,s,y) \mapsto \P_{\xi^u | \tilde{B}^0_{. \wedge s}= y} \in \Pc(\R^d)$ is jointly measurable  thanks to  \eqref{eq : hypothesis initial conditions}. Indeed, defining the Borel map $\tilde{\xi}$ as  
$\tilde{\xi}(u,t,\omega,\omega^0,z) = (\xi(u,t,\omega_{. \wedge t},\omega^0_{. \wedge t},z), \omega^0),$
where $\xi$ is the Borel map defined in  \eqref{eq : hypothesis initial conditions},
 then denoting by $m$ the law of $U$, we have  $\P_{(\xi^u,\tilde{B}^0)} = \tilde{\xi}(u,t,\cdot,\cdot,\cdot)_{\sharp} \big(\W_T \otimes \W_T \otimes m\big)$  which is clearly measurable by assumption on $\xi$. Now, since the mapping $\pi : [0,T] \times \R^d \times \Cc_{[0,T]} \ni (s,x,b) \mapsto (x,b_{. \wedge s}) $ is continuous hence measurable, the map $(u,s) \mapsto \P_{(\xi^u, \tilde{B}^0_{. \wedge s})} = \pi(s,\cdot,\cdot)_{\sharp} \P_{(\xi^u,\tilde{B}^0)}$ is measurable  which gives the result by Remark \ref{rmk : well_posedness}.

We now show a suitable representation for $n \in \N^*$. Suppose it holds true for $n-1$. From the definition \eqref{eq : dynamics picard iterations}, it follows
\begin{align}\label{eq : relation for barX}
    \bar{X}_s^{\bY,u,n} &= \bar{\xi^u} + \E \Big[ \int_{t}^s \big[ \beta^u_r + A^u_r X_r^{\bY,u,n-1} +  \int_I G_{r}^A(u,v) Y_r^v  \d v + B^u_r \alpha^u_r \big]  \d r  \big | \tilde{B}_{. \wedge s}^0 \Big  ] +  \E \Big[ \int_{t}^s \theta_r^u  \d \tilde{B}^0_r  \big | \tilde{B}^0_{. \wedge s}\Big  ]
\end{align}
In the spirit of the proof of  \cite[Theorem 2.1]{kharroubi2025stochastic}, we notice  by a simple induction argument and from \cite[Lemma 10.1]{rogers2000diffusions} that there exists a measurable function $\psi^{\bY,n} : I \times [0,T]   \times \Cc_{[0,T]} \times \Cc_{[0,T]} \times \mathcal{K}$ such that for a.e. $u \in I$, we have
$X^{\bY,u,n}_s= \psi^{\bY,n}(u,s, W^u_{. \wedge s}, \tilde{B}^0_{. \wedge s},R),$ $  \P-\text{ a.s },$
for every $s \in [t,T]$. Combined with \eqref{eq : picard to 0}, we get a Borel map $\psi^{\bY} : I \times [0,T] \times \Cc_{[0,T]} \times \Cc_{[0,T]} \times\mathcal{K}$ by setting $\psi^{\bY} = \underset{n \to \infty}{\lim} \psi^{\bY,n}$ such that for a.e. $u \in I$, we have $X_s^{\bY,u} = \psi^{\bY}(u,s,W^u_{. \wedge s}, \tilde{B}^0_{. \wedge s},R), $ $ \P-\text{a.s.}$ which also gives the measurability of the map $u \to \P_{(X^{\bY,u},W^u, \tilde{B}^0,R)}$.

\noindent Moreover, since $\balpha \in \Ac$,  $\bY \in \Sc^2_{\tilde{\F}^0}(I;\R^d)$, and from the assumptions on the model coefficients, we notice that
\begin{align}
    \int_{t}^{s} \big[ \beta^u_r + A^u_r X_r^{\bY,u,n-1} +  \int_I G_{  A}(u,v) Y_r^v  \d v + B^u \alpha^u_r \big]  \d r = \tilde{\phi}^{1,n}(u,s, W^u_{. \wedge s}, \tilde{B}^0_{. \wedge s},R),
\end{align}
for a Borel measurable function $\tilde{\phi}^{1,n}$ and similarly for the stochastic integral term  which we  denote by $\tilde{\phi}^{2,n}$. Now,  again from the definition of the conditional expectation, we have  
\begin{align}
    \E \Big[\tilde{\phi}^{1,n}(u,s, W^u_{. \wedge s}, \tilde{B}^0_{. \wedge s},R) \big | \tilde{B}_{. \wedge s}^0 = y \Big] = \int_{\Cc_{[0,T]} \times \Cc_{[0,T]} \times \Kc} \tilde{\phi}^{1,n}(u,s,\omega_{. \wedge s},\omega^0_{. \wedge s},z) \P_{(W^u_{[0,T]}, \tilde{B}^0_{[0,T]},R)| \tilde{B}^0_{. \wedge s}= y}(\d \omega, \d \omega^0,\d z )
\end{align}
Now, again, we state that there exists a measurable version of  the map $(u,s,y) \mapsto \P_{(W^u, \tilde{B}^0,R)| \tilde{B}^0_{. \wedge s} = y}$. Indeed, the projection mapping $\pi :[t,T] \times \Cc_{[0,T]} \times \Cc_{[0,T]} \times\mathcal{K}  \ni (s,w,b,z) \mapsto (w,b,b_{. \wedge s},z)$ is continuous, hence measurable and therefore  the map $(u,s) \mapsto \P_{(W^u, \tilde{B}^0, \tilde{B}^0_{. \wedge s},R)}= \pi(s,\cdot,\cdot,\cdot)_{\sharp} \P_{(W^u,\tilde{B}^0,R)}$ is measurable. Therefore, since $\tilde{\phi}^{1,n}$ is Borel measurable and by Remark \ref{rmk : well_posedness}, we therefore have
a measurable version of the map
$(u,s,y) \mapsto \phi^{1,n}(u,s,y) = \E \Big[\tilde{\phi}^{1,n}(u,s, W^u_{. \wedge s}, \tilde{B}^0_{. \wedge s},R) \big | \tilde{B}_{. \wedge s}^0 = y \Big]$.
Since similar arguments holds for $\tilde{\phi}^{2,n}$, by defining $\phi^n = \bar{\xi}^u + \phi^{1,n} + \phi^{2,n}$, we therefore have
\begin{align}
    \bar{X}_s^{\bY,u,n} = \phi^{n}(u,s,\tilde{B}^0_{. \wedge s}), \quad \d u \otimes \d \P \text{ a.e.},
\end{align}
for every $s \in [t,T]$.
Now, equation \eqref{eq : Picard iteration Xbar} is enough to conclude for the representation of $\bar{X}^{\bY}$. Therefore, it shows that $\Phi$ is well-defined.
\begin{comment}
    similarly to the proof of Lemma \ref{lemma : Banach_space}.
\end{comment}

\noindent \textbf{Step 2  : } We now prove that the mapping $\Phi$ has a unique fixed-point. Let $\bY,\bZ \in \Sc^2_{\tilde{\F}^0}(I;\R^d)$. From standard computations, we have for a positive constant $C_T$ independent of $u \in I$ that 
\begin{align}
    \underset{s \in [t,r]}{\sup}\big |\bar{X}_s^{\bY,u} - \bar{X}_q^{\bZ,u} \big |^2 &\leq  C_T\int_{t}^{r} \Big[ \underset{t \leq q \leq l}{\sup}\big|A_q^u (\bar{X}_q^{\bY,u} - \bar{X}_q^{\bZ,u}) \big|^2 + \underset{t \leq q \leq l}{\sup}\big | \int_I G_q^{A}(u,v) (Y_q^v - Z_q^v) \d v \big|^2  \Big]  \d l. 
    &\quad 
\end{align}
Moreover, notice that 
\begin{align}
    \E &\Big[ \int_{t}^{r} \underset{t \leq q \leq l}{\sup}\int_I \Big | \int_I G_q^{A}(u,v) (Y_q^v - Z_q^v) \d v \Big |^2 \d u \d l \Big] = \E \Big[ \int_{t}^{r} \underset{t \leq q \leq l}{\sup} \int_I|T_{G_l^A}(Y_l-Z_l)(u)|^2 \d u \d l \Big] \\&= \E \Big[ \int_{t}^{r}\underset{t \leq q \leq l}{\sup}| T_{G_r^A}(Y_r-Z_r) |^2_{L^2(I;\R^d)} \d l \Big] \leq \E \Big[\underset{t \leq s \leq r}{\sup}\lVert T_{G_s^A} \rVert_{\Lc(L^2(I;\R^d))}^2  \int_{t}^{r}\int_I \underset{t \leq l \leq s}{\sup}|Y_l^u - Z_l^u|^2 \d u \d s  \Big], \\
    &\leq C_T \int_{t}^{r} \E \Big[\int_I   \underset{ t \leq l \leq  s}{\sup} | Y_l^u - Z_l^u |^2 \d u \Big] \d s,
\end{align}
where we used in the last inequality the assumption on $\rVert T_{G^A_{\cdot}} \rVert_{\Lc(L^2(I;\R^d))}$. Now, integrating with respect to $I$, taking the expectation and using the assumption on model coefficients,
\begin{align}
     \E \Big[ \int_I\underset{s \in [t,r]}{\sup}\big |\bar{X}_s^{\bY,u} - \bar{X}_q^{\bZ,u} \big |^2 \d u \Big] \leq C_T \int_{t}^{r} \E \Big[ \int_I \underset{t \leq q \leq l }{\sup} |\bar{X}_q^{\bY,u} - \bar{X}_q^{\bZ,u} |^2 \d u  + \int_I   \underset{ t \leq q \leq  l}{\sup} | Y_q^u - Z_q^u |^2 \d u \Big] \d l 
\end{align}
Applying Grönwall's Lemma to the function $r \to \E \Big[ \int_I\underset{s \in [t,r]}{\sup}\big |\bar{X}_s^{\bY,u} - \bar{X}_q^{\bZ,u} \big |^2 \d u \Big] $ gives
\begin{align}\label{eq : inequality_before_gronwall}
    \E \Big[ \int_I\underset{s \in [t,r]}{\sup}\big |\bar{X}_s^{\bY,u} - \bar{X}_q^{\bZ,u} \big |^2 \d u \Big] \leq C_T \int_{t}^{r} \E \Big[ \int_I   \underset{ t \leq q \leq  l}{\sup} | Y_q^u - Z_q^u |^2 \d u \Big] \d l ,
\end{align}
where $C_T$ only depends on model coefficients, namely $ \underset{\omega \in  \Omega}{\text{ ess sup}} \big(\underset{s \in [t,T]}{\sup} \lVert T_{G_s^A}(\omega) \rVert_{\Lc(L^2(I;\R^d))}\big)$ and $\underset{(u,\omega) \in I \times \Omega}{\text{ ess sup }} \big( \underset{  s\in [t,T]}{\sup} |A^u_s(\omega )| \big)$.

Setting $r=T$ in \eqref{eq : inequality_before_gronwall} leads to
\begin{align}
    \lVert \Phi(\bY) - \Phi(\bZ)  \rVert_{\Sc^2_{\tilde{\F}^0}(I;\R^d)}^2 = \int_I \E \Big[ \underset{s \in [t,T]}{\sup} \big| \bar{X}_s^{\bY,u} - \bar{X}_q^{\bZ,u} \big | \Big]^2 \d u &\leq C_T (T-t)   \int_I \E \Big[\underset{s \in [t,T]}{\sup} |Y_s^u - Z_s^u |^2  \d u \Big] \d s, \\
    &= C_T(T-t)  \lVert \bY-\bZ \rVert^2_{\Sc^2_{\tilde{\F}^0}(I;\R^d)} 
\end{align}
we conclude, by standard arguments that $\Psi$ has a unique fixed-point on the Banach space $\Sc^2_{\tilde{\F}^0}(I;\R^d)$.

 Now, denoting by $\bX = (X^u)_{u \in I}$ the process corresponding to the unique fixed-point $\bar{\bX}$, then it is clearly a solution to \eqref{eq : state equations LQ common noise} which belongs to $\Sc_t$ (recalling \eqref{eq : integrability_condition_X_appendix} for the integrability condition and up to a modification for the continuity). Moreover, if $\tilde{\bX} \in \Sc_t$ is another solution, then $(\bar{\tilde{X}}^u)_{u \in I}$ is a fixed-point of $\Psi$ and therefore coincides with $\bar{\bX}$ and therefore $X^u = \tilde{X}^u$ for $a.e.$ $u \in I$ since they are both solutions to \eqref{eq : state equations LQ common noise}. 

Now, the representation in \eqref{eq : representation_X} follows from similar arguments as used in Step 2 in the proof of Theorem 2.1 in \cite{kharroubi2025stochastic}. Finally, the estimate \eqref{eq :estimate_X} follows from standard estimates on SDEs.
%\end{proof}
\section{Proof of Proposition \ref{prop : fundamental_relation_common_noise}}\label{appendix : fundamental_relation}

\begin{proof}
 For $a.e.$ $u \in I$, we define the  random field  $(\Vc_s(u))_{t \leq s \leq T}$ as
    \begin{align}\label{eq : def process}
    \Vc_s(u) := \langle X_s^u ,K_s^u X_s^u \rangle + \langle \bar{X}_s^u ,\int_I  \bar{K}_s(u,v) \bar{X}_s^v \d v  \rangle + 2\langle Y_s^u, X_s^u \rangle + \Lambda_s^u 
\end{align}
We note the following identities which will be useful in the remainder in the proofs.
\begin{small}
\begin{align}\label{eq : remark_computation}
    \begin{cases}
        \int_I  \E \Big[ \Vc_T(u)| \tilde{\Fc}_t^0 \Big] \d u   = \int_I \E \Big[ \langle X_T^u ,H^u X_T^u \rangle  + \langle  \bar{X}_T^u ,\int_I G^H(u,v) \bar{X}_T^v \d v \rangle  \big | \tilde{\Fc}_t^0 \Big] \d u, \\
        \int_I \E \Big[ \Vc_t(u) | \tilde{\Fc}_t^0 \Big] \d u =  \int_I \bigg(\E \Big[ \langle \xi^u, K_t^u \xi^u \rangle |\tilde{\Fc}_t^0 \Big] +  \langle \bar{\xi}^u , \int_I \bar{K}_t(u,v) \bar{\xi}^v \d v \rangle + 2 \langle Y_t^u , \bar{\xi}^u \rangle + \Lambda_t^u \bigg) \d u.
    \end{cases}
\end{align}
\end{small}
\begin{comment}For the sake of simplicity, we provide the following calculations in the case of $\beta^u=0$, $\gamma^u=0$, $\theta^u = 0$, $I^u=0,$ $G_I(u,v)=0$. First, we notice that in this setting $Y_t^u = 0$ and $\Lambda_t^u=0$.

\noindent \red{Kunita Wanabe Lemma ?  We are looking for processes defined as
\begin{align}
    \begin{cases}
    \d K_s^u  &=\dot{K_s^u} \d t +  Z_t^{K^u} \d W_t^0 \\
     \d \bar{K}_t^{u,v} &= \dot{\bar{K}_t}^{u,v} \d t + Z_t^{\bar{K}^{u,v}} \d W_t^0,\\
     \d Y_t^u &= \dot{Y}_t^u \d t + Z_t^{Y} \d W_t^0,
\end{cases}
\end{align}
}
\begin{align}
    \d \bar{X}_t^u  &=    \Big[A^u \bar{X}_t^u + \int_I G_A(u,v) \bar{X}_s^v \d v + B^u \bar{\alpha}_t^u  \Big] \d t  +  \Big[ E^u \bar{X}_t^u + \int_I G_E(u,v) \bar{X}_t^v \d v  + F^u \bar{\alpha}_t^u  \Big] \d B_t^0
\end{align}
\end{comment}
\noindent We introduce the process $\bigg(  S_s^{\alpha}(u) := \Vc_s(u)  + \int_{0}^s  \Big[   \langle X_l^u, Q_l^u X_l^u \rangle + \langle \bar{X}_l^u, \int_I G^Q_l(u,v) \bar{X}_l^v \d v \rangle +\langle \alpha_l^u + I_l^u, R_l^u( \alpha_l^u + I_l^u) \rangle \Big] \d l  \bigg)_{0 \leq s \leq T} $ and the following notations 
\begin{align}
  \begin{cases}
    b_t^u = \beta_t^u +A_t^u X_t^u + \int_I G_t^A(u,v) \bar{X}_t^v \d v + B^u \alpha_t^u, \quad
     \sigma_t^u :=  \gamma_t^u + C_t^u X_t^u + \int_I G_t^C(u,v) \bar{X}_t^v \d v + D^u \alpha_t^u, \\
    \sigma_t^{0,u} :=\theta_t^u, \quad
    \bar{b}_t^{0,u} := \beta_t^u + A_t^u \bar{X}_t^u + \int_I G_t^A(u,v) \bar{X}_t^v \d v + B^u \bar{\alpha}_t^u, \quad
     \bar{\sigma}_t^{0,u} :=  \theta_t^u.
  \end{cases}  
\end{align}
For a fixed $u \in I$, we apply Itô's formula to the process $\big(S_s^{\alpha}(u)\big)_{0 \leq s \leq T}$, we integrate in time over $[t,T]$ and we take the conditional expectation with respect to $\Fc_t^{u,0}$ such that the conditional expectations of the  stochastic integral terms vanish. Then conditioning again by the lower $\sigma$-algebra $\tilde{\Fc}_t^0$ gives
\begin{comment}
\begin{align}
\d  \bigg(\Vc_s(u)  + \int_{0}^s  \Big[   \langle X_l^u, Q_l^u X_l^u \rangle + \langle \bar{X}_l^u, \int_I G_l^Q(u,v) \bar{X}_l^v \d v \rangle +\langle \alpha_l^u + I_l^u, R_l^u (\alpha_l^u + I_l^u) \rangle \Big] \d l   \bigg) \notag \\
    &= \langle \d X_s^u, K_s^u X_s^u \rangle  + \langle X_s^u, \d (K_s^u X_s^u) \rangle + \Big(\langle \sigma_s^u, K_s^u \sigma_s^u \rangle \  + \langle \sigma_s^{0,u}, K_s^u \sigma_s^{0,u} \rangle + \langle \sigma_s^{0,u}, Z_s^{K^u} X_s^u \rangle \Big) \d s  \notag \\
    &\quad  + \int_I \Big[\langle \d \bar{X}_s^u, \bar{K}_s(u,v) \bar{X}_s^v \rangle + \langle \bar{X}_s^u, \d ( \bar{K}_s(u,v) \bar{X}_s^v )\rangle + \Big(\langle \bar{\sigma}_s^{0,u}, \bar{K}_s(u,v) \bar{\sigma}_s^{0,v} \rangle + \langle \bar{\sigma}_s^{0,u}, Z_s^{\bar{K}}(u,v) \bar{X}_s^v \rangle \Big) \d s  \Big]\d v   \notag \\
    &\quad +\langle \d Y_s^u, X_s^u \rangle + \langle Y_s^u , \d X_s^u \rangle  + \langle  Z_s^{Y}(u), \sigma_s^{0,u} \rangle \d s + d \Lambda_s^u \notag \\
    &\quad +  \Big(\langle X_s^u, Q_s^u X_s^u \rangle + \langle \bar{X}_s^u, \int_I G_s^Q(u,v) \bar{X}_s^v \rangle + \langle \alpha_s^u + I_s^u, R_s^u (\alpha_s^u + I_s^u) \rangle \Big) \d s  \notag  \\
    \Sigma_s^{u,\alpha} \d W_s^u + \Sigma_s^{0,u,\alpha} \d \red{\tilde B}_s^0, 
    \end{align}
\end{comment}
\begin{align}\label{eq : Ito_formula_S_process}
    \E \big[\Sc_T^{\alpha}(u) |\tilde{\Fc}_t^0 \big] - \E \big[S_t^{\alpha}(u) | \tilde{\Fc}_t^0 \big] = \E \big[ \int_{t}^{T} D_s^{u,\alpha} \d s |\tilde{\Fc}_t^0 \big],
\end{align}
where we define 
\begin{small}
\begin{align}\label{eq : drif_term}
    D_s^{u,\alpha} &:= \langle b_s^u , K_s^u X_s^u \rangle  + \langle X_s^u, K_s^u b_s^u \rangle + \langle X_s^u, \dot{K}_s^u X_s^u \rangle + \langle X_s^u,Z_s^{K^u} \sigma_s^{0,u} \rangle + \langle \sigma_s^u, K_s^u \sigma_s^u \rangle \  + \langle \sigma_s^{0,u}, K_s^u \sigma_s^{0,u} \rangle + \langle \sigma_s^{0,u}, Z_s^{K^u} X_s^u \rangle  \notag \\
    &\quad + \int_I \Big[ \langle \bar{b}_s^u, \bar{K}_s(u,v) \bar{X}_s^v \rangle + \langle \bar{X}_s^u, \dot{\bar{K}}_s(u,v) \bar{X}_s^v \rangle +\langle \bar{X}_s^u, \bar{K}_s(u,v) \bar{b}_s^v \rangle + \langle \bar{X}_s^u, Z_s^{\bar{K}}(u,v) \bar{\sigma}_s^{0,v} \rangle \notag \\
    &\quad + \qquad  \langle \bar{\sigma}_s^{0,u}, \bar{K}_s(u,v) \bar{\sigma}_s^{0,v} \rangle+ \langle \bar{\sigma}_s^{0,u}, Z_s^{\bar{K}}(u,v) \bar{X}_s^v \rangle \Big] \d v \notag \\
    &\quad + \langle \dot{Y}_s^u, X_s^u \rangle + \langle Y_s^u , b_s^u \rangle + \langle Z_s^{Y}(u), \sigma_s^{0,u} \rangle  + \dot{\Lambda}_t^u  + \langle X_s^u, Q_s^u X_s^u \rangle + \langle \bar{X}_s^u, \int_I G_s^Q(u,v) \bar{X}_s^v \rangle + \langle \alpha_s^u + J_s^u, R_s^u (\alpha_s^u + J_s^u) \rangle \notag \\
    &=\langle X_s^u, \big(\dot{K}_s^u + \Phi_s^u\big) X_s^u \rangle   + \int_I  \Big[ \langle \bar{X}_s^u, \big( \dot{\bar{K}}_s(u,v) + \Psi_s(u,v) \Big) \bar{X}_s^v \Big] \d v + 2 \langle X_s^u, \dot{Y}_s(u) + M_s(u) \rangle + \dot{\Lambda}_s^u  + L_s^u \notag \\
    &\quad +  2  \langle U_s^u X_s^u + \int_I V_s(u,v) \bar{X}_s^v \d v  + \Gamma_s^u   , \alpha_s^u \rangle + \langle \alpha_s^u, O_s^u \alpha_s^u \rangle,
\end{align}
\end{small}
where we recall the notations introduced in
\eqref{eq : conditional_expectation_Xu},
\eqref{eq : notations_common_noise_Kt_no_control_vol}, \eqref{eq : notations_barKt_common_noise_no_control_vol}, \eqref{eq : notations_common_noise_Yt_no_control_vol} and \eqref{eq : coeff_Lambda} and where we introduced  $\dot{K}_s^u, \dot{\bar{K}}_s(u,v), \dot{Y}_s(u)$ and $\dot{\Lambda}_s^u$ as the $\d t $ part respectively in \eqref{eq : standardRiccati K}, \eqref{eq : abstract Riccati barK},\eqref{eq : linear_Riccati_Y_common_noise} and \eqref{eq : RiccatiLambda}. Notice that if the coefficients are deterministic as in Remark \ref{rmk : system_backward_equations} (4), then there is no need to introduce BSDE.

Denoting $\chi_t(\alpha_t^u) = 2  \langle U_t^u X_t^u + \int_I V_t(u,v) \bar{X}_t^v \d v  + \Gamma_t^u   , \alpha_t^u \rangle + \langle \alpha_t^u, O_t^u \alpha_t^u \rangle$,  after doing a square completion, we have
\begin{small}
\begin{align}\label{eq :  square_completion_chi_common_noise}
     \chi_t(\alpha_t^u) &=  \Big \langle O_t^u \Big(\alpha_t^u +  (O_t^u)^{-1} \big( U_t^u X_t^u + \int_I V_t(u,v) \bar{X}_t^v \d v  + \Gamma_t^u )\Big) , \alpha_t^u +  (O_t^u)^{-1} \big( U_t^u X_t^u + \int_I V_t(u,v) \bar{X}_t^v \d v  + \Gamma_t^u  \big) \Big \rangle  \notag \\
    &\qquad - \big \langle U_t^u X_t^u , (O_t^u)^{-1} U_t^u X_t^u \big \rangle - \int_I \big \langle U_t^u X_t^u, (O_t^u)^{-1} V_t(u,v) \bar{X}_t^v \big \rangle \d v - \langle U_t^u X_t^u, (O_t^u)^{-1} \Gamma_t^u \rangle \rangle  \notag \\
    &\qquad - \int_I  \big \langle V_t(u,v) \bar{X}_t^v,(O_t^u)^{-1} U_t^u \bar{X}_t^u \big \rangle \d v  - \int_I \langle \bar{X}_t^u,   \int_I V_t(w,u)^{\top} (O_t^w)^{-1} V_t(w,v) \bar{X}_t^v \d w \rangle \d v - \int_I \langle \bar{X}_t^u, V_t(v,u)^{\top} (O_t^v)^{-1} \Gamma_t^v \rangle \d v \notag \\
    & \qquad - \langle (U_t^u)^{\top} (O_t^u)^{-1} \Gamma_t^u, X_t^u \rangle - \langle \int_I  V_t(v,u)^{\top} (O_t^v)^{-1}\Gamma_t^v \d v , X_t^u \rangle  -  \langle \Gamma_t^u, (O_t^u)^{-1} \Gamma_t^u \rangle 
\end{align}
\end{small}
\begin{comment}
Now, we rewrite the previous equation based on the three following observations 
\begin{enumerate}
    \item [$\bullet$] The matrix transposition equality implying  that $\langle UX,VX \rangle = \langle X, U^{\top}V X \rangle$,
    \item [$\bullet$] For any $\tilde{\F}^0$-adapted process $\psi := (\psi_t)_{t \in [0,T]})$,  we have $\E \big[ \psi_t^{\top} \bar{\alpha}_t^u \big] = \E \big[ \psi_t^{\top} \alpha_t^u \big]$. \red{WE NEED TO CHANGE THE NAME TO $\psi$ HERE AS IT IS NOT THE SAME TO THE ONE THAT IS USED IN THE SQUARE COMPLETION LATER AND IN THE STATEMENT}
    \item [$\bullet$] From Fubini's theorem and matrix transposition, we can write for any measurable functions $I \times I \ni(u,v) \mapsto   \phi(u,v),\psi(u,v) $
    \begin{align}
        \int_I \int_I \int_I  \langle  \phi(u,v) \bar{X}_t^v, \psi(u,w) \bar{X}_t^w \rangle \d w \d v \d u  = \int_I \int_I \int_I \langle \bar{X}_t^u, \phi(w,u)^{\top} \psi(w,v) \bar{X}_t^v \rangle \d w \d v \d u.
    \end{align}
\end{enumerate}
\end{comment}
\noindent Plugging \eqref{eq :  square_completion_chi_common_noise}  into \eqref{eq : drif_term} gives
\begin{small}
\begin{align}\label{eq : drift_term_square_completion}
    D_s^{u,\alpha} &= \langle X_s^u , (\dot{K}_s^u + \Phi_s(u) - (U_s^u)^{\top} (O_s^u)^{-1} U_s^u) X_s^u \rangle \notag \\
    &\quad +\int_I \Big[ \langle\bar{X}_s^u, \big( \dot{\bar{K}}_s(u,v) + \Psi_s(u,v) - (U_s^u)^{\top} (O_s^u)^{-1} V_s(u,v)  - V_s(v,u)^{\top} (O_s^v)^{-1}  U_s^v \big) \bar{X}_s^v \rangle \Big] \d v  \notag \\
    &\quad +  2 \langle X_s^u, \dot{Y}_s(u) + M_s(u) - (U_s^u)^{\top} (O_s^u)^{-1} \Gamma_s^u - \int_I V_s(v,u)^{\top} (O_s^v)^{-1} \Gamma_s^v \d v  \rangle \notag \\
    &\quad + \dot{\Lambda_s^u} + L_s^u - \langle \Gamma_s^u, (O_s^u)^{-1} \Gamma_s^u \rangle  \notag \\
    &\quad + \Big \langle O_s^u \Big(\alpha_s^u +  (O_s^u)^{-1} \big( U_s^u X_s^u + \int_I V_s(u,v) \bar{X}_s^v \d v  + \Gamma_s^u )\Big) , \alpha_s^u +  (O_s^u)^{-1} \big( U_s^u X_s^u + \int_I V_s(u,v) \bar{X}_s^v \d v  + \Gamma_s^u  \big) \Big \rangle.
\end{align}
\end{small}
Using now \eqref{eq : remark_computation}, \eqref{eq : drift_term_square_completion} and  since $(K,\bar{K},Y,\Lambda)$ are respectively solutions to \eqref{eq : standardRiccati K}, \eqref{eq : abstract Riccati barK}, \eqref{eq : linear_Riccati_Y_common_noise} and \eqref{eq : RiccatiLambda}, we get recalling \eqref{eq : Ito_formula_S_process}  and integrating over $I$
\begin{small}
\begin{align}
     &\int_t^T  \int_I\E\Big[\Big\langle O^u_s\Big(\alpha^u_s + (O^u_s)^{-1}\big(U^u_sX^u_s + \int_I V_s(u,v)\bar X^v_s \d v + \Gamma_s^u \big)\Big),    \alpha^u_s + (O^u_s)^{-1}\big(U^u_sX^u_s + \int_I V_s(u,v)\bar X^v_s \d v + \Gamma_s^u  \big)\Big\rangle \big | \tilde{\Fc}_t^0 \Big]\d s  \\
    &=\int_I \E \Big[ \Vc_T(u) | \tilde{\Fc}_t^0 \Big] \d u - \int_I \E \Big[ \Vc_t(u) | \tilde{\Fc}_t^0 \Big] \d u + \int_I \E \Big[\int_{t}^{T} \Big[   \langle X_s^u, Q_s^u X_s^u \rangle + \langle \bar{X}_s^u, \int_I G_s^Q(u,v) \bar{X}_s^v \d v \rangle +\langle \alpha_s^u, R_s^u \alpha_s^u \rangle  \d s \big | \tilde{\Fc}_t^0 \Big] \d u  \\
    &= J(t,\boldsymbol{\xi},\balpha) - \Vc(t,\boldsymbol{\xi}),
\end{align}
\end{small}
where we used that $\int_I \E \Big[ \Vc_t(u) | \tilde{\Fc}_t^0 \Big] \d u = \Vc(t,\boldsymbol{\xi})$ and where all the equalities hold $\P \text{-a.s}$.  The proof is therefore completed.
\end{proof}
\begin{small}
\paragraph*{Acknowledgements.}
The authors are grateful to Huyên Pham and the two anonymous
referees for their careful reading of the paper and useful remarks.
\end{small}

\renewcommand{\thesection}{\Alph{section}}

\begin{small}
\bibliographystyle{plain}   % or another style
\bibliography{ReferencesTest.bib}   % WITHOUT .bib extension
\end{small}

\end{document}